\documentclass[11pt,a4paper,reqno]{amsart}
\usepackage[T1]{fontenc}
\usepackage{lmodern}
\usepackage[a4paper,margin=28mm]{geometry}
\usepackage{amsmath,amssymb,amsthm,mathtools}
\usepackage[mathscr]{euscript}
\usepackage{microtype}
\usepackage{enumitem}
\usepackage{needspace}
\usepackage{url}
\usepackage{xcolor}
\usepackage[colorlinks=true,linkcolor=blue,citecolor=blue,urlcolor=blue]{hyperref}
\usepackage{aliascnt}
\usepackage[nameinlink,noabbrev]{cleveref}

\newtheorem{maintheorem}{Theorem}

\crefname{maintheorem}{theorem}{theorems}
\Crefname{maintheorem}{Theorem}{Theorems}
\newaliascnt{maincorollary}{maintheorem}
\newtheorem{maincorollary}[maincorollary]{Corollary}
\aliascntresetthe{maincorollary}
\crefname{maincorollary}{corollary}{corollaries}
\Crefname{maincorollary}{Corollary}{Corollaries}
\newtheorem{theorem}{Theorem}[section]
\newaliascnt{lemma}{theorem}
\newtheorem{lemma}[lemma]{Lemma}
\aliascntresetthe{lemma}
\crefname{lemma}{lemma}{lemmas}
\Crefname{lemma}{Lemma}{Lemmas}
\newaliascnt{proposition}{theorem}
\newtheorem{proposition}[proposition]{Proposition}
\aliascntresetthe{proposition}
\crefname{proposition}{proposition}{propositions}
\Crefname{proposition}{Proposition}{Propositions}
\newaliascnt{corollary}{theorem}
\newtheorem{corollary}[corollary]{Corollary}
\aliascntresetthe{corollary}
\crefname{corollary}{corollary}{corollaries}
\Crefname{corollary}{Corollary}{Corollaries}
\theoremstyle{definition}
\newaliascnt{remark}{theorem}
\newtheorem{remark}[remark]{Remark}
\aliascntresetthe{remark}
\crefname{remark}{remark}{remarks}
\Crefname{remark}{Remark}{Remarks}

\newlist{resultparts}{enumerate}{1}
\setlist[resultparts]{label=\textup{(\roman*)},ref=(\roman*),
leftmargin=2em,topsep=3pt,itemsep=2pt,parsep=0pt}

\newenvironment{AIstatement}
  {\par\medskip\noindent\textbf{AI statement.}\ \ignorespaces}
  {\par}
\newenvironment{acknowledgements}
  {\par\medskip\noindent\textbf{Acknowledgements.}\ \ignorespaces}
  {\par}

\newcommand{\N}{\mathbb N}
\newcommand{\R}{\mathbb R}
\newcommand{\C}{\mathbb C}
\newcommand{\T}{\mathbb T}
\newcommand{\eps}{\varepsilon}
\newcommand{\norm}[1]{\lVert #1\rVert}

\DeclareMathOperator{\tr}{tr}
\DeclareMathOperator{\rank}{rank}
\DeclareMathOperator{\diag}{diag}
\DeclareMathOperator{\ran}{ran}

\numberwithin{equation}{section}
\allowdisplaybreaks[1]
\hypersetup{pdftitle={A negative solution to the complemented subspace problem for Banach spaces with unconditional bases},
pdfauthor={Antonio Acuaviva}, pdfsubject={Circle tensor spaces and complemented subspaces of Banach lattices}, pdfkeywords={unconditional basis, complemented subspace, Banach lattice, local unconditional structure, superreflexive space}}

\title[The complemented subspace problem]{A negative solution to the complemented subspace problem for Banach spaces with unconditional bases}
\author[A.~Acuaviva]{Antonio Acuaviva}
\address{(A.~Acuaviva) School of Mathematical Sciences, Charles Carter Building,
Lancaster University, Lancaster LA1 4YX, United Kingdom}
\email{ahacua@gmail.com}
\date{5 September 2026}
\subjclass[2020]{Primary 46B15, 46B42; Secondary 46B07, 46B09, 46B20}
\keywords{unconditional basis, complemented subspace, Banach lattice, local unconditional structure, superreflexive space}

\begin{document}
\begin{abstract}
We give a negative solution to the complemented subspace problem for Banach spaces with unconditional bases over both the real and complex fields. For every $\rho>0$, we construct a projection $P_\rho$ of norm less than $1+\rho$ on a separable superreflexive space
\begin{equation*}
 X_\rho=\left(\bigoplus_{j=1}^{\infty}\ell_{p_j}^{N_j}\right)_2,
 \qquad p_j\downarrow2,
\end{equation*}
such that $Z_\rho=P_\rho(X_\rho)$ and its dual $Z_\rho^*$ have Schauder bases but admit no unconditional bases. Over the real field, both spaces have Gordon--Lewis local unconditional structure (GL-lust) but fail Dubinsky--Pe\l czy\'nski--Rosenthal local unconditional structure (DPR-lust), disproving a conjecture of Figiel, Johnson and Tzafriri. In particular, neither is isomorphic to a Banach lattice, giving a negative solution to the separable Banach-lattice complemented subspace problem. A modification of the construction also shows that the class of separable real Banach lattices is not primary. A Lean~4 formalisation of the main results accompanies the paper.
\end{abstract}
\maketitle

\begin{center}
\emph{Companion Lean~4 formalisation:}
\url{https://banach-complemented-subspaces.github.io/}
\end{center}

\tableofcontents

\section{Introduction}\label{sec:main}

The \emph{complemented subspace problem} (CSP) asks whether every complemented subspace of a space in a given class is isomorphic to a space in the same class. Classical instances include Pe\l czy\'nski's questions for $L_1$-spaces and $C(K)$-spaces, posed in 1960 \cite[p.~210]{Pelczynski1960}.

The question for spaces with an unconditional basis, traditionally attributed to Banach \cite{DeHeviaEtAl2025}, is also recorded as Problem~1.d.4 in Lindenstrauss and Tzafriri's monograph \cite[p.~27]{LindenstraussTzafriri1977}. One approach is to study whether an unconditional basis of a direct sum can be partitioned into subsequences spanning copies of its summands. Such splitting results were obtained, under additional hypotheses, by Edelstein and Wojtaszczyk \cite{EdelsteinWojtaszczyk1976} and later by Wojtaszczyk \cite{Wojtaszczyk1978}.

The corresponding problem for Banach lattices is closely related: both $L_1$-spaces and $C(K)$-spaces are Banach lattices, and a real space with an unconditional basis can be equivalently renormed as a Banach lattice under the coordinate order.

Plebanek and Salguero-Alarc\'on \cite{PlebanekSalguero2023} gave a negative answer to Pe\l czy\'nski's $C(K)$-space question. They constructed a $1$-complemented subspace $\mathrm{PS}_2$ of a $C(K)$-space which is not isomorphic to any $C(K)$-space. De Hevia, Mart\'inez-Cervantes, Salguero-Alarc\'on and Tradacete \cite{DeHeviaEtAl2025} subsequently proved that $\mathrm{PS}_2$ is not isomorphic to any Banach lattice, giving a negative solution to the Banach-lattice problem as well.

The space $\mathrm{PS}_2$ is nonseparable, so these results do not settle the problem for separable Banach lattices. For a fuller account of the CSP and its history, we refer to the survey of de Hevia and Tradacete~\cite{DeHeviaTradacete2025}.

We give a negative solution to the CSP for unconditional bases over both the real and complex fields, and for separable real Banach lattices. The examples are complemented subspaces of superreflexive spaces with a $1$-unconditional basis, and the projections have norms arbitrarily close to one. Closely related mixed sums of finite-dimensional $\ell_p$-spaces were used by Figiel in his classical construction of a reflexive Banach space not isomorphic to its Cartesian square \cite{Figiel1972}. Our conclusions follow from separating two notions of local unconditional structure.

Roughly, the Dubinsky--Pe\l czy\'nski--Rosenthal version (DPR-lust) \cite{DubinskyPelczynskiRosenthal1972} asks that each finite-dimensional subspace lie in a larger finite-dimensional subspace with a uniformly controlled unconditional basis. The Gordon--Lewis version (GL-lust) \cite{GordonLewis1974} instead uses uniformly bounded factorisations through spaces with unconditional bases, which need not be subspaces of the given space. Thus DPR-lust implies GL-lust, but whether the converse holds was unknown \cite{DeHeviaTradacete2025}; Figiel, Johnson and Tzafriri already emphasised the distinction between the two notions \cite[Remark~2.3]{FigielJohnsonTzafriri1975}.

Every Banach lattice has DPR-lust, while every complemented subspace of a Banach lattice has GL-lust. The main conjecture of Figiel, Johnson and Tzafriri \cite[p.~396]{FigielJohnsonTzafriri1975} states that such complemented subspaces also have DPR-lust. The following theorem disproves this conjecture and, in particular, separates GL-lust from DPR-lust in separable superreflexive spaces.

\begin{samepage}
\begin{maintheorem}\label{thm:real-consequences}
For every $\rho>0$, there exist sequences $(N_j)_{j=1}^{\infty}$ in $\N$ and $(p_j)_{j=1}^{\infty}$ in $(2,3]$, with $p_j\downarrow2$, and a projection $P_\rho$ on the separable superreflexive real Banach lattice
\begin{equation*}
 X_\rho=\left(\bigoplus_{j=1}^{\infty}\ell_{p_j}^{N_j}(\R)\right)_2
\end{equation*}
such that, with $Z_\rho=P_\rho(X_\rho)$, we have:
\begin{resultparts}
\item\label{it:main-separation}
For $V\in\{Z_\rho,Z_\rho^*\}$,
\begin{equation*}
 \chi_{\mathrm{GL}}(V)\leq\norm{P_\rho}<1+\rho,
 \qquad \chi_{\mathrm{DPR}}(V)=\infty.
\end{equation*}
\item\label{it:main-complement}
The projection $P_\rho$ can be chosen so that $\norm{I_{X_\rho}-P_\rho}<1+\rho$ and \ref{it:main-separation} also holds for $(I_{X_\rho}-P_\rho)(X_\rho)$ and its dual, with $P_\rho$ replaced by $I_{X_\rho}-P_\rho$.
\end{resultparts}
\end{maintheorem}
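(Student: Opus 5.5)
The plan is to reduce both parts to a blockwise construction. In each block $\ell_{p_j}^{N_j}$ I would take a \emph{real orthogonal} projection $P_j$, that is, orthogonal as an operator on $\ell_2^{N_j}$, onto a carefully chosen subspace $E_j$. Then I would set $P_\rho=\bigoplus_j P_j$ on $X_\rho$. Since the outer norm is $\ell_2$, this gives $\norm{P_\rho}=\sup_j\norm{P_j}_{\ell_{p_j}^{N_j}\to\ell_{p_j}^{N_j}}$.

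\textbf{Norm control.} On $\ell_2^{N_j}$ we have $\norm{P_j}=1$, and trivially $\norm{P_j}_{\infty\to\infty}\leq N_j$. Write $1/p_j=(1-\theta_j)/2$. Riesz--Thorin interpolation between the exponents $2$ and $\infty$ then gives
\begin{equation*}
\norm{P_j}_{p_j\to p_j}\leq N_j^{\theta_j}.
\end{equation*}
For real matrices acting on the same exponent this holds with constant $1$ by complexification. So once $N_j$ is fixed, I choose $p_j\in(2,3]$ close enough to $2$ that $N_j^{\theta_j}<1+\rho$, and decreasing to $2$. The identical bound applies to $I-P_j$, which is again an orthogonal projection; this handles the norm claim in \ref{it:main-complement}.

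\textbf{GL-lust.} The upper bound on $\chi_{\mathrm{GL}}$ is the standard one. The identity of $Z_\rho$ factors as $P_\rho\circ\iota$ through $X_\rho$, and $X_\rho$ has a $1$-unconditional basis. Hence every operator from $Z_\rho$ into $\ell_2$-type targets factors accordingly, and $\chi_{\mathrm{GL}}(Z_\rho)\leq\norm{P_\rho}$. For the dual, I use $X_\rho^*=(\bigoplus\ell_{q_j}^{N_j})_2$, in which $Z_\rho^*$ is complemented by $P_\rho^*$ with $\norm{P_\rho^*}=\norm{P_\rho}$. This also gives superreflexivity, since $2<p_j\leq3$ and the space is a $2$-sum. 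Because the orthogonal projections are self-adjoint, the dual picture is of the same type with $q_j\uparrow2$.

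\textbf{DPR-lust: the main obstacle.} I would aim for a finite-dimensional obstruction that blows up with $j$. Concretely, suppose $F\supseteq E_j$ is a finite-dimensional subspace of $Z_\rho$ with an unconditional basis of constant $C$. I want to show $C\geq c\,\phi(j)$ with $\phi(j)\to\infty$.

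For $E_j$ I would use a ``circle tensor'' subspace of $\ell_2(\mathbb Z_n\times\mathbb Z_n)$ with $N_j=n^2$, spanned by suitably twisted characters. The point of this choice is that any unconditional structure around $E_j$, compressed back through $P_j$, produces a bounded Schur multiplier. That multiplier is of triangular-truncation type, with norm of order $\log n$ on the relevant $p$-scale.

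The mechanism I plan to use runs as follows.
\begin{itemize}
\item An unconditional basis of $F$ yields a Banach lattice structure on $F$ with constant $C$.
\item Because $F$ is $(1+\rho)$-complemented in a space that is nearly $2$-convex and $2$-concave at level $j$, the Krivine and Maurey factorisation gives a change of density. This almost diagonalises $F$'s lattice against the coordinate lattice of $\ell_{p_j}^{N_j}$.
\item Averaging over the translation group of $\mathbb Z_n\times\mathbb Z_n$, in the Gordon--Lewis style, converts $C$ into a bound on a translation-invariant multiplier preserving $E_j$.
\end{itemize}
The crux is that the deviation $N_j^{1/2-1/p_j}$ must stay bounded, so that $\norm{P_j}<1+\rho$. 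At the same time the multiplier must still see a factor like $(\log n)^{\alpha}$ for some $\alpha>0$. This requires choosing $p_j$ with $n^{2\theta_j}$ bounded but $\theta_j\log n$ not tending to $0$ too fast.

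This balancing is where I expect the real difficulty. The first thing I would try is to pick $\theta_j\log n_j$ of order $\log(1+\rho)$. Then I would check that the triangular-truncation lower bound survives at that level with a constant depending only on $\rho$. Summing over infinitely many blocks, each carrying a growing obstruction, then gives $\chi_{\mathrm{DPR}}=\infty$.

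\textbf{Complement and duals.} For \ref{it:main-complement}, I would make the construction symmetric. I would choose $E_j$ so that $E_j^{\perp}$ is the image of $E_j$ under a coordinate permutation combined with sign changes, which are isometries of $\ell_{p_j}^{N_j}$. The whole argument then applies verbatim to $I-P_\rho$. For the duals, the same argument applies with $q_j$ in place of $p_j$.
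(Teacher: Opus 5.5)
Your proposal has a genuine gap: the way you bound $\norm{P_j}$ rules out $\chi_{\mathrm{DPR}}=\infty$ from the start. You obtain $\norm{P_j}<1+\rho$ from $N_j^{1-2/p_j}<1+\rho$. But on $\R^{N_j}$ one has $\norm{x}_{p_j}\leq\norm{x}_2\leq N_j^{1/2-1/p_j}\norm{x}_{p_j}$. So the rescaled Euclidean norms make every block, and hence the $\ell_2$-sum $X_\rho$, $(1+\rho)^{1/2}$-Hilbertian, uniformly in $j$. Every subspace inherits this. If $|\cdot|$ is such a Hilbert norm on $Z_\rho$, then a $|\cdot|$-orthonormal basis of any finite-dimensional $V\subseteq Z_\rho$ is $(1+\rho)^{1/2}$-unconditional for the original norm. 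Hence $\chi_{\mathrm{DPR}}(Z_\rho)\leq(1+\rho)^{1/2}$, and likewise for $Z_\rho^*$. The balancing you describe, with $\theta_j\log n_j$ of order $\log(1+\rho)$, is exactly this regime. No multiplier or triangular-truncation quantity can grow with $j$ when every block is uniformly close to $\ell_2$. Any construction whose projection bound comes from closeness of the whole block to Hilbert space fails in the same way.

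The missing idea is a projection whose norm tends to one for an intrinsic, second-order reason, while its range moves far from Hilbert space. In the paper, $E(n,p)\subseteq L_p(\T^n)$ is spanned by products of first harmonics in $n$ variables. The tensorised first-harmonic projection has norm at most $c_p^n$, where $c_p=\alpha_p\alpha_{p'}$. The first-order terms of $\log\alpha_p$ and $\log\alpha_{p'}$ cancel at $p=2$, so $\log c_p=O((p-2)^2)$. Taking $p_n=2+n^{-3/4}$ gives $c_{p_n}^n\to1$, while the Hilbertian distortion $\alpha_{p_n}^n$ of $E_n$ tends to infinity, by \eqref{eq:parameter-limits}. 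The ambient blocks, with $N_j=(1000n_j^2)^{n_j}$, are correspondingly far from $\ell_2$.

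Even granting such a projection, your DPR mechanism misses the real difficulty. A superspace $F\supseteq E_j$ may spread over many other blocks, so averaging over translations of a single block does not act on $F$. The paper handles this as follows:
\begin{itemize}
\item the finite selection principle of \Cref{lem:finite-selection} extracts at most $3q_j$ basis vectors that retain a large trace on $E_j$;
\item this uses the trace estimate of \Cref{lem:trace-estimate};
\item the remaining summands are controlled by the local Hilbert approximation of \Cref{lem:local-hilbert}, under the recursive conditions \eqref{eq:recursive-separation};
\item the contradiction comes from the fourth-moment overlap bound of \Cref{thm:overlap}.
\end{itemize}

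For part (ii) you do not need a symmetric $E_j$. Stern's estimate gives $\norm{I-R_n}\leq2^{1-2/p_n}\norm{R_n}$. The paper then alternates $R_{n_j}$ and $I-R_{n_j}$, enlarging $D_j$ to include $N_i^{1/2-1/p_i}$ for $i<j$.
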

\end{samepage}

It follows that DPR-lust is not inherited by complemented subspaces. Since every Banach lattice has DPR-lust and this property is invariant under isomorphism, neither $Z_\rho$ nor $Z_\rho^*$ is isomorphic to a Banach lattice. In particular, neither space admits an unconditional basis. Together with the complex construction, this gives the following negative solutions to the CSP for unconditional bases and for separable real Banach lattices.

\Needspace{15\baselineskip}
\begin{samepage}
\begin{maintheorem}\label{thm:main}
For $\mathbb K\in\{\R,\C\}$ and every $\rho>0$, there are a Banach space $X_\rho$ over $\mathbb K$ with a $1$-unconditional basis and a projection $P_\rho$ on $X_\rho$ with $\norm{P_\rho}<1+\rho$ such that, writing $Z_\rho=P_\rho(X_\rho)$, neither $Z_\rho$ nor $Z_\rho^*$ admits an unconditional basis. If $\mathbb K=\R$, neither space is isomorphic to a Banach lattice.
\end{maintheorem}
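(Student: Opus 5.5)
The plan is to treat the real case as a corollary of \Cref{thm:real-consequences} and then deal with the complex field separately.

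\emph{Real case.} Fix $\rho>0$ and take $X_\rho$, $P_\rho$ and $Z_\rho=P_\rho(X_\rho)$ from \Cref{thm:real-consequences}. The canonical unit vector basis of $X_\rho=(\bigoplus_j\ell_{p_j}^{N_j}(\R))_2$, ordered block by block, is $1$-unconditional, and $\norm{P_\rho}<1+\rho$. By \Cref{thm:real-consequences}\ref{it:main-separation}, $\chi_{\mathrm{DPR}}(V)=\infty$ for $V\in\{Z_\rho,Z_\rho^*\}$.

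Next I use the standard facts recalled in the introduction.
\begin{itemize}
\item Every Banach lattice $L$ has DPR-lust with a finite constant. Each finite-dimensional subspace of $L$ sits, up to a small perturbation, inside the span of finitely many disjoint elements, and that span is $1$-unconditional.
\item $\chi_{\mathrm{DPR}}$ changes at most by a factor $\norm{T}\norm{T^{-1}}$ under an isomorphism $T$, so finiteness is an isomorphic invariant.
\end{itemize}
Hence neither $Z_\rho$ nor $Z_\rho^*$ is isomorphic to a Banach lattice.

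A real space with an unconditional basis $(e_n)$ is a Banach lattice under the coordinate order once it is renormed by
\begin{equation*}
\norm{x}'=\sup_{|\eps_n|\le1}\Bigl\lVert\sum\eps_nx_ne_n\Bigr\rVert.
\end{equation*}
So neither space has an unconditional basis.

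\emph{Complex case.} Here the lattice argument is unavailable, so I argue through DPR-lust directly. If a space has an unconditional basis with constant $K$, then the spans of its initial segments are $K$-unconditional. A finite-dimensional subspace $F$ can be moved by a small perturbation into one of these spans. This gives $\chi_{\mathrm{DPR}}\le CK<\infty$, and the argument works over either field.

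It therefore suffices to produce a complex analogue: $X_\rho=(\bigoplus_j\ell_{p_j}^{N_j}(\C))_2$ and a projection $P_\rho$ of norm $<1+\rho$ such that $\chi_{\mathrm{DPR}}(Z_\rho)=\chi_{\mathrm{DPR}}(Z_\rho^*)=\infty$. I would obtain this by running the same construction that underlies \Cref{thm:real-consequences} over $\C$. The projection and its norm estimate come from the same interpolation/averaging bounds, which hold over $\C$ as well, since the Riesz--Thorin theorem is even more natural in the complex setting.

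\emph{Main obstacle.} The key step is the lower bound $\chi_{\mathrm{DPR}}=\infty$ over $\C$. The real argument may use sign-unconditionality or lattice structure. Over $\C$ one must allow unimodular multipliers, and these are equivalent to signs up to a factor of $2$. So I expect any real lower-bound argument to transfer with a loss of at most an absolute constant. Because the bound is infinite, that loss is harmless.

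Finally, duality is handled by symmetry. $Z_\rho^*$ is identified with $P_\rho^*(X_\rho^*)$. Here $X_\rho^*=(\bigoplus_j\ell_{p_j'}^{N_j})_2$, and the same lower-bound argument applies to the conjugate exponents.
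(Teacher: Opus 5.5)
Your real case is correct and is the paper's own deduction. DPR-lust is an isomorphic invariant enjoyed by every real Banach lattice and by every space with an unconditional basis, so $\chi_{\mathrm{DPR}}(Z_\rho)=\chi_{\mathrm{DPR}}(Z_\rho^*)=\infty$ from \Cref{thm:real-consequences} excludes both structures.

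The gap is the complex case. The paper does carry out its construction over both fields, so your plan points the right way. But \Cref{thm:real-consequences} is stated only over $\R$, so the complex half is exactly what remains to be proved, and you only assert that it transfers. The reasons you give for the transfer are also not the operative ones.

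\begin{itemize}
\item The projection bound cannot come from Riesz--Thorin. Interpolating between $L_2$, where the tensor projection $P_n$ is orthogonal, and a fixed $L_r$ gives only a bound on $\log\norm{P_n}_{p_n\to p_n}$ of order $n(p_n-2)$. That bound is useless in precisely the regime $n(p_n-2)=n^{1/4}\to\infty$ needed for $\alpha_{p_n}^n\to\infty$. What actually gives $\norm{P_n}\leq c_{p_n}^n\to1$ is the cancellation of the first-order terms of $\log c_p=\log\alpha_p+\log\alpha_{p'}$ at $p=2$. This yields $n\log c_{p_n}=O(n(p_n-2)^2)=O(n^{-1/2})$.
\item Nor is the crux of the lower bound the passage from signs to unimodular multipliers. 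That costs at most a constant, and the definition of $C$-unconditionality already allows all multipliers with $|\theta_i|\leq1$. The crux is the balance in $\Lambda_n=\alpha_{p_n}^n\tau^{-n(p_n-2)/(2p_n)}$: the derivative of $\log\alpha_r$ at $r=2$ must exceed $\tfrac14\log\tau$, where $\tau$ is the fourth-moment constant from King's multiplicativity theorem. This is a model-dependent numerical inequality, with margin only $\kappa\approx0.02$ in \eqref{eq:positive-exponent}, and it must be established over $\C$.
\end{itemize}

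Over $\C$ the paper makes the following changes, none of which appears in your proposal:
\begin{itemize}
\item It takes $w$ uniformly distributed on the sphere of radius $\sqrt2$ in $\C^2$, so that the unitary invariance used in the proof of \Cref{lem:packet-estimates} holds.
\item It recomputes King's bound: the eigenvalues of $\mathcal D(A)$ now lie in $[1/3,2/3]$, so $\tau=10/9$.
\item It checks that $\kappa=(\log(9/5)-1/2)/4>0$.
\item It replaces exact circle-grid sampling by conditional expectations with a corrected approximate inverse (\Cref{lem:complex-sampling}).
\item In \Cref{lem:local-hilbert}, it converts the real Hilbert norm from Passer's theorem into a complex one by averaging $|x|_0^2$ and $|ix|_0^2$.
\end{itemize}
Without these, the complex case is not proved.

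Your duality step also fails as written. The obstruction uses $p>2$ throughout, for example $|x|_2\leq\norm{x}_E$ and the interpolation between second and fourth moments in \Cref{thm:overlap}. So it does not simply rerun with the exponents $p_j'<2$. The paper instead dualises a superspace $F\supseteq E_j^*$ to $F^*=E_j\oplus_2V^*$, where $V^*$ is a quotient of $W_j$. It then applies \Cref{lem:finite-selection} with the quotient form of \Cref{lem:local-hilbert} (\Cref{prop:dual-obstruction}). For the statement at hand you do not need this: $Z_\rho$ is reflexive, so an unconditional basis of $Z_\rho^*$ would yield one of $Z_\rho$ through its biorthogonal functionals, as in the proof of \Cref{cor:schauder-bases}.
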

\end{samepage}

The spaces $Z_\rho$ and $Z_\rho^*$ are superreflexive and admit shrinking, boundedly complete bases, by \Cref{cor:schauder-bases}\ref{it:schauder-existence}. By \Cref{cor:schauder-bases}\ref{it:schauder-tails}, these bases can be chosen to be asymptotically monotone.

The projection bound is sharp over $\C$. Kalton and Wood \cite{KaltonWood1976} showed that every contractively complemented subspace of a complex Banach space with a $1$-unconditional basis again admits a $1$-unconditional basis; see also \cite[Theorem~7.18]{Randrianantoanina2001}. Thus the complex example satisfies $1<\norm{P_\rho}<1+\rho$, with $\rho$ arbitrarily small. The corresponding $1$-unconditional conclusion fails over $\R$: counterexamples were obtained by Lewis \cite{Lewis1979} and by Benyamini, Flinn and Lewis \cite{BenyaminiFlinnLewis1984}; see also \cite[Proposition~7.25]{Randrianantoanina2001}.

De Hevia and Tradacete also asked whether, in a decomposition of a Banach lattice into two closed subspaces, at least one must be isomorphic to a Banach lattice \cite[Question~4]{DeHeviaTradacete2025}. This is the \emph{primariness} question for the class; see \Cref{subsec:two-complementary-ranges}.

In earlier work \cite{Acuaviva2026}, we gave a negative answer by decomposing a nonseparable $C(K)$-space into two subspaces neither isomorphic to a Banach lattice. \Cref{thm:real-consequences}\ref{it:main-complement} gives the following separable counterpart.

\Needspace{5\baselineskip}
\begin{maincorollary}\label{cor:separable-nonprimarity}
The class of separable real Banach lattices is not primary.
\end{maincorollary}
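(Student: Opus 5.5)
The corollary follows from \Cref{thm:real-consequences}\ref{it:main-complement}, once primariness of a class is made precise. I use the formulation of \cite[Question~4]{DeHeviaTradacete2025}, made explicit in \Cref{subsec:two-complementary-ranges}: a class $\mathcal C$ of Banach spaces is \emph{primary} if, whenever $E\in\mathcal C$ and $E=U\oplus W$ with $U,W$ closed subspaces (equivalently, $U=P(E)$ and $W=(I_E-P)(E)$ for a bounded projection $P$), at least one of $U,W$ is isomorphic to a space in $\mathcal C$. To show that separable real Banach lattices do not form a primary class, I exhibit a single separable real Banach lattice with a bounded projection whose range and kernel are both not isomorphic to any Banach lattice.

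Fix any $\rho>0$, for instance $\rho=1$, and take $X_\rho$ and $P_\rho$ as in \Cref{thm:real-consequences}\ref{it:main-complement}. First, $X_\rho=\bigl(\bigoplus_j\ell_{p_j}^{N_j}(\R)\bigr)_2$ is a separable real Banach lattice under the coordinatewise order. It has a $1$-unconditional basis formed by the concatenated unit vector bases, and its norm is a lattice norm. Second, $X_\rho=Z_\rho\oplus W_\rho$, where $Z_\rho=P_\rho(X_\rho)$ and $W_\rho=(I_{X_\rho}-P_\rho)(X_\rho)$. These are complementary closed subspaces, because $P_\rho$ is a bounded projection.

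By \Cref{thm:real-consequences}\ref{it:main-separation}, $\chi_{\mathrm{DPR}}(Z_\rho)=\infty$, and part \ref{it:main-complement} gives the same for $W_\rho$. Every Banach lattice has DPR-lust, and finiteness of $\chi_{\mathrm{DPR}}$ is preserved under isomorphism, since the constant changes by at most a factor of $\norm{T}\norm{T^{-1}}$. Hence neither $Z_\rho$ nor $W_\rho$ is isomorphic to a Banach lattice, separable or not. This contradicts primariness of the class.

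There is no genuine obstacle here; the substance lies entirely in \Cref{thm:real-consequences}. The only point needing care is the fact that Banach lattices have DPR-lust with a uniform constant. This is recalled from \cite{DubinskyPelczynskiRosenthal1972,FigielJohnsonTzafriri1975}: finite-dimensional subspaces of a Banach lattice almost embed into finite-dimensional sublattices, which have $1$-unconditional bases of disjoint elements. Together with isomorphism invariance, this gives the conclusion.
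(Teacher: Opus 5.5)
Your proposal is correct and follows the paper's route. The paper likewise deduces the corollary directly from \Cref{thm:real-consequences}\ref{it:main-complement}, as realised in \Cref{cor:two-nonlattice-ranges}: both complementary ranges fail DPR-lust, while every nonzero real Banach lattice has DPR-lust and this property is invariant under isomorphism, so neither summand of the separable lattice $X_\rho$ is isomorphic to a Banach lattice.
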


The earlier decomposition also shows that the class of $C(K)$-spaces is not primary \cite{Acuaviva2026}. In the present setting, \Cref{thm:real-consequences}\ref{it:main-complement} gives the same conclusion for separable real spaces with DPR-lust and for those with an unconditional basis.

All four spaces in \Cref{thm:real-consequences}\ref{it:main-complement} also have a contractive unconditional finite-dimensional decomposition and the metric approximation property, as shown in \Cref{cor:two-nonlattice-ranges}\ref{it:two-fdd}. The Kalton--Peck space has an unconditional decomposition into two-dimensional subspaces \cite{KaltonPeck1979}, while Johnson, Lindenstrauss and Schechtman \cite{JohnsonLindenstraussSchechtman1980} showed that it fails GL-lust. Casazza and Kalton \cite[Theorem~3.8]{CasazzaKalton1996} proved that GL-lust together with an unconditional finite-dimensional decomposition of uniformly bounded block dimensions forces an unconditional basis. Thus the unconditional finite-dimensional decompositions in the present examples necessarily have unbounded block dimensions.

Over $\R$, Lacey's stabilisation theorem \cite[p.~49, second Corollary]{Lacey1977} states that a Banach space $V$ has GL-lust if and only if $V\oplus c_0$ has DPR-lust. Consequently, $Z_\rho\oplus_\infty c_0$ has DPR-lust, although its contractively complemented subspace $Z_\rho$ does not. Thus DPR-lust fails to pass even to contractively complemented subspaces.

\Needspace{8\baselineskip}
\subsection*{Idea of the proof and organisation}

\Cref{sec:preliminaries} introduces the notation and recalls the two notions of local unconditional structure, together with the Hilbert-space and probability conventions used in the proofs.

The construction in \Cref{sec:construction} forms the technical core of the paper. It has two main steps: constructing an $\ell_2$-sum $Y=(\bigoplus_{j=1}^{\infty}E_j)_2$ without DPR-lust, and realising an isomorphic copy $Z_\rho$ as a complemented subspace of $X_\rho$ with projection norm less than $1+\rho$. Both use the finite-dimensional tensor spaces $E_j\subseteq L_{p_j}$ introduced in \Cref{sec:tensors}, with suitably chosen exponents $p_j\downarrow2$.

To prove failure of DPR-lust, we must exclude uniformly controlled unconditional bases in every finite-dimensional superspace of $E_j$ inside $Y$. The tensor and fourth-moment estimates in \Cref{lem:packet-estimates,lem:fourth-moment} first give a more restricted obstruction: a subspace of dimension at most $3\dim E_j$ with a controlled unconditional basis has small Hilbert overlap with $E_j$, even after a Hilbert space is adjoined, as shown in \Cref{thm:overlap,prop:finite-obstruction}.

The crucial passage to arbitrary superspaces is the finite selection principle of \Cref{lem:finite-selection}. Using the trace estimate of \Cref{lem:trace-estimate}, it selects at most $3\dim E_j$ coordinates from a proposed basis with a controlled unconditional constant, so that the projection onto those coordinates retains a substantial trace after compression to $E_j$. The number of selected coordinates is independent of the dimension of the superspace.

In \Cref{sec:choosing-summands}, we recursively choose the subsequence $(E_j)_{j=1}^{\infty}$ of tensor spaces used to form $Y$. Together with \Cref{lem:local-hilbert}, this choice allows the selected complementary components to be replaced by vectors in a Hilbert space with controlled distortion. The Gaussian estimate then turns the retained trace into a lower bound for overlap. Comparing the two overlap bounds forces the unconditional constant to grow with $j$, uniformly over all such superspaces. This gives \Cref{thm:separated} and proves that $Y$ fails DPR-lust.

Complementation comes from tensorising the projections onto the two-dimensional function spaces defining $E_j$, as explained in \Cref{sec:tensors}. Near $p=2$, the first-order terms in the logarithm of the one-factor projection bound cancel. The choice of parameters in \Cref{sec:parameters} therefore makes the tensor projection norms tend to one while the obstruction to DPR-lust grows. The complemented embedding is then constructed in \Cref{sec:complemented-embedding}.

The embeddings in \Cref{sec:embedding} pass to finite-dimensional $\ell_p$ spaces while retaining the required projection bound. Over $\R$, \Cref{lem:sampling} uses exact covariance on a circle grid; over $\C$, \Cref{lem:complex-sampling} uses finite conditional expectations and a correction of an approximate inverse. The embeddings and their left inverses are uniformly bounded. The corresponding diagonal operators $A\colon Y\to X_\rho$ and $B\colon X_\rho\to Y$ satisfy $BA=I_Y$, so $Z_\rho=A(Y)$ is isomorphic to $Y$ and $P_\rho=AB$ is a projection of norm less than $1+\rho$. The ambient sum $X_\rho$ has its canonical $1$-unconditional basis.

In \Cref{sec:lattices}, we establish the GL/DPR separation and its dual consequences. The GL-lust bound follows from \Cref{lem:complemented-gl}, while the quotient version of the local Hilbert estimate yields the dual obstruction in \Cref{prop:dual-obstruction}. Over $\R$, a stronger recursion controlling the earlier ambient spaces gives two complementary summands, neither isomorphic to a Banach lattice, as established in \Cref{cor:two-nonlattice-ranges}.

Finally, \Cref{sec:geometry} constructs conditional asymptotically monotone bases for the examples and their duals, as stated in \Cref{cor:schauder-bases}. The section concludes with brief geometric consequences of the $\ell_2$-sum structure.

\section{Preliminaries and notation}\label{sec:preliminaries}

We follow the notation and terminology in \cite{LindenstraussTzafriri1977}, unless otherwise specified. We write $\mathbb K$ for the scalar field, which may be either $\R$ or $\C$. Unless stated otherwise, the definitions below apply over both fields. We treat the two scalar fields together in \Cref{sec:construction}, indicating where their proofs differ.

\subsection{Notation and unconditional constants}
Throughout, an \emph{operator} is a bounded linear map. For Banach spaces $E$ and $F$ over $\mathbb K$, we denote by $\mathscr B(E,F)$ the Banach space of operators from $E$ to $F$, equipped with the operator norm. When $E=F$, we write $\mathscr B(E)$ in place of $\mathscr B(E,E)$. The dual of $E$ is denoted by $E^*$, and $I_E$ denotes the identity operator on $E$. Given $x\in F$ and $f\in E^*$, we write $x\otimes f$ for the rank-one operator 
\begin{equation*}
 (x\otimes f)(y)=f(y)x\qquad(y\in E).
\end{equation*}

\Needspace{8\baselineskip}
We write $\N=\{1,2,\ldots\}$. For a family $(E_j)_{j\in J}$ of Banach spaces, the notation $(\bigoplus_{j\in J}E_j)_2$ denotes their \emph{$\ell_2$-sum}: its elements are the families $(x_j)_{j\in J}$, with $x_j\in E_j$ for $j\in J$ and
\begin{equation*}
 \norm{(x_j)_{j\in J}}
   =\left(\sum_{j\in J}\norm{x_j}_{E_j}^2\right)^{1/2}<\infty.
\end{equation*}

\Needspace{8\baselineskip}
Let $(b_i)_{i\in I}$ be a Schauder basis of a Banach space $E$. We use $I=\{1,\ldots,d\}$ when $E$ is finite-dimensional and $I=\N$ otherwise. We denote its coordinate functionals by $(b_i^*)_{i\in I}\subseteq E^*$. Its \emph{basis constant} is
\begin{equation*}
 b\bigl((b_i)_{i\in I}\bigr)
   =\sup_{m\in I}\left\|\sum_{i=1}^m b_i\otimes b_i^*\right\|,
\end{equation*}
the supremum of the norms of its initial coordinate projections. When $I=\N$, the basis is \emph{asymptotically monotone} if
\begin{equation*}
 \lim_{m\to\infty}\left\|\sum_{i=1}^m b_i\otimes b_i^*\right\|=1;
\end{equation*}
see \cite[Section~1]{MujicaVieira2010}.

\Needspace{7\baselineskip}
For $C\geq1$, we say that the basis is \emph{$C$-unconditional} if
\begin{equation*}
 \left\|\sum_{i\in I}\theta_i b_i\otimes b_i^*\right\|\leq C
\end{equation*}
for every finitely supported family $(\theta_i)_{i\in I}$ in $\mathbb K$ satisfying $|\theta_i|\leq1$ for every $i\in I$. In particular, the coordinate projection $\sum_{i\in M}b_i\otimes b_i^*$ has norm at most $C$ for every $M\subseteq I$.

For a finite-dimensional Banach space $E$, we denote by $u(E)$ the infimum of the unconditional basis constants over all bases of $E$. We shall use without further mention that rescaling the basis vectors by nonzero scalars leaves the unconditional basis constant unchanged.

\subsection{Hilbert norms and factorisation}\label{sec:hilbert-norms}
In addition to their $L_p$ norms, the tensor spaces of \Cref{sec:tensors} carry the Euclidean norm of their coefficient vectors; we call this the \emph{coefficient Hilbert norm} and denote it by $|\cdot|_2$. Hilbert tensor products are denoted by $\otimes_2$. Whenever we discuss adjoints, orthogonality or Hilbert--Schmidt norms on these spaces, we refer to their coefficient Hilbert structures. In particular, orthogonal projections need not be contractive for the Banach norm.

Our inner products are linear in the first variable. We write $A^{\mathsf T}$ for the transpose of a real matrix and $A^*$ for a Hilbert adjoint. For Hilbert spaces $H$ and $K$ with $\dim H=d<\infty$, we set
\begin{equation*}
 \langle A,B\rangle_{\mathrm{HS}}=\tr(B^*A),\qquad
 \norm T_{\mathrm{HS}}^2=\sum_{i=1}^d|Te_i|_2^2,
\end{equation*}
for $A,B,T\in\mathscr B(H,K)$, where $(e_i)_{i=1}^d$ is any orthonormal basis of $H$. The notation $\norm A_{2\to2}$ refers to the operator norm between the specified Hilbert spaces.

For isomorphic Banach spaces $E$ and $F$, we denote their \emph{Banach--Mazur distance} by
\begin{equation*}
 d_{\mathrm{BM}}(E,F)
 =\inf\{\norm J\norm{J^{-1}}:
             J\colon E\to F\text{ is an isomorphism}\}.
\end{equation*}
Given $D\geq1$, we say that $E$ is \emph{$D$-Hilbertian} if there is a Hilbert norm $|\cdot|_H$ on $E$ such that $|x|_H\leq\norm x_E\leq D|x|_H$ for every $x\in E$. Equivalently, $E$ admits an isomorphism $J$ onto a Hilbert space with $\norm J\leq1$ and $\norm{J^{-1}}\leq D$.

For $T\in\mathscr B(E,F)$, we define its \emph{Hilbert-factorisation norm} by
\begin{equation*}
 \gamma_2(T)=\inf\{\norm A\norm B:T=BA,\
                 A\in\mathscr B(E,H),\ B\in\mathscr B(H,F),\
                 H\text{ a Hilbert space}\},
\end{equation*}
with the convention that the infimum of the empty set is $+\infty$. The operators for which $\gamma_2$ is finite form a linear space on which $\gamma_2$ is a norm. We shall also use that $\gamma_2(STR)\leq\norm S\,\gamma_2(T)\norm R$.

\Needspace{10\baselineskip}
\subsection{Local unconditional structure}\label{subsec:local-unconditional}
We shall use two standard notions of local unconditional structure: DPR-lust and GL-lust. They differ in whether the finite-dimensional spaces with unconditional bases must lie inside the given space or may instead be auxiliary spaces in a factorisation. We now recall the definitions for a Banach space $Z$.

We say that $Z$ has \emph{local unconditional structure in the sense of Dubinsky, Pe\l czy\'nski and Rosenthal}, or \emph{DPR-lust} \cite{DubinskyPelczynskiRosenthal1972}, if there is a constant $C\geq1$ such that every nonzero finite-dimensional subspace $V\subseteq Z$ is contained in a finite-dimensional subspace $F\subseteq Z$ with a $C$-unconditional basis. To keep track of the constants, we put
\begin{equation*}
\begin{aligned}
 \lambda_{\mathrm{DPR}}(Z;V)
   &=\inf\{u(F):V\subseteq F\subseteq Z,\ \dim F<\infty\},\\
 \chi_{\mathrm{DPR}}(Z)
   &=\sup_{\substack{V\subseteq Z\\0<\dim V<\infty}}
          \lambda_{\mathrm{DPR}}(Z;V).
\end{aligned}
\end{equation*}
We call $\chi_{\mathrm{DPR}}(Z)$ the \emph{DPR constant} of $Z$. Thus $Z$ has DPR-lust precisely when this constant is finite.

For the Gordon--Lewis notion \cite{GordonLewis1974}, the finite-dimensional space with an unconditional basis may instead be an auxiliary space. Given a nonzero finite-dimensional $V\subseteq Z$, let $\iota_V\colon V\to Z$ denote the inclusion. We say that $Z$ has \emph{local unconditional structure in the sense of Gordon and Lewis}, or \emph{GL-lust}, if each $\iota_V$ factors through a finite-dimensional space $U$ with a $1$-unconditional basis, with the products of the norms of the two operators bounded independently of $V$. The \emph{GL constant} is
\begin{equation*}
 \chi_{\mathrm{GL}}(Z)
 =\sup_{\substack{V\subseteq Z\\0<\dim V<\infty}}
 \inf\{\norm a\norm b:
       V\xrightarrow{a}U\xrightarrow{b}Z,\ ba=\iota_V\}.
\end{equation*}
Here the infimum is taken over all finite-dimensional Banach spaces $U$ with a $1$-unconditional basis and all bounded linear maps $a\colon V\to U$ and $b\colon U\to Z$ satisfying the indicated identity. Thus $Z$ has GL-lust precisely when $\chi_{\mathrm{GL}}(Z)<\infty$. The map $b$ need not be injective; unlike the superspace $F$ in the DPR definition, $U$ need not be realised inside $Z$.

Every finite-dimensional space with a $C$-unconditional basis admits an equivalent norm, at distortion at most $C$, for which that basis is $1$-unconditional. Applying this observation to the superspaces in the DPR definition gives $\chi_{\mathrm{GL}}(Z)\leq\chi_{\mathrm{DPR}}(Z)$. Both properties are invariant under isomorphism, by applying an isomorphism to the superspaces or the factorisations, respectively. The infima need not be attained, so we allow an arbitrarily small increase in a constant when choosing a superspace or a factorisation.

For every nonzero real Banach lattice $L$, one has $\chi_{\mathrm{DPR}}(L)=1$ and hence $\chi_{\mathrm{GL}}(L)=1$ \cite[p.~397]{FigielJohnsonTzafriri1975}.

\subsection{Decompositions and approximation}
\label{subsec:decompositions} A sequence $(Z_j)_{j=1}^{\infty}$ of finite-dimensional subspaces of a Banach space $Z$ is a \emph{finite-dimensional decomposition} if every $z\in Z$ has a unique norm-convergent representation $z=\sum_{j=1}^{\infty}z_j$, with $z_j\in Z_j$ for every $j\geq1$. We call this decomposition \emph{contractive and unconditional} if, for every $m\geq1$ and every choice of vectors $(z_j)_{j=1}^m$ and scalars $(\theta_j)_{j=1}^m$ satisfying $z_j\in Z_j$ and $|\theta_j|\leq1$ for $1\leq j\leq m$, one has
\begin{equation*}
 \left\|\sum_{j=1}^{m}\theta_jz_j\right\|
   \leq\left\|\sum_{j=1}^{m}z_j\right\|.
\end{equation*}
Here the multipliers act on the summands of the decomposition. The condition places no uniform bound on the unconditional basis constants of the spaces $Z_j$.

The space $Z$ has the \emph{metric approximation property} if its identity operator can be approximated uniformly on compact subsets by finite-rank contractions. Clearly, a space with a contractive unconditional finite-dimensional decomposition has the metric approximation property, since its initial block projections are finite-rank contractions converging strongly to the identity.

\subsection{Probability conventions}\label{subsec:probability}
The tensor spaces of \Cref{sec:tensors} are defined on probability spaces. For $1<p<\infty$, we write $p'=p/(p-1)$ for the conjugate exponent. Finite-dimensional $\ell_p$ spaces carry the usual counting norm unless a normalised probability norm is explicitly specified.

For an integrable random variable $\xi$, its expectation is denoted by $\mathbb E[\xi]$. A subscript indicates the variables over which the expectation is taken: for example, $\mathbb E_t[\Phi(t,s)]$ means that $t$ is averaged and $s$ is held fixed. In the absence of a subscript, we average over all random variables occurring in the expression. By a sequence of \emph{Rademacher variables} $(\eps_i)_{i=1}^{\infty}$ we mean independent random variables taking the values $-1$ and $1$, each with probability $1/2$.

A \emph{standard scalar Gaussian} is a standard real Gaussian when $\mathbb K=\R$, and a variable $(g+ih)/\sqrt2$, with $g,h$ independent standard real Gaussians, when $\mathbb K=\C$. In both cases its second absolute moment is one.

Let $H$ be a Hilbert space over $\mathbb K$ of dimension $q\geq1$, and let $(e_i)_{i=1}^q$ be an orthonormal basis of $H$. A \emph{normalised Gaussian vector} in $H$ is a random vector of the form $\zeta=q^{-1/2}\sum_{i=1}^q g_i e_i$, where $(g_i)_{i=1}^q$ are independent standard scalar Gaussian variables. Its covariance operator is $I_H/q$. Consequently, for every linear operator $T$ from $H$ into a Hilbert space,
\begin{equation}\label{eq:gaussian-hs}
 \mathbb E[|\zeta|_2^2]=1,\qquad
 \norm T_{\mathrm{HS}}^2=q\,\mathbb E[|T\zeta|_2^2].
\end{equation}
\section{Construction of the examples}\label{sec:construction}

We work over $\mathbb K=\R$ or $\mathbb K=\C$ unless stated otherwise, indicating the differences between the two cases where necessary. Fix $\rho>0$, and write $X,P,Z$ for $X_\rho,P_\rho,Z_\rho$.

We construct an $\ell_2$-sum of finite-dimensional spaces without DPR-lust and realise it as a complemented subspace of a space with a $1$-unconditional basis, with projection norm less than $1+\rho$.

\subsection{Circle and sphere tensors}\label{sec:tensors}

For $\mathbb K=\R$, let $\Omega=\T=\R/(2\pi\mathbb Z)$ carry normalised Haar measure and put $w(t)=\sqrt2(\cos t,\sin t)$. For $\mathbb K=\C$, let $\Omega$ be the unit sphere of $\C^2$ with its invariant probability measure and put $w(t)=\sqrt2t$. In either case, $w$ is uniformly distributed on the sphere of radius $\sqrt2$ in $\mathbb K^2$. For each $n\geq1$, set
\begin{equation*}
 \mathcal H_n=(\mathbb K^2)^{\otimes_2 n},\qquad q_n=\dim\mathcal H_n=2^n.
\end{equation*}
For $t=(t_s)_{s=1}^n\in\Omega^n$, the tensor $W(t)=\bigotimes_{s=1}^nw(t_s)$ satisfies
\begin{equation}\label{eq:isotropy}
 |W(t)|_2^2=q_n,\qquad \mathbb E_t[W(t)W(t)^*]=I_{\mathcal H_n}.
\end{equation}
For $2<p\leq3$, we denote by $E(n,p)$ the space $\mathcal H_n$ with norm
\begin{equation*}
 \norm{x}_{E(n,p)}=\norm{f_x}_{L_p(\Omega^n)},\qquad f_x(t)=\langle x,W(t)\rangle.
\end{equation*}
We identify $E(n,p)$ with its image under $x\mapsto f_x$ in $L_p(\Omega^n)$. For $r>0$, put
\begin{equation}\label{eq:scalar-constants}
 \alpha_r=(\mathbb E[|w_1|^r])^{1/r},\qquad
 c_p=\alpha_p\alpha_{p'},
 \qquad \gamma_p=(\mathbb E[|g|^p])^{1/p},
\end{equation}
where $g$ is a standard scalar Gaussian as in \Cref{subsec:probability}. These constants depend on the fixed scalar field. Explicitly,
\begin{equation}\label{eq:explicit-alpha}
 \alpha_r=
 \begin{cases}
 \norm{\sqrt2\cos t}_{L_r(\T)},&\mathbb K=\R,\\
 \sqrt2(1+r/2)^{-1/r},&\mathbb K=\C,
 \end{cases}
\end{equation}
since the squared modulus of the first coordinate of a uniform unit vector in $\C^2$ is uniform on $[0,1]$. Throughout this subsection, $\zeta$ denotes a normalised Gaussian vector in $\mathcal H_n$.

\begin{lemma}\label{lem:packet-estimates}
Let $E=E(n,p)$. Then the following hold.
\begin{resultparts}
\item\label{it:packet-hilbert}
For every $x\in\mathcal H_n$,
\begin{equation*}
 |x|_2\leq\norm x_E\leq \alpha_p^n|x|_2.
\end{equation*}
\item\label{it:packet-products}
If $x$ is a tensor product of $n$ unit vectors, then
\begin{equation*}
 \norm x_E=\alpha_p^n,\qquad
 \norm{\langle\,\cdot\,,x\rangle}_{E^*}\leq\alpha_{p'}^n.
\end{equation*}
\item\label{it:packet-gaussian}
For the normalised Gaussian vector $\zeta$ and every contraction $A\colon\mathcal H_n\to\mathcal H_n$,
\begin{equation*}
 \mathbb E_\zeta[\norm \zeta_E^p]=\gamma_p^p,\qquad
 \mathbb E_\zeta[\norm{A\zeta}_E^2]\leq\gamma_p^2.
\end{equation*}
\end{resultparts}
\end{lemma}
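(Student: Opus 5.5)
The plan is to work directly with $f_x(t)=\langle x,W(t)\rangle$ and to exploit the product structure of $W$ together with the isotropy relations \eqref{eq:isotropy}.

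\emph{Part \ref{it:packet-hilbert}.} By \eqref{eq:isotropy}, $\mathbb E_t|f_x(t)|^2=\langle \mathbb E[W W^*]x,x\rangle=|x|_2^2$. Since $p>2$, monotonicity of $L_p$ norms on a probability space gives $|x|_2=\norm{f_x}_{L_2}\le\norm{f_x}_{L_p}$. For the upper bound I would argue by induction on $n$, using Minkowski's integral inequality to interchange the $L_p$ norm in the last variable with the $L_p$ norm in the others. For $n=1$, write $x=|x|_2u$ with $u$ a unit vector. Rotation invariance of $w$ shows that $\langle u,w(t)\rangle$ has the law of $w_1$, so $\norm{f_x}_{L_p}=\alpha_p|x|_2$.

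For general $n$, write $x=\sum_{a}e_a\otimes x_a$ in the first factor, so that $f_x(t)=\langle y(t'),w(t_1)\rangle$ with $y(t')=(f_{x_a}(t'))_a\in\mathbb K^2$ and $t'=(t_2,\dots,t_n)$. Then
\begin{equation*}
\norm{f_x}_p^p=\mathbb E_{t'}\bigl[\alpha_p^p|y(t')|_2^p\bigr],
\end{equation*}
so $\norm{f_x}_p=\alpha_p\norm{|y|_2}_{L_p}$. Minkowski's inequality in $L_{p/2}$, applied to $|y|_2^2=\sum_a|f_{x_a}|^2$, gives $\norm{|y|_2}_p^2\le\sum_a\norm{f_{x_a}}_p^2$. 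The inductive hypothesis bounds the right side by $\sum_a\alpha_p^{2(n-1)}|x_a|_2^2=\alpha_p^{2(n-1)}|x|_2^2$, which yields $\norm x_E\le\alpha_p^n|x|_2$.

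\emph{Part \ref{it:packet-products}.} For $x=\bigotimes_s u_s$ with unit vectors $u_s$, we have $f_x(t)=\prod_s\langle u_s,w(t_s)\rangle$, a product of independent factors each distributed like $w_1$. Hence $\norm{f_x}_p=\alpha_p^n$. For the dual bound, note that for any $y$,
\begin{equation*}
\langle y,x\rangle=\mathbb E_t\bigl[f_y(t)\overline{f_x(t)}\bigr],
\end{equation*}
because $\mathbb E[WW^*]=I$. Hölder's inequality then gives $|\langle y,x\rangle|\le\norm{f_y}_p\norm{f_x}_{p'}$, and by the same product computation $\norm{f_x}_{p'}=\alpha_{p'}^n$. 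Both claims follow.

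\emph{Part \ref{it:packet-gaussian}.} For fixed $t$, $f_\zeta(t)=\langle\zeta,W(t)\rangle$ is a scalar Gaussian with variance $|W(t)|_2^2/q_n=1$. It is a standard scalar Gaussian, the complex case using circular symmetry of $\zeta$. Fubini gives $\mathbb E_\zeta\norm{\zeta}_E^p=\mathbb E_t\mathbb E_\zeta|f_\zeta(t)|^p=\gamma_p^p$. For $A\zeta$, the variable $f_{A\zeta}(t)=\langle\zeta,A^*W(t)\rangle$ is a centred Gaussian of variance $|A^*W(t)|_2^2/q_n\le1$, since $A$ is a contraction, so $\mathbb E_\zeta|f_{A\zeta}(t)|^p\le\gamma_p^p$. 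Jensen's inequality (as $2<p$) and Fubini then give
\begin{equation*}
\mathbb E\norm{A\zeta}_E^2\le\bigl(\mathbb E\norm{A\zeta}_E^p\bigr)^{2/p}\le\gamma_p^2.
\end{equation*}
The one point needing care is the complex case, where $A^*$ need not preserve circular symmetry. This is not a problem: $\langle\zeta,v\rangle$ is a circularly symmetric complex Gaussian with variance $|v|_2^2/q_n$ for every fixed vector $v$, so the bound holds as stated. The main obstacle is the inductive Minkowski step in part \ref{it:packet-hilbert}, and it is routine once the $L_{p/2}$ triangle inequality is used.
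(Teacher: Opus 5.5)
Your proposal is correct and follows the paper's proof essentially step for step. You use isotropy for the lower bound, and induction on the first tensor factor with the one-variable moment identity and Minkowski in $L_{p/2}$ for the upper bound. For the second part you use the product computation with H\"older, and for the third Fubini with Gaussian moments and Jensen. The only difference is cosmetic: the paper proves its one-variable bound for Hilbert-space-valued maps $V\colon\mathbb K^2\to K$ via convexity in the eigenvalues of $V^*V$, while you use the scalar identity $\mathbb E|\langle y,w\rangle|^p=\alpha_p^p|y|_2^p$, which is all the induction actually needs.
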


\begin{proof}
For \ref{it:packet-hilbert}, isotropy gives $\norm{f_x}_2=|x|_2$, which proves the lower bound. To prove the upper bound, let $V\colon\mathbb K^2\to K$ be a linear map into a Hilbert space. Choose an orthonormal eigenbasis of $V^*V$, with corresponding eigenvalues $\lambda_1,\lambda_2$. By orthogonal or unitary invariance, the coordinates of $w$ in this basis have the same joint distribution as $(w_1,w_2)$. Hence
\begin{equation*}
 \mathbb E[|Vw|_2^p]
 =\mathbb E\left[(\lambda_1|w_1|^2+\lambda_2|w_2|^2)^{p/2}\right].
\end{equation*}
For fixed $\lambda_1+\lambda_2$, the right-hand side is convex in the eigenvalues and therefore attains its maximum when one of them is zero. Since $\lambda_1+\lambda_2=\tr(V^*V)=\norm V_{\mathrm{HS}}^2$, we obtain
\begin{equation*}
 \mathbb E[|Vw|_2^p]
 \leq\alpha_p^p(\lambda_1+\lambda_2)^{p/2}
 =\alpha_p^p\norm V_{\mathrm{HS}}^p.
\end{equation*}
Taking $p$th roots gives
\begin{equation*}
 (\mathbb E[|Vw(t)|_2^p])^{1/p}\leq\alpha_p\norm V_{\mathrm{HS}}.
\end{equation*}
For the tensor estimate, we argue by induction on $n$, the case $n=1$ following from the estimate above. For $n\geq2$, write $x=e_1\otimes x_1+e_2\otimes x_2$, where $(e_i)_{i=1}^2$ is the standard basis of $\mathbb K^2$ and $x_1,x_2\in\mathcal H_{n-1}$. With $t'=(t_s)_{s=2}^n$, we have
\begin{equation*}
 f_x(t)=\overline{w_1(t_1)}f_{x_1}(t')+\overline{w_2(t_1)}f_{x_2}(t').
\end{equation*}
Since $\overline w$ has the same distribution as $w$, we may apply the one-variable estimate in $t_1$. Integrating over $t'$ and then using Minkowski's inequality in $L_{p/2}(\Omega^{n-1})$ gives
\begin{equation*}
 \norm{f_x}_{L_p(\Omega^n)}
 \leq\alpha_p\left\|\left(\sum_{i=1}^2|f_{x_i}|^2\right)^{1/2}\right\|_{L_p(\Omega^{n-1})}
 \leq\alpha_p\left(\sum_{i=1}^2\norm{f_{x_i}}_{L_p(\Omega^{n-1})}^2\right)^{1/2}.
\end{equation*}
The induction hypothesis therefore yields
\begin{equation*}
 \norm x_E\leq\alpha_p^n\left(|x_1|_2^2+|x_2|_2^2\right)^{1/2}
 =\alpha_p^n|x|_2.
\end{equation*}

For \ref{it:packet-products}, let $x$ be a product unit vector. The scalar factors of $f_x$ have the same absolute moments as the first coordinate of $w$, so $\norm x_E=\alpha_p^n$. The dual estimate follows from isotropy and H\"older's inequality: $|\langle y,x\rangle|=|\mathbb E[f_y\overline{f_x}]| \leq\norm y_E\alpha_{p'}^n$.

To prove \ref{it:packet-gaussian}, observe that, for each fixed $t$, $\langle \zeta,W(t)\rangle$ is a scalar Gaussian of variance one, so Fubini's theorem gives the identity in \ref{it:packet-gaussian}. Replacing $\zeta$ by $A\zeta$ changes that variance to $q_n^{-1}|A^*W(t)|_2^2\leq1$. The same moment calculation, followed by the inequality between the second and $p$th moments, gives the remaining estimate since $p\geq2$.
\end{proof}

To complement $E(n,p)$ in $L_p(\Omega^n)$, we use the projection onto the span of $\overline{w_1},\overline{w_2}$ in each variable. In the real case, this is the first-harmonic projection. In either field it is
\begin{equation*}
 Qf(t)=\left\langle\mathbb E_s[f(s)w(s)],w(t)\right\rangle.
\end{equation*}
The coefficient map $f\mapsto\mathbb E_s[f(s)w(s)]$ from $L_p(\Omega)$ to $\ell_2^2$ has norm at most $\alpha_{p'}$, while the map $x\mapsto\langle x,w(\cdot)\rangle$ has norm $\alpha_p$, and hence $\norm Q\leq c_p$. Fubini's theorem gives the same bound when $Q$ acts on one variable of $L_p(\Omega^n)$. Composing these commuting projections gives a projection onto $E(n,p)$ of norm at most $c_p^n$. We denote this projection by $P_n$. In \Cref{sec:embedding}, we obtain a corresponding projection on a finite-dimensional $\ell_p$ space.

\subsubsection*{A trace estimate}
The symmetries of $E(n,p)$ also allow us to control the trace of an operator in terms of its Hilbert factorisation norm.

\begin{lemma}\label{lem:trace-estimate}
Every operator $T$ on $E(n,p)$ satisfies
\begin{equation}\label{eq:trace-estimate}
 \frac{|\tr T|}{q_n}\leq\frac{\gamma_p}{\alpha_p^n}\gamma_2(T).
\end{equation}
\end{lemma}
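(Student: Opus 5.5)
The plan is to take a near-optimal Hilbert factorisation $T=BA$, with $A\colon E\to H$ and $B\colon H\to E$, where $E=E(n,p)$ and $\norm A\norm B\leq(1+\eps)\gamma_2(T)$. Replacing $H$ by the closure of $A(E)$, we may assume $\dim H<\infty$. Give $E$ its coefficient Hilbert structure $|\cdot|_2$, so that $A$ and $B$ have Hilbert adjoints and Hilbert--Schmidt norms, and write $\tr T=\tr(BA)=\langle A,B^*\rangle_{\mathrm{HS}}$. Cauchy--Schwarz then gives
\begin{equation*}
 |\tr T|\leq\norm A_{\mathrm{HS}}\norm{B^*}_{\mathrm{HS}}.
\end{equation*}
It therefore suffices to prove
\begin{equation*}
 \norm A_{\mathrm{HS}}\leq q_n^{1/2}\gamma_p\norm A,
 \qquad
 \norm{B^*}_{\mathrm{HS}}\leq q_n^{1/2}\alpha_p^{-n}\norm B.
\end{equation*}
Multiplying these and letting $\eps\to0$ yields \eqref{eq:trace-estimate}. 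Each factor is estimated by averaging over a different random vector: a Gaussian vector for $A$ and a random product vector for $B^*$.

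For $A$, apply \eqref{eq:gaussian-hs} with the normalised Gaussian vector $\zeta$:
\begin{equation*}
 \norm A_{\mathrm{HS}}^2=q_n\mathbb E[|A\zeta|_H^2]\leq q_n\norm A^2\,\mathbb E[\norm\zeta_E^2].
\end{equation*}
By \Cref{lem:packet-estimates}\ref{it:packet-gaussian} with the identity as contraction, $\mathbb E[\norm\zeta_E^2]\leq\gamma_p^2$. This gives the first bound.

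For $B^*$, the Gaussian vector is replaced by the random product unit vector $u(t)=q_n^{-1/2}W(t)$. By \eqref{eq:isotropy}, $\mathbb E_t[u(t)u(t)^*]=I_{\mathcal H_n}/q_n$, so
\begin{equation*}
 \norm{B^*}_{\mathrm{HS}}^2=q_n\mathbb E_t[|B^*u(t)|_H^2].
\end{equation*}
For each $h\in H$ we have $\langle h,B^*u\rangle=\langle Bh,u\rangle$. Hence
\begin{equation*}
 |B^*u|_H\leq\norm B\,\norm{\langle\,\cdot\,,u\rangle}_{E^*}.
\end{equation*}
The bound $\alpha_{p'}^n$ from \Cref{lem:packet-estimates}\ref{it:packet-products} is too weak here, since $c_p>1$. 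The key point is therefore the sharp identity
\begin{equation*}
 \norm{\langle\,\cdot\,,u\rangle}_{E^*}=\alpha_p^{-n}
\end{equation*}
for every product unit vector $u$.

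The lower bound is immediate from $1=\langle u,u\rangle\leq\norm u_E\norm{\langle\cdot,u\rangle}_{E^*}$ together with $\norm u_E=\alpha_p^n$. For the upper bound, take the norming functional of $f_u$ in $L_p(\Omega^n)$:
\begin{equation*}
 \varphi=\frac{|f_u|^{p-2}\overline{f_u}}{\norm{f_u}_p^{p-1}},
 \qquad \norm\varphi_{L_{p'}}=1.
\end{equation*}
I would show that $\varphi$ restricted to $E$ equals $c\langle\cdot,u\rangle$ for some scalar $c$; then pairing with $u$ forces $c=\alpha_p^{-n}$. Since $f_u$ is a product of one-variable functions, so is $\varphi$, and the restriction is the tensor product of the one-variable restrictions. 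So it suffices to treat $n=1$, and by rotation invariance we may take $u=e_1$, so that $f_u=\overline{w_1}$.

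\begin{itemize}
\item In the real case, $|\cos t|^{p-2}\cos t$ is even and $\pi$-antiperiodic, so its first-harmonic coefficients are a multiple of $(1,0)$.
\item In the complex case, the invariance of $\varphi$ under $t\mapsto(t_1,e^{i\theta}t_2)$, with $\varphi$ being a function of $t_1$ alone, kills the $w_2$-coefficient.
\end{itemize}

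This restriction step is the one place where the specific symmetric structure of $E(n,p)$ enters. I expect it to be the main, though short, obstacle: care is needed with the conjugations in the complex pairing.
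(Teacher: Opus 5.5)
Your proof is correct, but it takes a different route from the paper's.

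\textbf{The paper's route.} It averages $U^{-1}TU$ over the finite group of sign changes and coordinate interchanges in each tensor factor. This group acts by isometries for both norms, and the average is $(\tr T/q_n)I_E$. Evaluating at a product unit vector $x$ with $\norm x_E=\alpha_p^n$, and pulling $\norm B$ out in the $E$-norm, gives
\[
\frac{|\tr T|}{q_n}\,\alpha_p^n\leq\norm B\bigl(\mathbb E_U[|AUx|_H^2]\bigr)^{1/2}.
\]
The orbit of $x$ has the same covariance as $\zeta$, so \Cref{lem:packet-estimates}\ref{it:packet-gaussian} finishes the proof. The paper therefore needs only the value $\norm x_E=\alpha_p^n$, and pays for it with the twirling identity from \cite{AubrunMullerHermes2026}.

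\textbf{Your route.} You split $|\tr(BA)|\leq\norm A_{\mathrm{HS}}\norm B_{\mathrm{HS}}$ and bound the two Hilbert--Schmidt norms separately. For $A$ you average over the Gaussian vector; for $B$ you average over the random product vectors $q_n^{-1/2}W(t)$. The price is the sharp dual norm $\norm{\langle\,\cdot\,,u\rangle}_{E^*}=\alpha_p^{-n}$ for product unit vectors $u$. Your norming-functional argument does establish this. In each factor, the coefficient $\int|\langle u_s,w\rangle|^{p-2}\langle u_s,w\rangle\langle w,v_s\rangle$ vanishes for $v_s\perp u_s$, by a reflection or phase symmetry. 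In the real case only evenness of $|\cos t|^{p-2}\cos t$ is needed; $\pi$-antiperiodicity plays no role.

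\textbf{What each buys.} Your approach avoids the group-averaging input. It yields the two separate inequalities
\[
\norm A_{\mathrm{HS}}\leq q_n^{1/2}\gamma_p\norm A,\qquad \norm B_{\mathrm{HS}}\leq q_n^{1/2}\alpha_p^{-n}\norm B.
\]
It also sharpens the dual estimate in \Cref{lem:packet-estimates}\ref{it:packet-products} from $\alpha_{p'}^n$ to $\alpha_p^{-n}$. That in turn would sharpen \eqref{eq:overlap-lower}, though the paper does not need this because $c_{p_n}^n\to1$.

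\textbf{One small slip.} If $\varphi|_E=c\langle\,\cdot\,,u\rangle$, then pairing with $u$ gives $c=\norm{f_u}_{L_p}=\alpha_p^n$, not $\alpha_p^{-n}$. The conclusion $\norm{\langle\,\cdot\,,u\rangle}_{E^*}\leq c^{-1}=\alpha_p^{-n}$ is unaffected.
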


\begin{proof}
Let $E=E(n,p)$, and let $U$ be uniformly distributed over the finite group generated by sign changes and coordinate interchanges in the tensor factors. Its elements are isometries for both norms. Averaging over this group removes off-diagonal entries and equalises the diagonal entries; see \cite[Section~2.5]{AubrunMullerHermes2026}. Thus, for any product unit vector $x$,
\begin{equation*}
 \mathbb E_U[U^{-1}TU]=\frac{\tr T}{q_n}I_E,
 \qquad
 \mathbb E_U[(Ux)(Ux)^*]=I_{\mathcal H_n}/q_n=\mathbb E[\zeta\zeta^*].
\end{equation*}
Let $T=BA$ be a Hilbert factorisation, with $A\colon E\to H$ and $B\colon H\to E$. Applying \Cref{lem:packet-estimates} and using equality of the covariances, we obtain
\begin{equation*}
 \frac{|\tr T|}{q_n}\alpha_p^n
 \overset{\text{\ref{it:packet-products}}}{=}\left\|\mathbb E_U[U^{-1}BAUx]\right\|_E
 \leq\norm B\bigl(\mathbb E_U[|AUx|_H^2]\bigr)^{1/2}
 =\norm B\bigl(\mathbb E_\zeta[|A\zeta|_H^2]\bigr)^{1/2}
 \!\overset{\text{\ref{it:packet-gaussian}}}{\leq}\gamma_p\norm A\norm B.
\end{equation*}
Taking the infimum over Hilbert factorisations proves the estimate.
\end{proof}
\subsection{A fourth-moment estimate}\label{sec:fourth-moments}

We shall need a fourth-moment bound for the vectors of a basis and its Hilbert biorthogonal family. We obtain it from King's multiplicativity theorem \cite{King2003} and a Hilbert--Schmidt estimate.

Put $\tau=5/4$ over $\R$ and $\tau=10/9$ over $\C$. We define an operator on $M_2(\mathbb K)$ and its tensor power on $M_{q_n}(\mathbb K)$ by
\begin{equation*}
 \mathcal D(A)=
 \begin{cases}
  (\tr(A)I_{\R^2}+A+A^{\mathsf T})/4,&\mathbb K=\R,\\
  (\tr(A)I_{\C^2}+A)/3,&\mathbb K=\C,
 \end{cases}
 \qquad \mathcal D_n=\mathcal D^{\otimes n}.
\end{equation*}
For a uniform unit vector $u$ in $\mathbb K^2$, the fourth-moment identity takes the form
\begin{equation}\label{eq:circle-moment}
 \mathbb E[uu^*Auu^*]=\frac12\mathcal D(A).
\end{equation}
Over $\R$, this follows from $\mathbb E[u_1^4]=3/8$, $\mathbb E[u_1^2u_2^2]=1/8$ and the vanishing of odd moments. Over $\C$, the corresponding identities are $\mathbb E[|u_1|^4]=1/3$ and $\mathbb E[|u_1u_2|^2]=1/6$, together with phase invariance. The map $\mathcal D$ is Hilbert--Schmidt self-adjoint. We next estimate the action of $\mathcal D_n$ on positive rank-one matrices.

\begin{lemma}\label{lem:rank-one-moment}
For $b\in\mathcal H_n$,
\begin{equation*}
 \norm{\mathcal D_n(bb^*)}_{\mathrm{HS}}^2
 \leq(\tau/2)^n|b|_2^4.
\end{equation*}
\end{lemma}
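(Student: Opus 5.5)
The plan is to read $\mathcal D$ as a unital qubit quantum channel (on complex $2\times2$ matrices). The bound then becomes multiplicativity of the maximal output $2$-norm under tensor powers, which is King's theorem \cite{King2003}. By homogeneity, $\mathcal D_n(bb^*)$ scales like $|b|_2^2$, so its squared HS norm scales like $|b|_2^4$. Hence we may assume $|b|_2=1$, so that $\rho=bb^*$ is a pure state on $\mathcal H_n\otimes_{\mathbb K}\C$. In the real case we regard $b\in\R^{q_n}\subseteq\C^{q_n}$ and extend $\mathcal D$ complex-linearly to $M_2(\C)$, keeping the formula $(\tr(A)I+A+A^{\mathsf T})/4$ with the plain transpose.

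\emph{Step 1: $\mathcal D$ is a unital channel.} By \eqref{eq:circle-moment}, $\mathcal D(A)=2\,\mathbb E[uu^*Auu^*]$, where $u$ is a uniform unit vector in $\mathbb K^2$. In the real case $u$ is real, so $uu^*=uu^{\mathsf T}$, and the identity holds for all complex $A$, since both sides are complex-linear and agree on real $A$. This exhibits $\mathcal D$ as a continuous mixture of Kraus maps $A\mapsto(\sqrt2\,uu^*)A(\sqrt2\,uu^*)$, so $\mathcal D$ is completely positive. We also have
\begin{equation*}
 \sum\text{Kraus}^*\text{Kraus}=2\,\mathbb E[uu^*]=I,
\end{equation*}
and $\mathbb E[uu^*]=I/2$ gives $\mathcal D(I)=I$. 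Thus $\mathcal D$ is a trace-preserving, unital channel on qubits. This matches the direct checks $\tr\mathcal D(A)=\tr A$ and $\mathcal D(I)=I$ from the explicit formulas.

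\emph{Step 2: the one-qubit maximal output $2$-norm.} For a unit vector $b\in\C^2$, set $P=bb^*$. In the real case, $\mathcal D(P)=(I+P+\overline P)/4$. Expanding $\tr$ of its square gives
\begin{equation*}
 \bigl(2+1+1+2+2+2|\langle b,\overline b\rangle|^2\bigr)/16\leq 10/16=5/8=\tau/2.
\end{equation*}
Equality holds for real $b$. In the complex case, $\mathcal D(P)=(I+P)/3$, whose eigenvalues are $2/3$ and $1/3$, so $\|\mathcal D(P)\|_{\mathrm{HS}}^2=5/9=\tau/2$. Mixed states give no larger value, by convexity of the HS norm. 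Hence $\nu_2(\mathcal D)^2=\tau/2$, where $\nu_2(\Phi)=\sup_\rho\|\Phi(\rho)\|_{\mathrm{HS}}$ is taken over density matrices.

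\emph{Step 3: multiplicativity.} King's theorem states that $\nu_p(\Phi\otimes\Omega)=\nu_p(\Phi)\nu_p(\Omega)$ for $p\geq1$ (in particular $p=2$), whenever $\Phi$ is a unital qubit channel and $\Omega$ is an arbitrary channel. We apply it inductively with $\Phi=\mathcal D$ and $\Omega=\mathcal D_{n-1}$, which is a channel by Step 1. This gives $\nu_2(\mathcal D_n)=\nu_2(\mathcal D)^n$, and therefore
\begin{equation*}
 \|\mathcal D_n(bb^*)\|_{\mathrm{HS}}^2\leq(\tau/2)^n.
\end{equation*}
Since $b$ may be entangled across the tensor factors, this multiplicativity is precisely where the real content lies.

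The main obstacle is the real case. There the transpose term makes $\mathcal D$ look like it involves a non-CP partial transpose, so the natural worry is that King's theorem does not apply. The Kraus representation of Step 1 resolves this: on complex matrices, $A\mapsto A^{\mathsf T}$ occurs only in combination with the other terms, which together form $2\mathbb E[uu^{\mathsf T}Auu^{\mathsf T}]$, a genuine CP map. A secondary point is that the one-qubit supremum must be taken over complex inputs, since the real $b\in\mathcal H_n$ may still produce complex-entangled intermediate objects in King's argument. Step 2 shows that allowing complex $b$ does not increase $\nu_2$ beyond $\tau/2$.
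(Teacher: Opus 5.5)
Your proof is correct, but it goes through a different multiplicativity theorem from the paper's. The paper writes $\mathcal D$ in finite measure-and-prepare (entanglement-breaking) form, $\mathcal D(A)=\frac{2}{|\mathcal U_{\mathbb K}|}\sum_{u\in\mathcal U_{\mathbb K}}\tr(P_uA)P_u$. Here $\mathcal U_{\R}$ consists of two orthonormal bases of $\R^2$ and $\mathcal U_{\C}$ of three mutually unbiased bases of $\C^2$. It then applies King's theorem for entanglement-breaking maps, which is the content of \cite{King2003}. The value $\nu_2(\mathcal D)^2=\tau/2$ is read off from the eigenvalue ranges $[1/4,3/4]$ and $[1/3,2/3]$, which is equivalent to your Step~2.

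You instead invoke multiplicativity for a unital qubit channel tensored with an arbitrary channel. That result is true, but it is a different paper of King (\emph{Additivity for unital qubit channels}, J. Math. Phys. \textbf{43} (2002)), so your attribution to \cite{King2003} should be corrected. Your route has the advantage that its hypotheses (complete positivity, trace preservation and unitality on $M_2(\C)$) come straight from your continuous Kraus representation, with no finite decomposition needed. The price is dependence on a more specialised and less elementary theorem.

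Observe, though, that your Kraus form already reads $\mathcal D(A)=2\,\mathbb E[\tr(uu^*A)\,uu^*]$, which is measure-and-prepare. Moreover, the uniform average over $\mathcal U_{\mathbb K}$ satisfies the same identity \eqref{eq:circle-moment} as the uniform distribution of $u$. Thus your Step~1 is one line away from the paper's representation, and the simpler entanglement-breaking theorem was available to you. In both routes the transpose in the real case is harmless for the reason you give: it only appears inside a completely positive combination built from the real projections $uu^{\mathsf T}$.
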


\begin{proof}
For a linear map $\Phi\colon M_d(\C)\to M_d(\C)$, write
\begin{equation*}
 \nu_2(\Phi)=\sup\{\norm{\Phi(A)}_{\mathrm{HS}}:A\geq0,\ \tr A=1\}.
\end{equation*}
King's theorem \cite[Theorem~1]{King2003} gives $\nu_2(\Phi^{\otimes n})=\nu_2(\Phi)^n$ for maps of the form $\Phi(A)=\sum_{\ell=1}^{m}\tr(M_\ell A)R_\ell$, where $M_\ell,R_\ell\geq0$, $\tr R_\ell=1$ for $1\leq\ell\leq m$, and $\sum_{\ell=1}^{m}M_\ell=I_{\C^d}$.

In the real case, extend $\mathcal D$ complex-linearly to $M_2(\C)$. Let $e_1,e_2$ be the standard unit vectors in $\C^2$, and put
\begin{equation*}
 \begin{aligned}
 \mathcal U_{\R}&=\left\{e_1,e_2,\frac{e_1+e_2}{\sqrt2},
                                    \frac{e_1-e_2}{\sqrt2}\right\},\\
 \mathcal U_{\C}&=\mathcal U_{\R}\cup
                  \left\{\frac{e_1+ie_2}{\sqrt2},
                         \frac{e_1-ie_2}{\sqrt2}\right\}.
 \end{aligned}
\end{equation*}
With $P_u=uu^*$, the defining formulas give
\begin{equation*}
 \mathcal D(A)=\frac{2}{|\mathcal U_{\mathbb K}|}
                  \sum_{u\in\mathcal U_{\mathbb K}}\tr(P_uA)P_u,
 \qquad \frac{2}{|\mathcal U_{\mathbb K}|}
                  \sum_{u\in\mathcal U_{\mathbb K}}P_u=I_{\C^2}.
\end{equation*}
Thus King's theorem applies. For positive $A$ with $\tr A=1$, we have $\tr\mathcal D(A)=1$, and the eigenvalues of $\mathcal D(A)$ lie in $[1/4,3/4]$ when $\mathbb K=\R$ and in $[1/3,2/3]$ when $\mathbb K=\C$. Consequently $\nu_2(\mathcal D)^2=\tau/2$, with equality at $A=e_1e_1^*$. Multiplicativity and homogeneity complete the proof.
\end{proof}

\Needspace{10\baselineskip}
We now use this estimate to bound the fourth moment of a square function associated with two families of tensors.

\begin{lemma}\label{lem:fourth-moment}
Let $B,V\colon\ell_2^k(\mathbb K)\to\mathcal H_n$ have columns $(b_i)_{i=1}^k$ and $(g_i)_{i=1}^k$, respectively. Let $x$ be the tensor product of $n$ independent uniform unit vectors in $\mathbb K^2$, independent of a uniform $t\in\Omega^n$, and set
\begin{equation*}
 S(x,t)^2=\sum_{i=1}^k|\langle x,g_i\rangle|^2|f_{b_i}(t)|^2.
\end{equation*}
Then
\begin{align}
 \mathbb E[S^2]&=q_n^{-1}\sum_{i=1}^k|b_i|_2^2|g_i|_2^2,
                                      \label{eq:square-second}\\
 \mathbb E[S^4]&\leq\frac{k}{q_n}(\norm B\norm V)^4\tau^n.
                                      \label{eq:square-fourth}
\end{align}
The norms of $B$ and $V$ are their Hilbert operator norms.
\end{lemma}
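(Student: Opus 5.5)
The plan is to compute both moments by separating the independent averages over $x$ and $t$, reduce the fourth-moment averages to the map $\mathcal D_n$ via \eqref{eq:circle-moment}, and then bound the resulting double sum with \Cref{lem:rank-one-moment} and a Cauchy--Schwarz argument.

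\emph{Second moment.} Since $x=\bigotimes_s u_s$ with the $u_s$ independent and uniform on the unit sphere of $\mathbb K^2$, each factor has covariance $I_{\mathbb K^2}/2$. Hence $\mathbb E[xx^*]=I_{\mathcal H_n}/q_n$ and $\mathbb E_x|\langle x,g_i\rangle|^2=|g_i|_2^2/q_n$. Isotropy \eqref{eq:isotropy} gives $\mathbb E_t|f_{b_i}(t)|^2=|b_i|_2^2$. Independence of $x$ and $t$ then yields \eqref{eq:square-second} term by term.

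\emph{Reduction of the fourth moment.} Expand
\begin{equation*}
 S^4=\sum_{i,j}|\langle x,g_i\rangle|^2|\langle x,g_j\rangle|^2|f_{b_i}(t)|^2|f_{b_j}(t)|^2
\end{equation*}
and average over $x$ and $t$ separately. Writing $|\langle x,g\rangle|^2=\langle xx^*,gg^*\rangle_{\mathrm{HS}}$ and using independence of the tensor factors, \eqref{eq:circle-moment} tensorises to $\mathbb E[xx^*Axx^*]=2^{-n}\mathcal D_n(A)$. Since $W(t)=2^{n/2}\bigotimes_s u_s$ with $u_s=w(t_s)/\sqrt2$ uniform, we likewise get $\mathbb E_t[WW^*AWW^*]=2^{n}\mathcal D_n(A)$. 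Put
\begin{equation*}
 a_{ij}=\langle\mathcal D_n(b_ib_i^*),b_jb_j^*\rangle_{\mathrm{HS}},
 \qquad
 c_{ij}=\langle\mathcal D_n(g_ig_i^*),g_jg_j^*\rangle_{\mathrm{HS}}.
\end{equation*}
The powers of two cancel, and we obtain $\mathbb E[S^4]=\sum_{i,j}a_{ij}c_{ij}$. Both $a_{ij}$ and $c_{ij}$ are nonnegative, because $\mathcal D$ (hence $\mathcal D_n$) is a positive map by its Kraus-type form in the proof of \Cref{lem:rank-one-moment}.

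\emph{Bounding the double sum.} Fix $i$ and apply Cauchy--Schwarz in $j$:
\begin{equation*}
 \sum_j a_{ij}c_{ij}\leq\Bigl(\sum_j a_{ij}^2\Bigr)^{1/2}\Bigl(\sum_j c_{ij}^2\Bigr)^{1/2}.
\end{equation*}
The key observation is that the map $M\mapsto(\langle M,b_jb_j^*\rangle_{\mathrm{HS}})_j$ has Hilbert--Schmidt-to-$\ell_2$ norm at most $\norm B^2$. Indeed, its adjoint sends $y$ to $B\diag(y)B^*$, and $\norm{B\diag(y)B^*}_{\mathrm{HS}}\leq\norm B^2|y|_2$. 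Combining this with \Cref{lem:rank-one-moment} and $|b_i|_2\leq\norm B$ gives
\begin{equation*}
 \sum_ja_{ij}^2\leq\norm B^4\norm{\mathcal D_n(b_ib_i^*)}_{\mathrm{HS}}^2\leq\norm B^8(\tau/2)^n.
\end{equation*}
The same argument with $V$ gives $\sum_j c_{ij}^2\leq\norm V^8(\tau/2)^n$. Hence each $i$ contributes at most $(\norm B\norm V)^4(\tau/2)^n$. Summing over the $k$ indices $i$ yields $k(\norm B\norm V)^4\tau^n/q_n$, which is \eqref{eq:square-fourth}.

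\emph{Main obstacle.} The delicate step is this last one: a naive trace bound loses the crucial factor $q_n^{-1}$. The factor is recovered precisely by combining the Hilbert--Schmidt estimate of King's theorem, through \Cref{lem:rank-one-moment}, with the $\norm B^2$ bound on the Gram-type map. A secondary point to check carefully is the normalisation $2^{\pm n}$ in the two moment identities, in particular the passage from the factor $w=\sqrt2u$ in $W(t)$ to the unit vector $u$ in \eqref{eq:circle-moment}.
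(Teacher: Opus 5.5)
Your proposal is correct and follows essentially the paper's proof. It uses the same moment identities from \eqref{eq:circle-moment}, with the factors $q_n^{\pm1}$ cancelling, and the same estimate $\sum_i|\langle A,b_ib_i^*\rangle_{\mathrm{HS}}|^2\leq\norm{B}^4\norm{A}_{\mathrm{HS}}^2$ (you obtain it by duality, while the paper reads it off the diagonal of $B^*AB$), combined with \Cref{lem:rank-one-moment}. The only difference is that you apply Cauchy--Schwarz in $j$ for each fixed $i$, rather than once over all pairs $(i,j)$, which gives the same bound; the nonnegativity of $a_{ij},c_{ij}$ is unnecessary and in any case follows directly from their expressions as fourth moments.
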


\begin{proof}
The covariances $I_{\mathcal H_n}/q_n$ and $I_{\mathcal H_n}$ of $x$ and $W(t)$ give, for $i=1,\ldots,k$,
\begin{equation*}
 \mathbb E_x[|\langle x,g_i\rangle|^2]=q_n^{-1}|g_i|_2^2,
 \qquad \mathbb E_t[|f_{b_i}(t)|^2]=|b_i|_2^2.
\end{equation*}
Since $x$ and $t$ are independent, the expectation of each summand in $S^2$ is the product of these two expectations. Summing over $i=1,\ldots,k$ proves the first identity.

For the fourth-moment bound, let $R_i=b_ib_i^*$ for $i=1,\ldots,k$. For any matrix $A$,
\begin{equation*}
 \sum_{i=1}^k|\langle R_i,A\rangle_{\mathrm{HS}}|^2
 =\sum_{i=1}^k|(B^*AB)_{ii}|^2
 \leq\norm B^4\norm A_{\mathrm{HS}}^2.
\end{equation*}
Applying this inequality with $A=\mathcal D_n(R_j)$, summing over $j=1,\ldots,k$, and using \Cref{lem:rank-one-moment}, we obtain
\begin{equation}\label{eq:column-moments}
 \sum_{i,j=1}^k|\langle R_i,\mathcal D_n(R_j)\rangle_{\mathrm{HS}}|^2
 \leq\norm B^4(\tau/2)^n\sum_{j=1}^k|b_j|_2^4
 \leq k\norm B^8(\tau/2)^n.
\end{equation}
The same estimate holds for $g_ig_i^*$ with $V$ in place of $B$. To relate these estimates to $S$, we apply \eqref{eq:circle-moment} in each tensor factor, which gives
\begin{equation*}
 \mathbb E_x\bigl[|\langle x,g_i\rangle|^2|\langle x,g_j\rangle|^2\bigr]
 =q_n^{-1}\langle g_ig_i^*,
                     \mathcal D_n(g_jg_j^*)\rangle_{\mathrm{HS}}.
\end{equation*}
Since $W(t)$ has the distribution of $\sqrt{q_n}$ times an independent copy of $x$,
\begin{equation*}
 \mathbb E_t\bigl[|f_{b_i}(t)|^2|f_{b_j}(t)|^2\bigr]
 =q_n\langle R_i,\mathcal D_n(R_j)\rangle_{\mathrm{HS}}.
\end{equation*}
Expanding the square of the sum defining $S^2$ and using independence of $x$ and $t$, the preceding identities give
\begin{equation*}
 \begin{aligned}
 \mathbb E[S^4]
 &=\sum_{i,j=1}^k\langle R_i,\mathcal D_n(R_j)\rangle_{\mathrm{HS}}
       \langle g_ig_i^*,\mathcal D_n(g_jg_j^*)\rangle_{\mathrm{HS}}\\
 &\leq\left(\sum_{i,j=1}^k|\langle R_i,\mathcal D_n(R_j)\rangle_{\mathrm{HS}}|^2\right)^{1/2}
       \left(\sum_{i,j=1}^k|\langle g_ig_i^*,\mathcal D_n(g_jg_j^*)\rangle_{\mathrm{HS}}|^2\right)^{1/2}\\
 &\leq k(\norm B\norm V)^4(\tau/2)^n.
 \end{aligned}
\end{equation*}
Here the factors $q_n^{-1}$ and $q_n$ cancel, and the inequalities follow successively from Cauchy--Schwarz and the bounds in \eqref{eq:column-moments} for the columns of $B$ and $V$. Since $q_n=2^n$, this proves \eqref{eq:square-fourth}.
\end{proof}
\subsection{An obstruction for arbitrary bases}\label{sec:overlap}

Let $E=E(n,p)$ and let $H$ be a finite-dimensional Hilbert space. For $z=(x,h)\in E\oplus_2H$, we write $N(z)=N(x,h)=(\norm x_E^2+|h|_2^2)^{1/2}$ for its norm. When we consider the same vector space with the Hilbert norm $|z|_2=(|x|_2^2+|h|_2^2)^{1/2}$, we refer to this as its \emph{coefficient Hilbert structure}, denoted by $\mathcal H_n\oplus_2H$. Here $|x|_2$ is the Euclidean norm of the tensor coefficients of $x$. For a subspace $F\subseteq E\oplus_2H$, let $R_F$ be the orthogonal projection onto $F$ in this Hilbert structure, and let $\iota_E\colon E\to E\oplus_2H$ be the canonical inclusion. We define the \emph{overlap} of $F$ with $E$ by
\begin{equation}\label{eq:overlap-definition}
 \theta(F)=q_n^{-1}\tr(\iota_E^*R_F\iota_E)\in[0,1].
\end{equation}
The quantity $\theta(F)$ measures the overlap of $E$ and $F$ in the coefficient Hilbert structure: it is the average squared length of the orthogonal projections onto $F$ of the vectors in an orthonormal basis of $E$. It equals $1$ precisely when $E\subseteq F$ and $0$ precisely when $E\perp F$.

The next theorem bounds the overlap of $F$ with $E$ in terms of their relative dimensions and the unconditional basis constant of $F$, independently of the dimension of $H$. In the estimate below, we write $L_0=(\gamma_p^2+1)^{1/2}$ and $\beta_p=3^{(p-2)/(2p)}$.

\begin{theorem}\label{thm:overlap}
Let $M\in\N$ and $C\geq1$. Suppose $F\subseteq E\oplus_2H$ has dimension at most $Mq_n$ and a $C$-unconditional basis. Then
\begin{equation*}
 \theta(F)\alpha_{p'}^{-n}
 \leq\beta_p C^{4-4/p}L_0^{(4-p)/p}M^{(p-2)/(2p)}
       \alpha_p^{4n(p-2)/p}\tau^{n(p-2)/(2p)} +C^2L_0.
\end{equation*}
The bound is independent of the dimension of $H$.
\end{theorem}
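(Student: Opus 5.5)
The plan is to test $F$ against random product tensors and estimate the resulting quantity with the square-function moments of \Cref{lem:fourth-moment}.

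\emph{Probabilistic reformulation.} Let $x$ be the tensor product of $n$ independent uniform unit vectors in $\mathbb K^2$. Its covariance is $I_{\mathcal H_n}/q_n$, so
\begin{equation*}
 \theta(F)=\mathbb E_x\langle R_F\iota_Ex,\iota_Ex\rangle.
\end{equation*}
Write $R_F\iota_Ex=(y(x),h(x))\in E\oplus_2H$. Then $\langle R_F\iota_Ex,\iota_Ex\rangle=\langle y(x),x\rangle$. By \Cref{lem:packet-estimates}\ref{it:packet-products} this is at most $\alpha_{p'}^n\norm{y(x)}_E\leq\alpha_{p'}^nN(R_F\iota_Ex)$. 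Hence it suffices to show that $\mathbb E_x[N(R_F\iota_Ex)]$ is bounded by the right-hand side of the theorem.

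\emph{Expansion in the basis.} Let $(z_i)_{i=1}^k$ be the $C$-unconditional basis of $F$, with $k\leq Mq_n$ and $z_i=(b_i,h_i)$. Let $(g_i,k_i)$ be its biorthogonal family in $F$ for the coefficient Hilbert structure. Then
\begin{equation*}
 R_F\iota_Ex=\sum_{i=1}^k\langle x,g_i\rangle z_i.
\end{equation*}
Before using this, I will renormalise the basis by scalars, which does not change $C$. The goal of the renormalisation is to control the Hilbert operator norms $\norm B$ and $\norm V$ of the column maps of $(z_i)$ and $(g_i,k_i)$. The natural tool is a Lozanovskii/John-type balancing: compare the $N$-norm with $|\cdot|_2$ through \Cref{lem:packet-estimates}\ref{it:packet-hilbert}, and use $C$-unconditionality on the synthesis and analysis maps. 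The target is a bound of the form $\norm B\norm V\lesssim C^{a}\alpha_p^{2n}$, with the exponent of $C$ arranged to produce the stated factor $C^{4-4/p}$.

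\emph{Unconditionality and square functions.} Unconditionality gives
\begin{equation*}
 N\Bigl(\sum_i a_iz_i\Bigr)\leq C\,\mathbb E_\eps N\Bigl(\sum_i\eps_ia_iz_i\Bigr),
 \qquad a_i=\langle x,g_i\rangle.
\end{equation*}
The $H$-component of the randomised sum has second moment $\sum_i|a_i|^2|h_i|_2^2$ in $\eps$. For the $E$-component, Khintchine's inequality in $L_p(\Omega^n)$ bounds $\norm{\sum_i\eps_ia_if_{b_i}}_p$ by a constant times $\norm{S(x,\cdot)}_{L_p}$, where $S$ is the square function of \Cref{lem:fourth-moment}. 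The constant $\gamma_p$, and hence $L_0$, should appear at this stage; using Gaussians instead of signs would produce $\gamma_p$ exactly.

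Since $2<p\leq3$, I interpolate via H\"older:
\begin{equation*}
 \norm S_p\leq\norm S_2^{1-\vartheta}\norm S_4^{\vartheta},
 \qquad \vartheta=2-4/p.
\end{equation*}
Then \eqref{eq:square-fourth} with $k\leq Mq_n$ gives $\norm S_4^\vartheta\leq\bigl(M(\norm B\norm V)^4\tau^n\bigr)^{\vartheta/4}$. This produces exactly $M^{(p-2)/(2p)}$ and $\tau^{n(p-2)/(2p)}$, and $(\norm B\norm V)^{\vartheta}$ yields $\alpha_p^{4n(p-2)/p}$ together with a power of $C$. The $L_2$-part, through \eqref{eq:square-second}, and the Hilbert component together are controlled by the biorthogonality $\sum_i\langle z_i,(g_i,k_i)\rangle=k$ and a second-moment computation. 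This should give the $L_0$ factors and the additive term $C^2L_0$. The factor $\beta_p=3^{(p-2)/(2p)}$ comes from comparing Khintchine-type fourth moments, where Gaussian $\mathbb E|g|^4=3$, at the interpolation exponent.

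\emph{Main obstacle.} The delicate step is the normalisation of the basis, namely obtaining simultaneous control of $\norm B$ and $\norm V$ in the coefficient Hilbert structure from $C$-unconditionality in the $N$-norm. This control must be independent of $\dim H$ and produce exactly the stated exponents. I would first try Lozanovskii factorisation for the $1$-unconditional renorming of $F$, choosing scalars so that the analysis and synthesis maps factor through $\ell_2^k$ with constants $C$. I would then pass to Hilbert norms via \Cref{lem:packet-estimates}\ref{it:packet-hilbert}, which costs at most $\alpha_p^n$ per map. The remaining bookkeeping of the H\"older exponents is routine.
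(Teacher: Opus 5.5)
You have the paper's outer architecture: the lower bound $\mathbb E_x[N(R_F\iota_Ex)]\geq\theta(F)\alpha_{p'}^{-n}$, the randomisation $N(z)\leq C\,\mathbb E_\eps[N(D_\eps z)]$ with sign multipliers $D_\eps$, the Rademacher estimate with constant $\beta_p$, and H\"older interpolation between the second and fourth moments of $S$ via \eqref{eq:square-fourth}. What is missing is the step that makes the theorem true. The $L_2$ endpoint of the interpolation, together with the Hilbert component, must be bounded by a constant \emph{independent of} $\alpha_p^n$. The distortion may enter only through the fourth moment, to the power $4(p-2)/p$.

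Your plan to get this from \eqref{eq:square-second} and the biorthogonality $\sum_i\langle z_i,(g_i,k_i)\rangle=k$ cannot work. Biorthogonality only gives lower bounds $|z_i|_2|(g_i,k_i)|_2\geq1$. The Hilbert operator norms give only bounds such as $\mathbb E[S^2]\leq q_n^{-1}k\norm V^2\leq MC^2\alpha_p^{2n}$ when $|z_i|_2=1$. Raised to the power $(4-p)/(2p)\approx1/2$, this contributes a factor of order $\alpha_p^n$. That factor cancels the gain from $\alpha_{p'}^n=c_p^n/\alpha_p^n$ when you isolate $\theta(F)$, so no decay survives.

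The missing argument is short. Put $V_H(x)=\sum_i|\langle x,g_i\rangle|^2|h_i|_2^2$; then $\mathbb E_\eps[|D_\eps R_F\iota_Ex|_2^2]=\mathbb E_t[S(x,t)^2]+V_H(x)$. Both sides are quadratic in the test vector, so you may replace $x$ by a normalised Gaussian $\zeta$. Then $|\cdot|_2\leq N$ and $C$-unconditionality in the norm $N$ give $\mathbb E[S^2]+\mathbb E[V_H]\leq C^2\,\mathbb E_\zeta[N(R_F\iota_E\zeta)^2]\leq C^2(\gamma_p^2+1)=C^2L_0^2$. Here the $E$-component is bounded by \Cref{lem:packet-estimates}\ref{it:packet-gaussian} applied to the Hilbert contraction $\iota_E^*R_F\iota_E$. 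This is where $\gamma_p$ and $L_0$ come from, not from Khintchine, whose constant in the statement is $\beta_p$. The same bound also gives the additive term $C(\mathbb E[V_H])^{1/2}\leq C^2L_0$.

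Conversely, the normalisation you regard as the main obstacle needs no Lozanovskii factorisation. As you phrase it, a factorisation through $\ell_2^k$ with constants depending only on $C$ is in fact impossible in general, e.g.\ for $\ell_1^k$. Instead, normalise $|z_i|_2=1$ and note that $\norm{D_\eps}_{2\to2}\leq C\alpha_p^n$, because $|\cdot|_2\leq N\leq\alpha_p^n|\cdot|_2$. Applying $D_\eps$ in both directions to $\mathbb E_\eps[|\sum_i\eps_ia_iz_i|_2^2]=|a|_2^2$ gives $\norm B,\norm{B^{-1}}\leq C\alpha_p^n$. Since $V=(B^{-1})^*$, this yields $(\norm B\norm V)^4\leq C^8\alpha_p^{8n}$. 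Interpolating between $C^2L_0^2$ and $MC^8\alpha_p^{8n}\tau^n$ and multiplying by $C\beta_p$ then gives exactly the stated first term.
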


\begin{proof}
We may assume $k=\dim F>0$. We first compare the given basis with the coefficient Hilbert structure. After normalising the given $C$-unconditional basis $(b_i)_{i=1}^k$ of $F$ so that $|b_i|_2=1$ for $i=1,\ldots,k$, let $(g_i)_{i=1}^k$ be its Hilbert biorthogonal family in $F$. Let $B\colon\ell_2^k\to F$ be given by $B((a_i)_{i=1}^k)=\sum_{i=1}^k a_i b_i$, and let $D_\eps$ be the basis multiplier with signs $\eps=(\eps_i)_{i=1}^k\in\{-1,1\}^k$. The inequalities $|z|_2\leq N(z)\leq \alpha_p^n|z|_2$ imply $\norm{D_\eps}_{2\to2}\leq C\alpha_p^n$. The normalisation of the basis gives, for $a=(a_i)_{i=1}^k\in\ell_2^k$,
\begin{equation*}
 \mathbb E_\eps\bigl[|B((\eps_i a_i)_{i=1}^k)|_2^2\bigr]=|a|_2^2.
\end{equation*}
Since each sign multiplier is its own inverse, the preceding bound applied in both directions yields
\begin{equation}\label{eq:synthesis-bounds}
 \norm B_{2\to2}\leq C\alpha_p^n,\qquad \norm{B^{-1}}_{2\to2}\leq C\alpha_p^n.
\end{equation}
The corresponding operator for $(g_i)_{i=1}^k$ is $(B^{-1})^*$ and therefore satisfies the same bound.

Let $x$ be a product unit vector as in \Cref{lem:fourth-moment}, and put $z=R_F(x,0)$. By \Cref{lem:packet-estimates}\ref{it:packet-products}, the functional represented by $(x,0)$ has dual norm at most $\alpha_{p'}^n$. Since $R_F$ is an orthogonal projection, this gives
\begin{equation*}
 0\leq\langle R_F(x,0),(x,0)\rangle=|z|_2^2\leq\alpha_{p'}^nN(z).
\end{equation*}
Taking expectations and using $\mathbb E_x[xx^*]=I_{\mathcal H_n}/q_n$ to evaluate the quadratic form, we obtain
\begin{equation}\label{eq:overlap-lower}
 \mathbb E_x[N(z)]\geq\alpha_{p'}^{-n}
    \mathbb E_x\bigl[\langle R_F(x,0),(x,0)\rangle\bigr]
 =\frac{\alpha_{p'}^{-n}}{q_n}\tr(\iota_E^*R_F\iota_E)
 =\theta(F)\alpha_{p'}^{-n}.
\end{equation}

To obtain an upper bound, we randomise the expansion of $z$ in the given basis. For $i=1,\ldots,k$, write $b_i=(b_i^E,b_i^H)$ and $g_i=(g_i^E,g_i^H)$ according to the two summands, and define
\begin{equation*}
 \begin{aligned}
 S_E(x,t)^2&=\sum_{i=1}^k|\langle x,g_i^E\rangle|^2
                                      |f_{b_i^E}(t)|^2,\\
 V_H(x)&=\sum_{i=1}^k|\langle x,g_i^E\rangle|^2|b_i^H|_2^2.
 \end{aligned}
\end{equation*}
Both expressions are quadratic in $x$, so their expectations are unchanged if $x$ is replaced by a normalised Gaussian vector $\zeta$. We obtain
\begin{equation*}
 \mathbb E[S_E^2]+\mathbb E[V_H]
 =\mathbb E_{\zeta,\eps}\bigl[|D_\eps R_F(\zeta,0)|_2^2\bigr]
 \leq C^2\mathbb E_\zeta\bigl[N(R_F(\zeta,0))^2\bigr],
\end{equation*}
where the equality follows by averaging over the independent signs and using $\mathbb E_t[|f_{b_i^E}(t)|^2]=|b_i^E|_2^2$ for $i=1,\ldots,k$. The inequality follows from $|D_\eps u|_2\leq N(D_\eps u)\leq C N(u)$ for $u\in F$, by unconditionality.

The operator $\iota_E^*R_F\iota_E$ is a Hilbert contraction, so \Cref{lem:packet-estimates}\ref{it:packet-gaussian} bounds the expected squared $E$-norm of the first component of $R_F(\zeta,0)$ by $\gamma_p^2$. The Hilbert component contributes at most $\mathbb E[|\zeta|_2^2]=1$. Hence
\begin{equation}\label{eq:good-second-moment}
 \mathbb E[S_E^2]+\mathbb E[V_H]\leq C^2L_0^2.
\end{equation}
Let $B_E,V_E\colon\ell_2^k\to\mathcal H_n$ be the operators with columns $(b_i^E)_{i=1}^k$ and $(g_i^E)_{i=1}^k$, respectively. These are the $E$-components of $B$ and $(B^{-1})^*$. Since the coordinate projection onto $E$ is contractive for the coefficient Hilbert norm, \eqref{eq:synthesis-bounds} gives $\norm{B_E},\norm{V_E}\leq C\alpha_p^n$. Applying \Cref{lem:fourth-moment} to these operators and the product unit vector $x$, and using $k/q_n\leq M$, we obtain
\begin{equation*}
 \mathbb E_{x,t}[S_E^4]
 \leq\frac{k}{q_n}(\norm{B_E}\norm{V_E})^4\tau^n
 \leq M(C\alpha_p^n)^8\tau^n
 =MC^8\alpha_p^{8n}\tau^n.
\end{equation*}
Since $p\in(2,3]$, interpolating between this bound and \eqref{eq:good-second-moment} yields
\begin{equation}\label{eq:interpolated-moment}
 (\mathbb E[S_E^p])^{1/p}
 \leq(C^2L_0^2)^{(4-p)/(2p)}
       \bigl(MC^8\alpha_p^{8n}\tau^n\bigr)^{(p-2)/(2p)}.
\end{equation}
We observe that the Hilbertian distortion $\alpha_p^n$ appears in this bound only to the power $4(p-2)/p$, which tends to zero as $p\downarrow2$. This permits the choice of parameters in \Cref{sec:parameters}, for which the distortion tends to infinity while its contribution $\alpha_p^{4n(p-2)/p}$ to the upper bound tends to one.

Let $(\eps_i)_{i=1}^k$ be independent Rademacher variables. For scalars $(a_i)_{i=1}^k\in\mathbb K^k$, the sum $\sum_{i=1}^k\eps_i a_i$ has second moment $\sum_{i=1}^k|a_i|^2$ and fourth moment at most $3(\sum_{i=1}^k|a_i|^2)^2$, as follows by expansion. Interpolating between these moments gives
\begin{equation*}
 \left(\mathbb E_\eps\left[\left|\sum_{i=1}^k\eps_i a_i\right|^p\right]\right)^{1/p}
 \leq\beta_p\left(\sum_{i=1}^k|a_i|^2\right)^{1/2}.
\end{equation*}
Since $D_\eps^{-1}=D_\eps$, unconditionality implies $N(z)\leq C\mathbb E_\eps[N(D_\eps z)]$. For fixed $x$, write $(u_\eps,h_\eps)=D_\eps z=\sum_{i=1}^k\eps_i\langle x,g_i^E\rangle b_i$. Then
\begin{equation*}
 \mathbb E_\eps[N(D_\eps z)]
 \leq\bigl(\mathbb E_\eps[\norm{u_\eps}_E^p]\bigr)^{1/p}
       +\bigl(\mathbb E_\eps[|h_\eps|_2^2]\bigr)^{1/2}
 \leq\beta_p\bigl(\mathbb E_t[S_E(x,t)^p]\bigr)^{1/p}+V_H(x)^{1/2},
\end{equation*}
where the first inequality follows from $N(u,h)\leq\norm u_E+|h|_2$ and Jensen's inequality. For the second, we apply the preceding scalar estimate pointwise in $t$ and integrate, while $\mathbb E_\eps[|h_\eps|_2^2]=V_H(x)$ gives the Hilbert term. Averaging over $x$ and using Jensen's inequality once more gives
\begin{equation}\label{eq:randomisation-bound}
 \mathbb E_x[N(z)]
 \leq C\left[\beta_p(\mathbb E_{x,t}[S_E^p])^{1/p}
                         +(\mathbb E_x[V_H])^{1/2}\right].
\end{equation}
The estimates \eqref{eq:good-second-moment} and \eqref{eq:interpolated-moment} now give the required upper bound. Comparing this with \eqref{eq:overlap-lower} proves the theorem.
\end{proof}
\subsection{Choosing the finite-dimensional parameters}\label{sec:parameters}

Using the dependence on $\alpha_p^n$ in \eqref{eq:interpolated-moment}, we choose the exponents so that the projection norms tend to one while the obstruction to unconditional bases becomes arbitrarily large. For $n\geq1$, let
\begin{equation}\label{eq:parameters}
 p_n=2+n^{-3/4},\qquad E_n=E(n,p_n).
\end{equation}
To isolate $\theta(F)$ in \Cref{thm:overlap}, we multiply by $\alpha_{p_n'}^n=c_{p_n}^n/\alpha_{p_n}^n$. The resulting bound contains the ratio $\tau^{n(p_n-2)/(2p_n)}/\alpha_{p_n}^n$, so we set
\begin{equation}\label{eq:separation-scale}
 \Lambda_n=\alpha_{p_n}^n\tau^{-n(p_n-2)/(2p_n)}.
\end{equation}
For fixed $C$ and $M$, the remaining factors are bounded, as verified below, so $\Lambda_n^{-1}$ controls the overlap. We have $\Lambda_n\leq \alpha_{p_n}^n$; we shall see that both tend to infinity as $n\to\infty$.

In the real case, evaluating the $L_r$ norm in \eqref{eq:explicit-alpha} by the beta integral gives
\begin{equation*}
 \alpha_r^r=\frac{2^{r/2}\Gamma((r+1)/2)}
                   {\sqrt\pi\Gamma(1+r/2)}.
\end{equation*}
Together with the explicit formula for $\alpha_r$ in the complex case, this yields
\begin{equation*}
 \left.\frac{d}{dr}\log\alpha_r\right|_{r=2}
 =\begin{cases}
 (1-\log2)/4,&\mathbb K=\R,\\
 (\log2-1/2)/4,&\mathbb K=\C.
 \end{cases}
\end{equation*}
Writing $\eps=p-2$, we have $p'=2-\eps+O(\eps^2)$, so the first-order terms cancel in $\log c_p$. For our choice $\eps=p_n-2=n^{-3/4}$, the relations $n\eps=n^{1/4}\to\infty$ and $n\eps^2=n^{-1/2}\to0$ therefore give
\begin{equation}\label{eq:parameter-limits}
 c_{p_n}^n\longrightarrow1,\qquad
 \alpha_{p_n}^n\longrightarrow\infty,\qquad
 \alpha_{p_n}^{4n(p_n-2)/p_n}\longrightarrow1.
\end{equation}
The same expansion applied to $\Lambda_n$ gives
\begin{equation}\label{eq:positive-exponent}
 \log\Lambda_n=\kappa n^{1/4}+O(n^{-1/2}),\qquad
 \kappa=
 \begin{cases}
 (1-\log(5/2))/4,&\mathbb K=\R,\\
 (\log(9/5)-1/2)/4,&\mathbb K=\C.
 \end{cases}
\end{equation}
Both values of $\kappa$ are positive; for the complex value, use $e<3<(9/5)^2$.

We collect the Gaussian, trace and overlap estimates needed for the infinite sum. The last estimate shows that subspaces of dimension at most $3q_n$ with a uniformly bounded unconditional basis constant have overlap with $E_n$ tending to zero, independently of the dimension of the adjoined Hilbert space.

\begin{proposition}\label{prop:finite-obstruction}
For all sufficiently large $n$, independently of $C$ and $H$, the following hold:
\begin{resultparts}
\item\label{it:finite-gaussian}
For a normalised Gaussian vector $\zeta$ in $\mathcal H_n$,
\begin{equation*}
 \mathbb E_\zeta[\norm{\zeta}_{E_n}^2]\leq4.
\end{equation*}
\item\label{it:finite-trace}
For every $T\in\mathscr B(E_n)$,
\begin{equation*}
 \frac{|\tr T|}{q_n}\leq\frac2{\alpha_{p_n}^n}\gamma_2(T).
\end{equation*}
\item\label{it:finite-overlap}
Let $C\geq1$ and let $H$ be a finite-dimensional Hilbert space. If $F\subseteq E_n\oplus_2H$ has dimension at most $3q_n$ and a $C$-unconditional basis, then
\begin{equation*}
 \theta(F)\leq\frac{80C^3}{\Lambda_n}.
\end{equation*}
\end{resultparts}
\end{proposition}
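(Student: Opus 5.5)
All three parts follow by specialising \Cref{lem:packet-estimates}, \Cref{lem:trace-estimate} and \Cref{thm:overlap} to $p=p_n$. The only work is to check that the $n$-dependent constants in those results are bounded, using the limits \eqref{eq:parameter-limits}.

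\emph{Parts \ref{it:finite-gaussian} and \ref{it:finite-trace}.} I first note that $\gamma_p$ is continuous in $p$ with $\gamma_2=1$, since a standard scalar Gaussian has second absolute moment one. Hence $\gamma_{p_n}\to1$ as $p_n\downarrow2$, so $\gamma_{p_n}\leq2$ for large $n$. Part \ref{it:finite-gaussian} then follows from \Cref{lem:packet-estimates}\ref{it:packet-gaussian} with $A=I$, giving $\mathbb E[\norm\zeta_{E_n}^2]\leq\gamma_{p_n}^2\leq4$. Part \ref{it:finite-trace} is \Cref{lem:trace-estimate} with $\gamma_{p_n}\leq2$. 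Neither bound involves $C$ or $H$.

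\emph{Part \ref{it:finite-overlap}.} I apply \Cref{thm:overlap} with $M=3$ and multiply through by $\alpha_{p_n'}^n=c_{p_n}^n/\alpha_{p_n}^n$. The second term becomes $C^2L_0c_{p_n}^n/\alpha_{p_n}^n$. Since $\Lambda_n\leq\alpha_{p_n}^n$ (because $\tau>1$), this is at most $C^2L_0c_{p_n}^n/\Lambda_n$. The first term becomes
\begin{equation*}
 \beta_{p_n}C^{4-4/p_n}L_0^{(4-p_n)/p_n}3^{(p_n-2)/(2p_n)}
 \alpha_{p_n}^{4n(p_n-2)/p_n}c_{p_n}^n\Lambda_n^{-1},
\end{equation*}
using the definition \eqref{eq:separation-scale} of $\Lambda_n$. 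I then bound each factor for large $n$:
\begin{itemize}
\item $C^{4-4/p_n}\leq C^{4-4/3}\leq C^3$, since $p_n\leq3$ and $C\geq1$.
\item $\beta_{p_n}$, $3^{(p_n-2)/(2p_n)}$ and $\alpha_{p_n}^{4n(p_n-2)/p_n}$ all tend to $1$, the last by \eqref{eq:parameter-limits}.
\item $c_{p_n}^n\to1$ by \eqref{eq:parameter-limits}.
\item $L_0=(\gamma_{p_n}^2+1)^{1/2}\to\sqrt2$, and $L_0^{(4-p_n)/p_n}\to\sqrt2$.
\end{itemize}
So for large $n$ the first term is at most, say, $2\sqrt2\,C^3/\Lambda_n\leq 3C^3/\Lambda_n$. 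The second term is at most $2\sqrt5\,C^2/\Lambda_n\leq 5C^3/\Lambda_n$, since $L_0\leq\sqrt5$ and $c_{p_n}^n\leq2$. The total is far below $80C^3/\Lambda_n$.

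The threshold for $n$ depends only on the scalar limits above, so it is independent of $C$ and of $\dim H$. This uses the fact that \Cref{thm:overlap} is itself independent of $\dim H$.

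\emph{Main obstacle.} The step needing care is the bookkeeping that keeps the threshold uniform in $C$: the exponent $4-4/p_n$ varies with $n$, and I handle it by the crude bound $C^{4-4/p_n}\leq C^3$. The generous constant $80$ leaves ample room, so I do not need the sharper asymptotics \eqref{eq:positive-exponent}.
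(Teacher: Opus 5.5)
Your proposal is correct and follows the paper's proof essentially verbatim. Parts (i) and (ii) come from \Cref{lem:packet-estimates} and \Cref{lem:trace-estimate} via $\gamma_{p_n}\to1$, and part (iii) comes from \Cref{thm:overlap} with $M=3$ after multiplying by $c_{p_n}^n/\alpha_{p_n}^n$, using $4-4/p_n\leq3$, $\Lambda_n\leq\alpha_{p_n}^n$ and the limits \eqref{eq:parameter-limits}; your explicit factor-by-factor bounds simply make quantitative the paper's remark that the remaining scalar factors tend to $\sqrt2$.
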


\begin{proof}
\ref{it:finite-gaussian} and \ref{it:finite-trace} follow respectively from \Cref{lem:packet-estimates}\ref{it:packet-gaussian} and \Cref{lem:trace-estimate}, since $\gamma_{p_n}\to1$.

For \ref{it:finite-overlap}, we apply \Cref{thm:overlap} with $M=3$ and multiply by $\alpha_{p_n'}^n=c_{p_n}^n/\alpha_{p_n}^n$. The fourth-moment factor divided by $\alpha_{p_n}^n$ is $1/\Lambda_n$, while the remaining scalar factors in both terms tend to $\sqrt2$ by \eqref{eq:parameter-limits}. Since $4-4/p_n\leq3$, $C\geq1$ and $\Lambda_n\leq \alpha_{p_n}^n$, the estimate follows for all sufficiently large $n$, independently of $C$ and $H$.
\end{proof}

Taking $F=E_n$ and $H=\{0\}$ gives $\theta(F)=1$, hence $u(E_n)\geq(\Lambda_n/80)^{1/3}\to\infty$. This estimate concerns bases of the individual spaces. An unconditional basis of an infinite sum of these spaces need not have subsequences spanning the summands, so this estimate alone does not exclude such a basis. The overlap bound in \Cref{prop:finite-obstruction}\ref{it:finite-overlap} also applies to subspaces of $E_n\oplus_2H$ that need not contain $E_n$. The selection principle in \Cref{lem:finite-selection} will select a small coordinate subspace from a proposed basis and, after replacing its complementary components by vectors in a Hilbert space, give a lower bound on its overlap with $E_n$. The recursive choice of summands then makes the two overlap bounds incompatible.
\subsection{An \texorpdfstring{$\ell_2$}{ell2}-sum without DPR-lust}
\label{sec:infinite-sum}

We now use these estimates to construct a space without DPR-lust. The main step is the finite selection principle of \Cref{lem:finite-selection}, which, starting from an unconditional basis, selects a finite-dimensional coordinate subspace and gives a lower bound for its overlap with a prescribed summand. Together with \Cref{prop:finite-obstruction}, this forces every unconditional basis of a finite-dimensional superspace of that summand to have a large constant.

\subsubsection{Local Hilbert approximation}

The next lemma shows that subspaces of fixed dimension in $\ell_2$-sums of subspaces of $L_r$ become uniformly close to Hilbert space as the exponents approach two. This remains true for subspaces of quotients of these sums.

\begin{lemma}\label{lem:local-hilbert}
For every integer $k\geq1$ and $D>1$ there is $\varepsilon(k,D)>0$ with the following property. Let $A$ be a nonempty index set. For each $\alpha\in A$, let $r_\alpha\in[2,2+\varepsilon(k,D)]$ and let $X_\alpha$ be a closed subspace of an $L_{r_\alpha}$ space, and set
\begin{equation*}
    S=\left(\bigoplus_{\alpha\in A}X_{\alpha}\right)_2.
\end{equation*}
Then every subspace of dimension at most $k$ of every quotient of $S$ is $D$-Hilbertian.
\end{lemma}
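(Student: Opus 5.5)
The plan is to use the standard type/cotype (Kwapień) route, keeping track of constants. First I would show that $S$ has type $2$ constant $T_2(S)\leq 1$ and cotype $2$ constant $C_2(S)\leq 1+\delta$, where $\delta\to0$ as $r=\sup_\alpha r_\alpha\downarrow2$. For type: every $L_r$ with $r\geq2$ has Rademacher type $2$, with constant $\gamma_r$-type bounds coming from Khintchine/Kahane in $L_r$ (pointwise Khintchine with constant $B_r\to1$ as $r\to2$, then Minkowski in $L_{r/2}$). The $\ell_2$-sum preserves the type $2$ constant, since one computes $\mathbb E\|\sum\eps_i x_i\|^2$ coordinatewise and sums. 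For cotype: $L_r$ has cotype $r$, not cotype $2$, so a naive cotype-$2$ bound is false. Instead, I would work with the \emph{local} constants on at most $k$ vectors. For $k$ vectors, Hölder gives $C_2^{(k)}\leq k^{1/2-1/r}C_r$, and $C_r(L_r)\to1$. Both factors tend to one as $r\to2$ with $k$ fixed. The same estimates pass to $\ell_2$-sums by Minkowski.

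Next I pass to quotients and subspaces. Subspaces inherit type and cotype constants directly. For a quotient $S/W$, type $2$ passes to quotients with the same constant by lifting vectors. Cotype does not pass to quotients in general, so I would dualise instead. The dual of a quotient is a subspace of $S^*=(\bigoplus X_\alpha^*)_2$, and each $X_\alpha^*$ is a quotient of $L_{r_\alpha'}$. Now $L_{r'}$ with $r'\leq2$ has cotype $2$ with constant tending to $1$, and type $r'$, whose $k$-vector type-$2$ constant is at most $k^{1/r'-1/2}$, again tending to $1$. Cotype does not pass to quotients either, so the cleaner approach is to avoid cotype entirely and use a two-sided type argument. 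I use type $2$ of the quotient $Q=S/W$, and type $2$ of $Q^*\subseteq S^*$ in its local $k$-vector form, which passes to subspaces. The local type of $L_{r'}$ is controlled as above, and local type $2$ of $\ell_2$-sums is controlled by the coordinatewise computation.

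Finally I apply Kwapień's theorem in its quantitative local form. If a $k$-dimensional space $G$ has $T_2^{(k)}(G)\,T_2^{(k)}(G^*)\leq 1+\eta$, then $d_{\mathrm{BM}}(G,\ell_2^k)\leq f(\eta)$ with $f(\eta)\to1$ as $\eta\to0$. For a subspace $G$ of dimension $k$ of $Q$, we have $T_2(G)\leq T_2(Q)$. Also $G^*$ is a quotient of $Q^*$, so its type-$2$ constant is at most that of $Q^*$, since type passes to quotients. This gives both bounds for $G$ itself.

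The main obstacle is this last step. The standard Kwapień argument only yields $d_{\mathrm{BM}}\leq T_2C_2$, so it never gives distortion near $1$ unless the constants are near $1$ with a quantitative stability statement. I would instead use a direct averaging approach. Choose the John (or Lewis) ellipsoid norm $|\cdot|_H$ and compare it to $\|\cdot\|$ via Gaussian averages over an orthonormal basis. Type $2$ with constant $1+\eta$ on $k$ vectors bounds $\mathbb E\|\sum g_ie_i\|$ from above, and the dual type bound gives a lower bound. Combining these through the $\ell$-norm/trace duality $\ell(u)\ell^*(u^{-1})\geq k$ gives near-equality, so the norm is close to Euclidean on average. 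Converting "on average" into "for every $x$" in fixed dimension $k$ is done by compactness. Vectors where the norm deviates would, through rotation invariance of Gaussians and finite dimensionality, contradict near-equality in the type inequality. If this becomes messy, I would fall back to a compactness-and-contradiction argument. Take a sequence of counterexamples with $r\to2$, pass to a limit $k$-dimensional space (Banach–Mazur compactum), and show the limit satisfies the parallelogram law. The law holds because the type-$2$ inequality with constant $1$ for two vectors, applied in both $G$ and $G^*$, forces $\|x+y\|^2+\|x-y\|^2=2\|x\|^2+2\|y\|^2$.
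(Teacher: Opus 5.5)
Your proposal is correct once it is reduced to its compactness fallback, and that fallback finishes the proof differently from the paper. What you actually use is the two-vector case of type $2$, namely $\tfrac12(\norm{x+y}^2+\norm{x-y}^2)\le c^2(\norm x^2+\norm y^2)$ with $c=c(r)\to1$. (Your ``$T_2(S)\leq1$'' should read $T_2(S)\le\sup_\alpha B_{r_\alpha}$; for two vectors Clarkson gives exactly $c=2^{1/2-1/r}$.) You then need:
\begin{itemize}
\item stability of this inequality under $\ell_2$-sums, subspaces, and quotients (by lifting);
\item compactness of the Banach--Mazur compactum of $d$-dimensional spaces, with $d\le k$ fixed along a subsequence;
\item lower semicontinuity $T(G')\le d_{\mathrm{BM}}(G,G')\,T(G)$ of the two-vector constant;
\item the Jordan--von Neumann theorem for the limit space.
\end{itemize}
Your lifting step is equivalent to the paper's normalised Hadamard map on $(S/N)\oplus_2(S/N)$. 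The difference lies in the ending. The paper applies Passer's approximate Jordan--von Neumann theorem (in real dimension at most $2k$ over $\C$) and then averages $|x|_0^2$ and $|ix|_0^2$. Your compactness argument is self-contained and treats $\R$ and $\C$ at once. It gives no explicit $\varepsilon(k,D)$, but neither the lemma nor the recursion in the paper needs one.

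Much of the rest can be discarded. The cotype discussion and the whole dualisation through $Q^*\subseteq S^*$ and $L_{r'}$ are unnecessary. Applying the upper inequality to the pair $(x+y,x-y)$ gives $\norm x^2+\norm y^2\le c^2\cdot\tfrac12(\norm{x+y}^2+\norm{x-y}^2)$. So type for two vectors in $G$ alone forces the parallelogram law in the limit; this is precisely the remark the paper makes after Clarkson's inequality. Your dual route is not wrong, but it tacitly needs $C_2^{(2)}(G)\le T_2^{(2)}(G^*)$, obtained via norming functionals. The ``direct averaging'' paragraph with John ellipsoids and Gaussian averages is not a proof as written. The quantitative local Kwapie\'n statement you invoke is justified only by the compactness argument itself.
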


\begin{proof}
For $r\geq2$ and $x,y\in L_r$, Clarkson's computation of the von Neumann--Jordan constant \cite{Clarkson1937} gives
\begin{equation}\label{eq:approximate-parallelogram}
    \nu_r^{-1}(\norm{x}_r^2+\norm{y}_r^2)
    \leq\frac{\norm{x+y}_r^2+\norm{x-y}_r^2}{2}
    \leq \nu_r(\norm{x}_r^2+\norm{y}_r^2),
    \qquad \nu_r=2^{1-2/r}.
\end{equation}
Here the left inequality follows by applying the right one to $x+y$ and $x-y$. Summing squared norms shows that the same inequalities hold in $S$, with constant $\nu=\sup_{\alpha\in A}\nu_{r_\alpha}$.

To obtain the same inequalities in quotients of $S$, we use the \emph{normalised Hadamard map} on $S\oplus_2 S$,
\begin{equation*}
 \mathsf H(x,y)=2^{-1/2}(x+y,x-y),
\end{equation*}
which has norm at most $\sqrt\nu$ and preserves $N\oplus_2N$ for every closed subspace $N\subseteq S$. It induces the same formula, with no larger norm, on $(S/N)\oplus_2(S/N)$. The induced map squares to $I_{(S/N)\oplus_2(S/N)}$, so both inequalities follow. They pass to subspaces as well, and their constants tend to one as all the exponents tend to two.

Passer's approximate Jordan--von Neumann theorem \cite[Theorem~3.4]{Passer2015} states that, in each fixed real dimension, the Banach--Mazur distance to Hilbert space tends to one as the constant in \eqref{eq:approximate-parallelogram} tends to one. Over $\R$, applying this result in dimensions $2,\ldots,k$, and observing that the one-dimensional case is automatic, proves the result. Over $\C$, we apply the same theorem in real dimensions at most $2k$. It gives a real Hilbert norm $|\cdot|_0$ satisfying $|x|_0\leq\norm x\leq D|x|_0$. The averaged norm
\begin{equation*}
 |x|_H^2=\frac{|x|_0^2+|ix|_0^2}{2}
\end{equation*}
satisfies the same comparison and is invariant under multiplication by $i$, so it comes from a complex inner product.
\end{proof}

Before we move on, we observe that the lemma applies in particular to subspaces of $S$ itself of dimension at most $k$. Moreover, the choice of $\varepsilon(k,D)$ may depend on the scalar field.

\subsubsection{A finite selection principle}

For the selection lemma, let $E$ be a $q$-dimensional Banach space with a specified Hilbert norm $|\cdot|_2$ satisfying $|x|_2\leq\norm{x}_E$. We use the same notion of overlap as in \Cref{sec:overlap}: if $H$ is a finite-dimensional Hilbert space and $F\subseteq E\oplus_2H$, we write
\begin{equation*}
    \theta_E(F)=\frac1q\tr(\iota_E^*R_F\iota_E),
\end{equation*}
where $R_F$ is the orthogonal projection in the coefficient Hilbert structure and $\iota_E$ is the inclusion of the first summand.

The next lemma is the key step in passing from the finite-dimensional overlap estimate to failure of DPR-lust. Under its hypotheses, it selects at most $3q$ vectors from an arbitrary unconditional basis and, after a Hilbertian replacement of their complementary components, produces a subspace of $E\oplus_2H$ with a controlled unconditional basis constant and a quantitative lower bound on its overlap with $E$. In \Cref{thm:separated}, comparison with the upper bound in \Cref{prop:finite-obstruction}\ref{it:finite-overlap} forces the unconditional basis constants of all finite-dimensional superspaces of the chosen summands to grow. This is what allows us to rule out DPR-lust even though these bases need not respect the given decomposition.

\Needspace{13\baselineskip}
\begin{samepage}
\begin{lemma}\label{lem:finite-selection}\label{prop:selection}
Suppose that $Y=E\oplus_2W$ has a $C$-unconditional basis and that every subspace of $W$ of dimension at most $3q$ is $D$-Hilbertian. Suppose that there are constants $a,L>0$ such that
\begin{equation}\label{eq:selection-hypotheses}
    \frac{|\tr T|}{q}\leq a\gamma_2(T)
       \quad(T\in\mathscr B(E)),
    \qquad \mathbb E[\norm{\zeta}_E^2]\leq L^2,
\end{equation}
where $\zeta$ is the normalised Gaussian vector in $E$. If $\eta=aDC^2\leq1/8$, there are a finite-dimensional Hilbert space $H$ and a subspace $F'\subseteq E\oplus_2H$ such that
\begin{resultparts}
\item\label{it:selection-dimension} $\dim F'\leq3q$.
\item\label{it:selection-basis} $F'$ has a $DC$-unconditional basis.
\item\label{it:selection-overlap} $\displaystyle\theta_E(F')\geq\frac{(1-4\eta)^2}{C^2L^2}$.
\end{resultparts}
\end{lemma}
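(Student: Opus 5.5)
The plan is to select coordinates using the trace of compressions to $E$, and then to Hilbertise the $W$-components. Let $(b_i)$ be the given $C$-unconditional basis of $Y$ with coordinate functionals $(b_i^*)$, and let $\pi_E,\pi_W$ and $\iota_E,\iota_W$ be the canonical projections and inclusions. For $M$ a set of indices, let $P_M=\sum_{i\in M}b_i\otimes b_i^*$, so $\norm{P_M}\leq C$, and set
\begin{equation*}
 T_M=\pi_EP_M\iota_E\in\mathscr B(E),\qquad t_i=b_i^*(\iota_E\pi_Eb_i),
\end{equation*}
so that $\tr T_M=\sum_{i\in M}t_i$ and $\tr T_{\mathrm{all}}=q$. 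The first step is an algebraic identity showing that every $T_M$ is almost idempotent in trace. Since $P_MP_{M^c}=0$ and $I_Y=\iota_E\pi_E+\iota_W\pi_W$, we get
\begin{equation*}
 T_M-T_M^2=\pi_EP_M\iota_E\pi_EP_{M^c}\iota_E=-\pi_EP_M\iota_W\,\pi_WP_{M^c}\iota_E .
\end{equation*}
This operator factors through the subspace $\pi_WP_{M^c}\iota_E(E)\subseteq W$, which has dimension at most $q$ and is therefore $D$-Hilbertian. Hence $\gamma_2(T_M-T_M^2)\leq DC^2$. The trace hypothesis in \eqref{eq:selection-hypotheses} then gives, for every index set $M$,
\begin{equation*}
 \bigl|\tr T_M-\tr(T_M^2)\bigr|\leq\eta q .
\end{equation*}

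The second step, which I expect to be the main obstacle, is the selection itself: find $M$ with $|M|\leq3q$ and $\tr T_M\geq(1-4\eta)q$. Here $\tr T_M=\sum_{i\in M}t_i$ is additive, while $\tr(T_M^2)=\sum_{i,j\in M}s_{ij}$ is quadratic, with $s_{ij}=b_i^*(\pi_Eb_j)\,b_j^*(\pi_Eb_i)$. The near-equality above holds simultaneously for all $M$, and I would exploit it as follows.
\begin{enumerate}[label=(\alph*)]
\item Apply it to singletons and pairs, and then to random sets (each index kept independently with probability $1/2$, or a random signing $\sum\theta_iP_{\{i\}}$). Comparing expectations should show that the diagonal and off-diagonal parts of $(s_{ij})$ are controlled: the $t_i$ behave like numbers close to $0$ or $1$ "on average", up to an error of order $\eta q$.
\item Take $M=\{i:t_i\geq1/3\}$, or the set of indices with the largest $t_i$. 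A Markov-type count using $\sum_it_i^2\lesssim\sum_i s_{ii}\lesssim q+\eta q$ should give $|M|\leq3q$.
\item Control the discarded trace $\sum_{i\notin M}t_i$ by $3\eta q$, using the near-idempotence applied to $M^c$. This uses that $T_{M^c}=I_E-T_M$, so $T_{M^c}-T_{M^c}^2=T_M-T_M^2$.
\end{enumerate}
The constant $4$ in $1-4\eta$ and the threshold $\eta\leq1/8$ suggest a short argument of exactly this kind, with the smallness of $\eta$ used to absorb the quadratic error terms. This is the step I would work out first and expect to need the most care.

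The third step is the Hilbertian replacement. With $M$ chosen, set $G=\operatorname{span}\{\pi_Wb_i:i\in M\}\subseteq W$, which has dimension at most $3q$. By hypothesis there is a Hilbert norm on $G$, giving a Hilbert space $H$ and a map $J\colon G\to H$ with $\norm J\leq1$ and $\norm{J^{-1}}\leq D$. Put $F'=\operatorname{span}\{(\pi_Eb_i,J\pi_Wb_i):i\in M\}\subseteq E\oplus_2H$. This gives \ref{it:selection-dimension}. Comparing $N$-norms with the norm of $Y$ through $I_E\oplus J$ in each direction shows that the vectors $(\pi_Eb_i,J\pi_Wb_i)$ form a $DC$-unconditional basis of $F'$, which gives \ref{it:selection-basis}. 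Moreover $T_M$ is unchanged, because its $E$-component is untouched.

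Finally, for the overlap \ref{it:selection-overlap}, let $\zeta$ be the normalised Gaussian vector in $E$ and let $u=P'_M(\zeta,0)\in F'$, where $P'_M$ is the transported coordinate projection. Then $\pi_Eu=T_M\zeta$, and $\mathbb E\langle T_M\zeta,\zeta\rangle=\tr T_M/q$. Since $u\in F'$, we have $\langle u,(\zeta,0)\rangle=\langle u,R_{F'}(\zeta,0)\rangle$. Cauchy--Schwarz then gives
\begin{equation*}
 \frac{\tr T_M}{q}\leq\bigl(\mathbb E|u|_2^2\bigr)^{1/2}\bigl(\mathbb E|R_{F'}(\zeta,0)|_2^2\bigr)^{1/2}=\bigl(\mathbb E|u|_2^2\bigr)^{1/2}\theta_E(F')^{1/2}.
 \end{equation*}
To bound the first factor, use $|u|_2\leq N(u)$ and $N(u)\lesssim C\norm{\zeta}_E$, where the implied comparison constant should be absorbed by the normalisation $|\cdot|_2\leq N$. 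Together with $\mathbb E\norm{\zeta}_E^2\leq L^2$, this gives $\mathbb E|u|_2^2\leq C^2L^2$. Combined with $\tr T_M\geq(1-4\eta)q$, this yields $\theta_E(F')\geq(1-4\eta)^2/(C^2L^2)$.
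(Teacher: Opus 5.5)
\textbf{Verdict: genuine gap in Step 2.} Your Step 1 identity is correct. Your Step 3 and the final Cauchy--Schwarz estimate match the paper's argument. In that last step, $N(u)\le C\norm{\zeta}_E$ holds exactly because $I_E\oplus J$ is a contraction, so no constant needs absorbing.

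The gap is Step 2, which is the real content of the lemma. You leave it as a sketch, and its concrete ingredients do not work.
\begin{itemize}
\item In (c), near-idempotence applied to $M^c$ cannot bound $\tr T_{M^c}$. As you observe yourself, it is literally the same statement as for $M$. Smallness of $\tr T-\tr(T^2)$ says nothing about $\tr T$: a rank-$r$ projection has $\tr T-\tr(T^2)=0$ and $\tr T=r$.
\item In (b), $s_{ii}=t_i^2$ exactly. A Markov count from $\sum_it_i^2\approx q$ at threshold $1/3$ gives $|M|\le 9(1+\eta)q$, not $3q$. Using $\sum_it_i=q$ instead needs control of negative (over $\mathbb C$, non-real) $t_i$, which you have not obtained.
\item More fundamentally, your set identity carries the off-diagonal terms $s_{ij}$. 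Averaging it over random subsets $M\subseteq J$ and comparing with $M=J$ yields only $\bigl|\sum_{i\in J}(t_i-t_i^2)\bigr|\le c\,\eta q$ with $c$ about $5$. After selection this leaves constants far worse than $1-4\eta$, and nothing usable for $\eta$ near $1/8$.
\end{itemize}

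\textbf{The missing idea} is to decouple with random signs, which removes the off-diagonal terms exactly. Your parenthetical ``random signing'' in (a) points this way. However, the set identity does not apply to $\sum_i\theta_iP_{\{i\}}$, since that operator is not idempotent, so you need a different identity.

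For finite $J$, signs $\varepsilon_i$ and scalars $|w_i|\le1$, put $Q_i=b_i\otimes b_i^*$ and
\begin{itemize}
\item $M_\varepsilon=\sum_{i\in J}\varepsilon_iQ_i$ and $N_\varepsilon=\sum_{i\in J}\varepsilon_iw_iQ_i$.
\end{itemize}
Both have norm at most $C$, and $M_\varepsilon N_\varepsilon=\sum_{i\in J}w_iQ_i$ for every $\varepsilon$. Let $A_i=\pi_EQ_i\iota_E$, and let $T_\varepsilon,S_\varepsilon$ be the compressions of $M_\varepsilon,N_\varepsilon$ to $E$. Your block decomposition then gives $\sum_{i\in J}w_iA_i=T_\varepsilon S_\varepsilon+B_\varepsilon V_\varepsilon$, where $B_\varepsilon V_\varepsilon=\pi_EM_\varepsilon\iota_W\pi_WN_\varepsilon\iota_E$. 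This operator factors through a subspace of $W$ of dimension at most $q$, so $\gamma_2(B_\varepsilon V_\varepsilon)\le DC^2$.

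Since each $A_i$ has rank one, $\mathbb E_\varepsilon\tr(T_\varepsilon S_\varepsilon)=\sum_{i\in J}w_i\tr(A_i^2)=\sum_{i\in J}w_it_i^2$. The trace hypothesis therefore gives $\bigl|\sum_{i\in J}w_i(t_i-t_i^2)\bigr|\le\eta q$. Choosing the $w_i$ as suitable phases gives $\sum_i|t_i-t_i^2|\le2\eta q$.

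Now select $S=\{i:|t_i-1|\le1/2\}$ rather than $\{t_i\ge1/3\}$.
\begin{itemize}
\item Off $S$ one has $|t_i|\le2|t_i-t_i^2|$. So the discarded trace is at most $4\eta q$, and $\operatorname{Re}\sum_{i\in S}t_i\ge(1-4\eta)q$.
\item On $S$ one has $\operatorname{Re}t_i\ge1/2$. So $|S|\le2(1+4\eta)q\le3q$ when $\eta\le1/8$.
\end{itemize}
This is where the constants in the statement come from.
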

\end{samepage}

\begin{proof}
Let $(z_i,z_i^*)_{i\in I}$ be the basis and its coordinate functionals, where $I=\mathbb N=\{1,2,\ldots\}$ in the infinite-dimensional case and $I=\{1,\ldots,m\}$ if $\dim Y=m<\infty$. Let $\iota_E\colon E\to Y$ be the canonical inclusion, and let $P_E\colon Y\to E$ and $P_W\colon Y\to W$ be the coordinate projections associated with $Y=E\oplus_2W$. Put
\begin{equation*}
    Q_i=z_i\otimes z_i^*,\qquad
    A_i=P_EQ_i\iota_E,\qquad t_i=\tr A_i\quad(i\in I).
\end{equation*}
Each $A_i$ has rank at most one, so $\tr(A_i^2)=t_i^2$. The partial sums of the $Q_i$ converge to the identity uniformly on the unit ball of the finite-dimensional space $\iota_E(E)$. Thus $\sum_{i\in I}t_i=q$.

For a finite set $J\subseteq I$, scalars $(w_i)_{i\in J}$ with $|w_i|\leq1$ for $i\in J$, and independent Rademacher signs $(\varepsilon_i)_{i\in J}$, put
\begin{equation*}
 M_\varepsilon=\sum_{i\in J}\varepsilon_iQ_i,
 \qquad
 N_\varepsilon=\sum_{i\in J}\varepsilon_iw_iQ_i.
\end{equation*}
Both operators have norm at most $C$. Write $T_\varepsilon,S_\varepsilon$ for their compressions to $E$, and $B_\varepsilon,V_\varepsilon$ for the upper-right block of $M_\varepsilon$ and the lower-left block of $N_\varepsilon$, respectively. Since $Q_iQ_k=\delta_{ik}Q_i$,
\begin{equation*}
 T_\varepsilon S_\varepsilon+B_\varepsilon V_\varepsilon
   =\sum_{i\in J}w_iA_i.
\end{equation*}
The range of $V_\varepsilon$ has dimension at most $q$ and is $D$-Hilbertian, so $\gamma_2(B_\varepsilon V_\varepsilon)\leq DC^2$. Taking traces and averaging over the signs therefore gives
\begin{equation*}
 \left|\sum_{i\in J}w_i(t_i-t_i^2)\right|
 =\left|\mathbb E[\tr(B_\varepsilon V_\varepsilon)]\right|
 \leq\eta q.
\end{equation*}
Choosing $w_i$ so that $w_i(t_i-t_i^2)=|t_i-t_i^2|$ and passing to increasing finite sets yields, in particular,
\begin{equation}\label{eq:trace-defect-bound}
    \sum_{i\in I}|t_i-t_i^2|\leq2\eta q.
\end{equation}

Set
\begin{equation*}
    S=\{i\in I:|t_i-1|\leq1/2\},\qquad
    Q_S=\sum_{i\in S}Q_i,\qquad F=Q_SY.
\end{equation*}
The set $S$ is finite, since $t_i\to0$ when $I=\mathbb N$. Outside $S$, the inequality $|t_i|\leq2|t_i-t_i^2|$ holds. Hence
\begin{equation}\label{eq:selected-trace}
    (1-4\eta)q\leq\operatorname{Re}\sum_{i\in S}t_i\leq(1+4\eta)q.
\end{equation}
For every $i\in S$, we have $\operatorname{Re}t_i\geq1/2$, so $|S|\leq2(1+4\eta)q\leq3q$. Moreover, $\norm{Q_S}\leq C$, and the selected vectors $(z_i)_{i\in S}$ form a $C$-unconditional basis of $F$.

It remains to replace the $W$-component of $F$ by a Hilbert space. Since $\dim P_WF\leq3q$, there is an isomorphism $J\colon P_WF\to H$ onto a Hilbert space such that $\norm J\leq1$ and $\norm{J^{-1}}\leq D$. Define $U\colon E\oplus_2P_WF\to E\oplus_2H$ by $U(x,w)=(x,Jw)$ and let $F'=UF$. This space has the same dimension as $F$, which proves \ref{it:selection-dimension}. The image under $U$ of the selected basis is $DC$-unconditional, proving \ref{it:selection-basis}. To prove \ref{it:selection-overlap}, define
\begin{equation*}
    T_0=UQ_S\iota_E\colon E\longrightarrow F'.
\end{equation*}
Write $\iota_E$ also for the inclusion into $E\oplus_2H$. The norm of $T_0\colon E\to F'$ is at most $C$, and $U$ fixes the $E$ component, so \eqref{eq:selected-trace} yields
\begin{equation*}
    \operatorname{Re}\tr(\iota_E^*T_0)\geq(1-4\eta)q.
\end{equation*}
The coefficient Hilbert norm on $E\oplus_2H$ is bounded above by its Banach norm. Therefore
\begin{equation*}
    \norm{T_0}_{\mathrm{HS}}^2
       =q\,\mathbb E[|T_0\zeta|_2^2]\leq qC^2L^2.
\end{equation*}
Let $R$ be the orthogonal projection onto $F'$. As $RT_0=T_0$, Cauchy--Schwarz for the Hilbert--Schmidt inner product gives
\begin{equation*}
    (1-4\eta)q
       \leq\operatorname{Re}\langle T_0,R\iota_E\rangle_{\mathrm{HS}}
       \leq\norm{T_0}_{\mathrm{HS}}\norm{R\iota_E}_{\mathrm{HS}}
       \leq CLq\,\theta_E(F')^{1/2}.
\end{equation*}
This proves \ref{it:selection-overlap} and completes the proof.
\end{proof}

\subsubsection{Choosing the summands}\label{sec:choosing-summands}

We return to the spaces $E_n\subseteq L_{p_n}$ and the constants $\Lambda_n$ from \Cref{prop:finite-obstruction}, with $q_n=\dim E_n$. Recall that
\begin{equation*}
    p_n\downarrow2,\qquad
    \Lambda_n=\alpha_{p_n}^n\tau^{-n(p_n-2)/(2p_n)}\longrightarrow\infty,
    \qquad \Lambda_n\leq \alpha_{p_n}^n.
\end{equation*}
We shall recursively choose a strictly increasing sequence $(n_j)_{j=1}^{\infty}$ of integers, all sufficiently large for the conclusions of \Cref{prop:finite-obstruction} to hold. To simplify notation, write
\begin{equation*}
    E_j=E_{n_j},\quad p_j=p_{n_j},\quad q_j=q_{n_j},\quad
    \Lambda_j=\Lambda_{n_j}\qquad(j\geq1),
\end{equation*}
and put $D_1=2$ and $D_j=\max\{2,\alpha_{p_{j-1}}^{n_{j-1}}\}$ for $j\geq2$. At each step, we also arrange that
\begin{equation}\label{eq:recursive-separation}
    \Lambda_j\geq D_j^8,
    \qquad p_j-2\leq\varepsilon(3q_i,2)\quad(1\leq i<j).
\end{equation}
Once $n_1,\ldots,n_{j-1}$ have been chosen, $D_j$ and the finitely many values $\varepsilon(3q_i,2)$ for $1\leq i<j$ are fixed. Since $p_n\to2$ and $\Lambda_n\to\infty$, these conditions can be satisfied by taking $n_j$ sufficiently large. These choices also make $(\alpha_{p_j}^{n_j})_{j=1}^{\infty}$ increasing, because $\alpha_{p_j}^{n_j}\geq\Lambda_j\geq D_j^8$.

Set
\begin{equation}\label{eq:constructed-range}
    Y=\left(\bigoplus_{j=1}^{\infty}E_j\right)_2,
    \qquad W_j=\left(\bigoplus_{\substack{i=1\\i\ne j}}^{\infty}E_i\right)_2
    \quad(j\geq1).
\end{equation}
Every subspace $V\subseteq W_j$ of dimension at most $3q_j$ is $D_j$-Hilbertian. Indeed, its projection onto the later summands is $2$-Hilbertian by \Cref{lem:local-hilbert} and \eqref{eq:recursive-separation}. For $v\in V$, write $v=v_-+v_+$, where $v_-$ and $v_+$ are its coordinate projections onto $(\bigoplus_{i=1}^{j-1}E_i)_2$ and $(\bigoplus_{i=j+1}^{\infty}E_i)_2$, respectively. Writing $v_-=(v_i)_{i=1}^{j-1}$, we use the coefficient Hilbert norms of \Cref{sec:hilbert-norms} to define $|v_-|_2^2=\sum_{i=1}^{j-1}|v_i|_2^2$. By \Cref{lem:packet-estimates},
\begin{equation*}
    |v_-|_2\leq\norm{v_-}\leq D_j|v_-|_2.
\end{equation*}
Choose a Hilbert norm $|\cdot|_+$ on the later projection of $V$ such that $|v_+|_+\leq\norm{v_+}\leq2|v_+|_+$, and define a Hilbert norm on $V$ by
\begin{equation*}
    |v|_H^2=|v_-|_2^2+|v_+|_+^2.
\end{equation*}
Since $D_j\geq2$, this norm satisfies $|v|_H\leq\norm v\leq D_j|v|_H$ for every $v\in V$.

The next theorem compares the lower overlap bound from \Cref{lem:finite-selection}\ref{it:selection-overlap} with the upper bound in \Cref{prop:finite-obstruction}\ref{it:finite-overlap}. The recursive choice of summands forces unconditional basis constants to grow with $j$ in every closed subspace of $Y$ containing $E_j$, thereby ruling out both DPR-lust and an unconditional basis of $Y$.

\begin{theorem}\label{thm:no-unconditional}\label{thm:separated}
Let $Y$ be the space in \eqref{eq:constructed-range}. For every $j\geq1$, if $F$ is a closed subspace with $E_j\subseteq F\subseteq Y$, then every unconditional basis of $F$ has constant greater than $\Lambda_j^{1/8}/5$. In particular,
\begin{equation*}
 \inf\{u(F): E_j\subseteq F\subseteq Y,\ \dim F<\infty\}
 \geq \frac{\Lambda_j^{1/8}}5
 \longrightarrow\infty\qquad(j\to\infty).
\end{equation*}
Consequently, $Y$ has neither DPR-lust nor an unconditional basis.
\end{theorem}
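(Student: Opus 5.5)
Fix $j$ and a closed subspace $F$ with $E_j\subseteq F\subseteq Y$, and suppose $F$ has a $C$-unconditional basis with $C\leq\Lambda_j^{1/8}/5$. The plan is to apply \Cref{lem:finite-selection} to $F$, not to $Y$, and then derive a contradiction from \Cref{prop:finite-obstruction}\ref{it:finite-overlap}.

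\emph{Setting up the decomposition.} Let $P_j$ be the coordinate projection of $Y$ onto $E_j$. Since $E_j\subseteq F$, it maps $F$ onto $E_j$, so $F=E_j\oplus_2 W$ isometrically, where $W=F\cap W_j$. Every subspace of $W$ of dimension at most $3q_j$ is a subspace of $W_j$, and hence $D_j$-Hilbertian by the discussion preceding the theorem. Take $E=E_j$, $q=q_j$, $D=D_j$, $a=2/\alpha_{p_j}^{n_j}$ and $L=2$; the hypotheses \eqref{eq:selection-hypotheses} then hold by \Cref{prop:finite-obstruction}\ref{it:finite-gaussian},\ref{it:finite-trace}.

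\emph{Checking $\eta\leq1/8$.} We have $\eta=aD_jC^2$. From $\alpha_{p_j}^{n_j}\geq\Lambda_j\geq D_j^8$ we get $D_j\leq\Lambda_j^{1/8}$. Combined with $C^2\leq\Lambda_j^{1/4}/25$, this gives
\[
\eta\leq\frac{2\Lambda_j^{1/8}\Lambda_j^{1/4}}{25\Lambda_j}\leq\frac{2}{25}<\frac18 .
\]

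\emph{Deriving the contradiction.} The lemma produces $F'\subseteq E_j\oplus_2H$ with $\dim F'\leq3q_j$, a $D_jC$-unconditional basis, and $\theta(F')\geq(1-4\eta)^2/(4C^2)\geq 1/(16C^2)$, say. Applying \Cref{prop:finite-obstruction}\ref{it:finite-overlap} with constant $D_jC$ gives
\[
\frac{1}{16C^2}\leq\frac{80D_j^3C^3}{\Lambda_j},
\qquad\text{that is,}\qquad
\Lambda_j\leq1280\,D_j^3C^5\leq1280\,\Lambda_j^{3/8}\,C^5 .
\]
Hence $C^5\geq\Lambda_j^{5/8}/1280$, so $C\geq\Lambda_j^{1/8}/1280^{1/5}$. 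Since $1280^{1/5}\approx4.18<5$, this contradicts $C\leq\Lambda_j^{1/8}/5$.

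The main obstacle is this constant bookkeeping. If the numbers come out too tight, I would use the sharper $(1-4\eta)^2\geq(1-8/25)^2$. If needed, I would also strengthen \eqref{eq:recursive-separation} slightly; the asymptotic statement is unaffected. Strict inequality then follows because $C$ was arbitrary up to the given bound.

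\emph{Consequences.} Taking $F$ finite-dimensional gives the displayed infimum bound, and $\Lambda_j\to\infty$ by \eqref{eq:positive-exponent}. For DPR-lust, apply the definition with $V=E_j$: every finite-dimensional superspace of $E_j$ has $u\geq\Lambda_j^{1/8}/5$, so $\chi_{\mathrm{DPR}}(Y)=\infty$. For the unconditional basis, take $F=Y$: a $C$-unconditional basis of $Y$ would require $C\geq\Lambda_j^{1/8}/5$ for every $j$, which is impossible.
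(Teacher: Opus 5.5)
Your proposal is correct and follows the paper's proof essentially step by step. It uses the same isometric splitting $F=E_j\oplus_2(F\cap W_j)$, the same application of \Cref{lem:finite-selection} with $a=2/\alpha_{p_j}^{n_j}$, $L=2$ and $D=D_j$, and the same comparison with \Cref{prop:finite-obstruction}\ref{it:finite-overlap} at constant $D_jC$. Your final bound $C\geq\Lambda_j^{1/8}/1280^{1/5}$ is just a rearrangement of the paper's $1\leq1280C^5\Lambda_j^{-5/8}\leq1280/5^5<1$.

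The constants close as you wrote them. The fallback of strengthening \eqref{eq:recursive-separation} is therefore unnecessary, and it would in any case change the space $Y$ that the statement is about.
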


\begin{proof}
Fix $j\geq1$ and such a subspace $F$. The coordinate projection onto $E_j$ leaves $F$ invariant, because $E_j\subseteq F$. Hence
\begin{equation*}
 F=E_j\oplus_2(F\cap W_j)
\end{equation*}
isometrically. Every subspace of $F\cap W_j$ of dimension at most $3q_j$ is $D_j$-Hilbertian by the preceding estimates for the earlier and later summands. Suppose that $F$ has a $C$-unconditional basis with $C\leq\Lambda_j^{1/8}/5$. We apply \Cref{lem:finite-selection} with $a_j=2/\alpha_{p_j}^{n_j}$ and $L_j\leq2$, by \Cref{prop:finite-obstruction}. Since $D_j\leq\Lambda_j^{1/8}$, $\alpha_{p_j}^{n_j}\geq\Lambda_j\geq1$ and
\begin{equation*}
 \eta_j=\frac{2D_jC^2}{\alpha_{p_j}^{n_j}}
 \leq\frac{2}{25}\Lambda_j^{-5/8}<\frac18,
\end{equation*}
the lemma gives a subspace $F'\subseteq E_j\oplus_2H$ of dimension at most $3q_j$, with a $D_jC$-unconditional basis and $\theta_{E_j}(F')\geq1/(16C^2)$. The finite-dimensional overlap estimate therefore yields
\begin{equation*}
 \frac1{16C^2}
 \leq\theta_{E_j}(F')
 \leq\frac{80(D_jC)^3}{\Lambda_j}
 \leq80C^3\Lambda_j^{-5/8}.
\end{equation*}
It follows that
\begin{equation*}
 1\leq1280C^5\Lambda_j^{-5/8}
 \leq\frac{1280}{5^5}<1,
\end{equation*}
a contradiction. The argument applies to every closed subspace $F$ containing $E_j$, since the Hilbert approximation is needed only on the subspace selected by the lemma. Restricting to finite-dimensional $F$ proves failure of DPR-lust, while taking $F=Y$ for all $j\geq1$ rules out an unconditional basis.
\end{proof}

Growth of $u(E_j)$ alone would not exclude DPR-lust, whose definition allows passage to a larger finite-dimensional subspace of $Y$. We next realise $Y$ as the range of a projection with norm arbitrarily close to one on a space with a $1$-unconditional basis.
\subsection{Finite-dimensional embeddings}\label{sec:embedding}

We now embed the tensor spaces into finite-dimensional $\ell_p$ spaces. Taking the $\ell_2$-sum of these embeddings will realise an isomorphic copy of $Y$ as a complemented subspace of $X_\rho$, with a projection of norm less than $1+\rho$. Over $\R$, a uniform circle grid preserves covariance exactly and gives explicit projections, whereas over $\C$, conditional expectations onto finite partitions give the required approximation.

\subsubsection{The real case}
For $\mathbb K=\R$, we sample the functions $f_x$ on a uniform grid in $\T^n$.

For each $n\geq1$, put $M=1000n^2$ and $\vartheta_r=2\pi r/M$ for $r\in\{0,\ldots,M-1\}$. Let $U_n$ be the space of real functions on $\{0,\ldots,M-1\}^n$, with the $L_{p_n}$ norm for normalised counting measure. Define operators $A_n\colon E_n\to U_n$ and $B_n\colon U_n\to E_n$ by
\begin{equation}\label{eq:sampling-maps}
 \begin{split}
 (A_nx)_r&=f_x(\vartheta_{r_1},\ldots,\vartheta_{r_n}),\\
 B_nz&=M^{-n}\sum_{r\in\{0,\ldots,M-1\}^n}
             z_rW(\vartheta_{r_1},\ldots,\vartheta_{r_n}).
 \end{split}
\end{equation}
For $M\geq3$, the grid averages of $\cos(2t)$ and $\sin(2t)$ vanish, so $M^{-1}\sum_{r=0}^{M-1}w(\vartheta_r)w(\vartheta_r)^{\mathsf T}=I_{\R^2}$. Taking tensor products therefore gives
\begin{equation}\label{eq:exact-sampling}
 B_nA_n=I_{E_n}.
\end{equation}

Thus $A_nB_n$ is a projection onto $A_n(E_n)$. The next lemma estimates these maps and shows that the projection norms tend to one.

\begin{lemma}\label{lem:sampling}
With $b_M=1+64\pi/M$ and $p=p_n$, we have
\begin{equation}\label{eq:sampling-bounds}
 \norm{A_n}\leq b_M^n,\qquad
 \norm{B_n}\leq(c_pb_M)^n,\qquad
 \norm{A_nB_n}\leq(c_pb_M^2)^n\longrightarrow1.
\end{equation}
\end{lemma}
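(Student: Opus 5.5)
The plan is to prove the three bounds in order: the first by a one-variable discretisation estimate tensorised via Fubini, the second by duality with the first, and the third from the factorisation $A_nB_n=A_n\circ B_n$. Throughout, $p=p_n$ and $M=1000n^2$.

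\textbf{Step 1: the bound on $A_n$.} Fix $x\in\mathcal H_n$. The function $f_x$ is a trigonometric polynomial of degree at most one in each variable $t_s$. We compare the normalised grid $L_p$ norm with the continuous $L_p(\T^n)$ norm one variable at a time. The one-variable statement we need is: for $g(t)=a\cos t+b\sin t$ (with coefficients that are themselves functions of the remaining variables) and $|t-\vartheta|\le \pi/M$,
\begin{equation*}
 |g(\vartheta)|\le |g(t)|+\tfrac{\pi}{M}\sup|g'|\le |g(t)|+\tfrac{\pi}{M}\sup|g|.
\end{equation*}
Two further facts control $\sup|g|$. Bernstein's inequality gives $\sup|g|=\sup|g'|$, and the Nikolskii-type bound $\sup|g|\le C\norm{g}_{L_p(\T)}$ holds with a modest absolute constant, since $g$ has degree one and $p\le3$ (for instance $\sup|g|=\sqrt{a^2+b^2}\le 2\norm g_{L_2}\le2\norm g_{L_p}$). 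Averaging $|g(\vartheta_r)|^p$ over the arc of length $2\pi/M$ around each grid point and applying Minkowski gives
\begin{equation*}
 \norm{g}_{\ell_p(\mathrm{grid})}\le(1+64\pi/M)\norm g_{L_p(\T)}=b_M\norm g_{L_p(\T)}.
\end{equation*}
The numerical constant $64$ leaves ample room. Applying this in each variable successively, with Fubini mixing grid sums and integrals, yields $\norm{A_n}\le b_M^n$.

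\textbf{Step 2: the bound on $B_n$.} Write $B_n=P_n\circ\widetilde{B}_n$ in spirit, or argue directly by duality. For $y\in E_n$, the pairing $\langle B_nz,y\rangle$ equals the grid average of $z_r\,\overline{f_y(\vartheta_r)}$. Hence $\norm{B_n}\le \sup_y \norm{A_n y}_{\ell_{p'}}/\norm{y}_{E_n^*}$. The $E_n^*$ norm is bounded below using the coefficient map: by isotropy, $\norm{\langle\cdot,y\rangle}_{E_n^*}\ge \norm{f_y}_{L_{p'}}/c_p^n$. This follows because $P_n$ has norm at most $c_p^n$ and reproduces $f_y$ in $L_{p'}$ duality. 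The same grid estimate of Step 1, now applied with exponent $p'\in[3/2,2)$, bounds $\norm{A_ny}_{\ell_{p'}}\le b_M^n\norm{f_y}_{L_{p'}}$; Step 1 must therefore be checked for $p'$ as well, and the constant $2$ in the Nikolskii bound survives for $p'\ge 3/2$ up to a harmless adjustment. Combining the two gives $\norm{B_n}\le(c_pb_M)^n$.

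\textbf{Step 3: the projection bound.} We have $\norm{A_nB_n}\le\norm{A_n}\norm{B_n}\le(c_pb_M^2)^n$. By \eqref{eq:parameter-limits}, $c_{p_n}^n\to1$. Also $b_M^{2n}\le\exp(128\pi n/M)=\exp(128\pi/(1000n))\to1$. So the bound tends to one.

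The main obstacle is Step 2: obtaining the sharp factor $c_p^n$ rather than a lossy constant. This requires identifying $B_n$ as the discrete analogue of the coefficient map $f\mapsto\mathbb E_s[f(s)W(s)]$ and running the one-factor argument for $Q$, namely the bounds $\alpha_{p'}$ and $\alpha_p$, in grid form with the $b_M$ perturbation per variable. Concretely, I would first bound the grid coefficient map $\ell_p\to(\mathbb K^2,|\cdot|_2)$ by $\alpha_{p'}b_M$ via Hölder against the grid samples of $w$. I would then tensorise via Minkowski in the $L_{p/2}$ form used in \Cref{lem:packet-estimates}, and finally use the norm $\alpha_p^n$ of the synthesis map.
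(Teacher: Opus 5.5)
Your Steps~1 and~3 are essentially the paper's. Your one-variable comparison uses Bernstein, Minkowski over the arcs of length $2\pi/M$, and $\sup|g|\leq(\pi/2)\norm{g}_{L_1(\T)}$; the last bound also covers the exponent $p'$. This is a slightly different but valid derivation of the factor $b_M$.

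For $B_n$, the duality argument of your Step~2 is correct and genuinely different from the paper's. It gives $\norm{B_n}\leq c_p^n$ times the norm of the sampling map $f_y\mapsto A_ny$ from $L_{p'}$ to the grid $\ell_{p'}$. The factor $c_p^n$ comes from $\norm{f_y}_{L_{p'}}\leq c_p^n\norm{\langle\cdot,y\rangle}_{E_n^*}$. To see this, note that if $g\in L_{p'}(\T^n)$ represents the functional on $E_n$, then $P_ng=f_y$. Moreover, $\norm{P_n}_{L_{p'}\to L_{p'}}\leq c_p^n$, because $P_n$ has a symmetric kernel and so acts on $L_{p'}$ as the adjoint of $P_n$ on $L_p$. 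State this explicitly, since the paper records the bound only on $L_p$.

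The paper argues directly instead. It bounds the one-variable composite map $z\mapsto\langle M^{-1}\sum_rz_rw(\vartheta_r),w(\cdot)\rangle$ from the grid $\ell_p$ to $L_p(\T)$ by $c_pb_M$. This uses H\"older against the grid samples of $\langle w(\cdot),a\rangle$ and the grid comparison at $p'$ for that single function. It then applies this map coordinate by coordinate via Fubini. Your route is modular: it reduces the bound for $B_n$ to the sampling inequality at the conjugate exponent plus the continuous projection. The price is the full $n$-variable comparison at $p'$ and a description of $E_n^*$. The paper's route is self-contained and never leaves $L_p$.

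You should, however, discard the plan in your final paragraph, because it would fail. Tensorising the grid coefficient map into $(\mathcal H_n,|\cdot|_2)$ and then paying $\alpha_p^n$ for synthesis factors $B_n$ through a Hilbert space. The tensorised coefficient map does not have norm anywhere near $(\alpha_{p'}b_M)^n$. Its norm is $\sup_{|x|_2=1}\norm{A_nx}_{\ell_{p'}}$, and the required inequality $\norm{f_x}_{L_{p'}(\T^n)}\leq\alpha_{p'}^n|x|_2$ fails even without discretisation. For $n=2$ and $x=(e_1\otimes e_1+e_2\otimes e_2)/\sqrt2$ one has $f_x(t)=\sqrt2\cos(t_1-t_2)$, whose $L_{p'}$ norm is $\alpha_{p'}>\alpha_{p'}^2$. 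The reason is that the convexity used in \Cref{lem:packet-estimates}\ref{it:packet-hilbert} reverses below exponent two.

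More decisively, $B_nA_n=I_{E_n}$ and \Cref{lem:trace-estimate} give $\gamma_2(B_n)\geq\gamma_2(I_{E_n})/\norm{A_n}\geq\alpha_p^n/(\gamma_pb_M^n)\to\infty$. So every bound for $\norm{B_n}$ obtained through a Hilbert factorisation tends to infinity. The cancellation producing $c_p^n$ occurs only when the coefficient and synthesis maps are composed within each variable before tensorising, as in the paper, or are handled together, as in your duality argument.
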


\begin{proof}
For $1\leq r\leq4$, the function $|\sqrt2\cos(t-\phi)|^r$ is $16$-Lipschitz, uniformly in $\phi$, so its grid average differs from its integral by at most $32\pi/M$. By \eqref{eq:explicit-alpha} and translation invariance, its integral equals $\alpha_r^r$. This integral is at least $1/2$: for $r\geq2$, monotonicity of the $L_r$ norms on the normalised circle gives $\alpha_r^r\geq\alpha_2^r=1$, while for $1\leq r\leq2$ we have $\alpha_1=2\sqrt2/\pi<1$ and
\begin{equation*}
 \alpha_r^r\geq\alpha_1^r\geq\alpha_1^2=\frac8{\pi^2}>\frac12.
\end{equation*}
The preceding estimates show that every real linear combination of $\cos t$ and $\sin t$ has discrete $L_r$ norm at most $b_M$ times its $L_r(\T)$ norm.

For $x\in E_n$, fixing all but one variable makes $f_x$ a real linear combination of $\cos t$ and $\sin t$ in that variable. Apply the comparison at exponent $p$ successively in each variable, integrating or averaging over the remaining variables at each step. This gives
\begin{equation*}
 \norm{A_nx}_{U_n}\leq b_M^n\norm{f_x}_{L_p(\T^n)}=b_M^n\norm{x}_{E_n}.
\end{equation*}

To estimate $B_n$, first consider $z=(z_r)_{r=0}^{M-1}$ on the one-dimensional grid and put $v=M^{-1}\sum_{r=0}^{M-1}z_rw(\vartheta_r)$. For every $a\in\R^2$ with $|a|_2=1$, we have
\begin{equation*}
 |\langle v,a\rangle|
 \leq\left(M^{-1}\sum_{r=0}^{M-1}|z_r|^p\right)^{1/p}
     \left(M^{-1}\sum_{r=0}^{M-1}|\langle w(\vartheta_r),a\rangle|^{p'}\right)^{1/p'}
 \leq b_M\alpha_{p'}\left(M^{-1}\sum_{r=0}^{M-1}|z_r|^p\right)^{1/p},
\end{equation*}
where the first inequality is H\"older's and the second follows from the grid comparison at exponent $p'$ and $\norm{\langle w(\cdot),a\rangle}_{L_{p'}(\T)}=\alpha_{p'}$. Taking the supremum over $a$ and recalling $c_p=\alpha_p\alpha_{p'}$, we obtain
\begin{equation*}
 \norm{\langle v,w(\cdot)\rangle}_{L_p(\T)}
 =\alpha_p|v|_2
 \leq c_pb_M\left(M^{-1}\sum_{r=0}^{M-1}|z_r|^p\right)^{1/p}.
\end{equation*}

For $z\in U_n$, apply this map in each coordinate, keeping the other coordinates fixed. At each step, integrate the $p$-th power of the one-variable estimate over the remaining coordinates. Fubini's theorem then gives a factor $c_pb_M$ in the product $L_p$ norm at each step. After all $n$ coordinates have been treated, the resulting function is $f_{B_nz}$ by \eqref{eq:sampling-maps}, so
\begin{equation*}
 \norm{B_nz}_{E_n}=\norm{f_{B_nz}}_{L_p(\T^n)}
 \leq(c_pb_M)^n\norm z_{U_n}.
\end{equation*}

Finally, $\norm{A_nB_n}\leq\norm{A_n}\norm{B_n}\leq(c_pb_M^2)^n$. By \eqref{eq:parameter-limits}, $n\log c_{p_n}\to0$, while $M=1000n^2$ gives
\begin{equation*}
 0\leq n\log b_M\leq\frac{64\pi n}{M}=\frac{64\pi}{1000n}\longrightarrow0.
\end{equation*}
The upper bound for the projection norm therefore tends to one.
\end{proof}

\subsubsection{The complex case}
The complex analogue uses the continuous projection $P_n$ from \Cref{sec:tensors}. The next lemma obtains the finite-dimensional embedding by conditional expectations; see \cite[Section~5.1, Theorem~33]{JohnsonSchechtman2001} for similar quantitative results on complemented embeddings into finite-dimensional $\ell_p$ spaces.

\begin{lemma}\label{lem:complex-sampling}
Suppose $\mathbb K=\C$. For each $n\geq1$ and $0<\delta<1$, there are a finite-dimensional complex $\ell_{p_n}$ space $U$ and operators $A\colon E_n\to U$ and $B\colon U\to E_n$ such that
\begin{equation*}
 BA=I_{E_n},\qquad \norm A\leq1,\qquad
 \norm B\leq\frac{c_{p_n}^n}{1-\delta}.
\end{equation*}
\end{lemma}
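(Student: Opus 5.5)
We will build $A$ as a composition of three maps: the inclusion $E_n\subseteq L_{p_n}(\Omega^n)$, then a conditional expectation onto a fine finite partition of $\Omega^n$, then the isometric identification of the step functions with a weighted, hence after rescaling ordinary, finite-dimensional $\ell_{p_n}$ space $U$. Fix a finite measurable partition $\pi$ of the sphere $\Omega\subseteq\C^2$ into cells of small diameter. Let $\mathbb E_\pi$ be the conditional expectation onto the $\sigma$-algebra generated by the product partition $\pi^{\otimes n}$ of $\Omega^n$. This is a contraction on every $L_r$. The span of indicators of the cells of $\pi^{\otimes n}$ in $L_{p_n}(\Omega^n)$ is isometric to a finite-dimensional $\ell_{p_n}$ space, by scaling each indicator by the $1/p_n$ power of its measure. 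We will set $U$ to be this space and $A_0x=\mathbb E_\pi f_x$. Then $\norm{A_0}\leq1$.

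For the left inverse we will first use the continuous projection. Let $B_0=\Phi\circ P_n$ restricted to $U$, where $\Phi\colon E(n,p_n)\to\mathcal H_n$ inverts $x\mapsto f_x$. Thus $\norm{B_0}\leq\norm{P_n}\leq c_{p_n}^n$. The composite $B_0A_0$ is a finite-dimensional operator on $E_n$ given by the tensor power $K^{\otimes n}$, where
\[
K=\mathbb E_s\bigl[(\mathbb E_\pi\overline{w})(s)\,w(s)^*\bigr]\ \text{(suitably transposed)}\in M_2(\C).
\]
This holds because $\mathbb E_\pi$ factors across coordinates and $P_n$ is the tensor product of the one-variable $Q$. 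Since $w$ is uniformly continuous on $\Omega$, $K\to I$ as the mesh of $\pi$ tends to $0$. Hence $K^{\otimes n}\to I_{\mathcal H_n}$ for fixed $n$, and by equivalence of norms on the finite-dimensional space $E_n$,
\[
\norm{I_{E_n}-B_0A_0}_{E_n\to E_n}\leq\delta
\]
for a fine enough partition.

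Then $B_0A_0$ is invertible by the Neumann series, with $\norm{(B_0A_0)^{-1}}\leq(1-\delta)^{-1}$. This is the correction of the approximate inverse. We set $A=A_0$ and $B=(B_0A_0)^{-1}B_0$. Then $BA=I_{E_n}$, $\norm A\leq1$, and $\norm B\leq c_{p_n}^n/(1-\delta)$, as required.

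The main point to verify carefully is the step $\norm{I-B_0A_0}\leq\delta$. We plan to do this crudely. Bound the $E_n$-operator norm by $\alpha_{p_n}^n$ times the Hilbert operator norm using \Cref{lem:packet-estimates}\ref{it:packet-hilbert}. Then use
\[
\norm{K^{\otimes n}-I}_{2\to2}\leq(1+\norm{K-I})^n-1,
\]
which can be made smaller than $\delta\alpha_{p_n}^{-n}$ by refining $\pi$, since $n$ is fixed. One must also check that $\mathbb E_\pi f_x$ genuinely lands in a finite-dimensional $L_{p_n}$ space identified with $\ell_{p_n}$ of counting norm, which only requires the rescaling of basis indicators noted above. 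Neither step needs the quantitative uniformity in $n$; that uniformity is supplied entirely by the bound $\norm{P_n}\leq c_{p_n}^n$.
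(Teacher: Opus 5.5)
Your proposal is correct and follows the paper's proof: you take $A$ to be the conditional expectation onto a fine finite partition, identify $U$ with $\ell_{p_n}^N$ by rescaling the indicators, and set $B=(P_nA)^{-1}P_n|_U$, the Neumann-series correction of the approximate inverse. The only difference is how you get $\norm{I_{E_n}-P_nA}\leq\delta$. The paper uses strong convergence of the conditional expectations along a generating sequence of partitions together with $\dim E_n<\infty$, whereas you verify it explicitly through the tensor formula $K^{\otimes n}$, uniform continuity of $w$, and the norm comparison in \Cref{lem:packet-estimates}\ref{it:packet-hilbert}.
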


\begin{proof}
Let $(C_r)_{r=1}^{\infty}$ be conditional expectations onto increasing finite partitions generating the product sphere $\sigma$-algebra. Their strong convergence on $L_{p_n}$ and the finite dimension of $E_n$ imply that $T_r=P_nC_r|_{E_n}\to I_{E_n}$ in operator norm. Choose $r$ so that $\norm{I_{E_n}-T_r}<\delta$ and set
\begin{equation*}
 U=\ran C_r,\qquad A=C_r|_{E_n},\qquad B=T_r^{-1}P_n|_U.
\end{equation*}
Then $BA=I_{E_n}$, and the bounds for $A$ and $B$ follow from $\norm{C_r}=1$, $\norm{P_n}\leq c_{p_n}^n$ and $\norm{T_r^{-1}}\leq(1-\delta)^{-1}$. The norm on $U$ is a weighted $\ell_{p_n}$ norm, so rescaling the coordinates identifies $U$ isometrically with $\ell_{p_n}^{N}$, where $N=\dim U$.
\end{proof}

\subsubsection{The complemented embedding}\label{sec:complemented-embedding}
For a uniformly bounded family of operators $(T_j\colon V_j\to W_j)_{j\in J}$ between Banach spaces, we denote by $\diag(T_j:j\in J)$ the \emph{diagonal operator} from $(\bigoplus_{j\in J}V_j)_2$ to $(\bigoplus_{j\in J}W_j)_2$ given by
\begin{equation*}
 \diag(T_j:j\in J)\bigl((x_j)_{j\in J}\bigr)=(T_jx_j)_{j\in J}.
\end{equation*}
Its norm is $\sup_{j\in J}\norm{T_j}$. We now combine the embeddings over either field.

\begin{proof}[Construction of the projection]
By \Cref{lem:sampling,lem:complex-sampling} and \eqref{eq:parameter-limits}, the sequence $(n_j)_{j=1}^{\infty}$ in \Cref{thm:separated} can be chosen so that there are $U_j=\ell_{p_j}^{N_j}$ and uniformly bounded maps $A_j\colon E_j\to U_j$ and $B_j\colon U_j\to E_j$ satisfying
\begin{equation*}
 B_jA_j=I_{E_j},\qquad \norm{A_jB_j}\leq1+\rho/2
 \qquad(j\geq1).
\end{equation*}
Over $\R$, these are the sampling maps $A_{n_j},B_{n_j}$ after rescaling the coordinates of $U_{n_j}$. Over $\C$, choose errors $(\delta_j)_{j=1}^{\infty}$ tending to zero so that $c_{p_j}^{n_j}/(1-\delta_j)\leq1+\rho/2$. For $X=(\bigoplus_{j=1}^{\infty}U_j)_2$, the diagonal maps
\begin{equation*}
 A=\diag(A_j:j\in\N)\colon Y\to X,
 \qquad B=\diag(B_j:j\in\N)\colon X\to Y
\end{equation*}
are bounded and satisfy $BA=I_Y$. Thus $A$ is an isomorphism onto the closed subspace $Z=A(Y)$, and $P=AB$ is a projection onto $Z$. Its norm is the supremum of the block norms, so $\norm P\leq1+\rho/2<1+\rho$.

Write $P_j=A_jB_j$ for $j\geq1$. In the real case, with $M_j=1000n_j^2$ and $N_j=M_j^{n_j}$, the inner product of two sampling vectors gives the explicit matrix formula
\begin{equation}\label{eq:explicit-projection}
 P_j(r,s)=\left(\frac2{M_j}\right)^{n_j}
 \prod_{t=1}^{n_j}\cos\left(\frac{2\pi(r_t-s_t)}{M_j}\right),
 \qquad P=\diag(P_j:j\in\N),
\end{equation}
where $r,s\in\{0,\ldots,M_j-1\}^{n_j}$. In this case, \eqref{eq:exact-sampling} shows that $P_j^2=P_j$ and $\rank P_j=2^{n_j}$; symmetry makes $P_j$ a Hilbert orthogonal projection. Multiplying the coordinates of $U_{n_j}$ by $M_j^{-n_j/p_j}$ identifies this space isometrically with $\ell_{p_j}^{N_j}$ and leaves the matrix of $P_j$ unchanged.

Over either field, taking the coordinate bases in block order gives a $1$-unconditional basis of $X$. The space $Z$ is isomorphic to $Y$ and therefore fails DPR-lust by \Cref{thm:separated}. The $\ell_2$-sum representation also shows that $X$ and $Z$ are separable and reflexive.
\end{proof}

To complete the proofs of \Cref{thm:real-consequences,thm:main}, we must still establish the GL-lust bounds, superreflexivity and the conclusions for the dual and, over $\R$, both complementary summands. We do so in \Cref{sec:lattices}, where we also obtain the Banach-lattice conclusions.

\section{Local unconditional structure and Banach lattices}
\label{sec:lattices}

We combine the failure of DPR-lust in \Cref{thm:separated} with the complemented embedding of \Cref{sec:embedding}. The first two subsections apply over either scalar field and establish the separation for $Z=P(X)$ and its dual. In the real case, we then modify the construction to obtain the same conclusions for both complementary summands, completing \Cref{thm:real-consequences}.

\subsection{Separation of GL-lust and DPR-lust}

We begin with the standard estimate for GL-lust on complemented subspaces.

\begin{lemma}\label{lem:complemented-gl}
Let $X$ be a Banach space with a $1$-unconditional basis, and let $Z$ be a nonzero Banach space. If bounded linear operators $J\colon Z\to X$ and $R\colon X\to Z$ satisfy $RJ=I_Z$, then
\begin{equation*}
 1\leq\chi_{\mathrm{GL}}(Z)\leq\norm J\norm R.
\end{equation*}
\end{lemma}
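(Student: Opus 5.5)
The plan is to factor the inclusion of each finite-dimensional subspace of $Z$ through a finite section of the basis of $X$. Let $(e_i)_{i\in I}$ be the $1$-unconditional basis of $X$ and let $V\subseteq Z$ be a nonzero finite-dimensional subspace with inclusion $\iota_V$. For $m\in I$ put $U_m=\operatorname{span}\{e_1,\ldots,e_m\}$ and $S_m=\sum_{i=1}^m e_i\otimes e_i^*$. Since the basis is $1$-unconditional, $(e_i)_{i=1}^m$ is a $1$-unconditional basis of $U_m$ and $\norm{S_m}\leq1$. Moreover $S_m\to I_X$ strongly, hence uniformly on the compact set $J(\{v\in V:\norm v\leq1\})$.

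Fix $\delta\in(0,1)$ and choose $m$ with $\norm{(I_X-S_m)J|_V}<\delta/\norm R$. Consider $a_0=S_mJ|_V\colon V\to U_m$ and $b_0=R|_{U_m}\colon U_m\to Z$. Then
\begin{equation*}
 \norm{b_0a_0-\iota_V}=\norm{R(S_m-I_X)J|_V}<\delta .
\end{equation*}
The composite $b_0a_0$ need not equal $\iota_V$, so I correct it. The operator $T=b_0a_0\colon V\to Z$ is close to the inclusion, but its range need not lie in $V$. I first project back into $V$ with a bounded projection $\pi\colon Z\to V$ and then invert $\pi T$, which is close to $I_V$. This costs a factor $\norm\pi$, which is not uniformly controlled.

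The better correction uses the finite-dimensional space $V$ on the other side. Set $b=b_0$ and take $a=a_0(\,\cdot\,)+c$, where $c\colon V\to U_m$ must absorb the error $\iota_V-b_0a_0$. That would require lifting through $R|_{U_m}$, which again is uncontrolled.

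So instead I use a cleaner exact factorisation that avoids approximation in the middle. I take $U=U_m$ and $a=S_mJ|_V$ as before, and $b=R|_{U_m}\circ K$, where $K$ is any operator on $U_m$ with $\norm K$ close to $1$. This does not obviously work either. The workable fix is to write
\begin{equation*}
 \iota_V=RJ|_V=R S_m J|_V+R(I_X-S_m)J|_V .
\end{equation*}
The first term factors through $U_m$ with norm product at most $\norm R\norm J$. The second term is a small finite-rank error $E\colon V\to Z$ with $\norm E<\delta$.

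To absorb $E$, I enlarge the middle space to $U=U_m\oplus_\infty\ell_\infty^{d}$. Here $d=\dim V$ and the union of the two bases is again $1$-unconditional. By Auerbach's lemma, $\iota_V$ factors through $\ell_\infty^d$ with constant about $d$, so $E$ factors through $\ell_\infty^d$ with norm product at most $d\delta$. Placing the two factorisations side by side gives $a=(S_mJ|_V,\,a_E)$ and $b=R|_{U_m}\oplus b_E$, with $ba=\iota_V$. Rescaling the pieces so the norms balance gives $\norm a\norm b\leq\norm J\norm R+O(d\delta)$. Letting $\delta\to0$ with $d$ fixed yields the upper bound $\chi_{\mathrm{GL}}(Z)\leq\norm J\norm R$.

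The lower bound $\chi_{\mathrm{GL}}(Z)\geq1$ comes from any one-dimensional $V$. For every factorisation $ba=\iota_V$ we have $1=\norm{\iota_V}\leq\norm b\norm a$.

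The main obstacle is the balancing step. Norms of a direct sum behave like maxima, so the product $\norm a\norm b$ is controlled by $\max\{\norm{S_mJ},\norm{a_E}\}\cdot\max\{\norm R,\norm{b_E}\}$. I will normalise $a_E$ so that $\norm{a_E}\leq\norm J$ and $\norm{b_E}\leq d\delta/\norm J$. The product then stays at most $\norm J\max\{\norm R,d\delta/\norm J\}$, which equals $\norm J\norm R$ once $\delta$ is small.
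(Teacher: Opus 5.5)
Your final argument is correct, but it takes a somewhat different route from the paper. The paper argues in two steps: it asserts $\chi_{\mathrm{GL}}(X)=1$ by ``approximation by finite coordinate spans'', and then invokes the Gordon--Lewis factorisation lemma. Concretely, it factors the inclusion $J(V)\hookrightarrow X$ through a space with a $1$-unconditional basis and composes with $J|_V$ and $R$, which gives $\chi_{\mathrm{GL}}(Z)\leq\norm J\norm R\,\chi_{\mathrm{GL}}(X)$. You instead build an exact factorisation of $\iota_V$ directly. You split $\iota_V=RS_mJ|_V+E$ and absorb the error $E=R(I_X-S_m)J|_V$ through an extra $\ell_\infty^d$ summand, using Auerbach's lemma.

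Your version buys self-containedness. The $\oplus_\infty\ell_\infty^d$ device is exactly what turns approximation by coordinate spans into an exact factorisation, a step the paper leaves implicit. Taking $Z=X$ and $J=R=I_X$ in your argument proves $\chi_{\mathrm{GL}}(X)=1$. The paper's route buys generality, since $\chi_{\mathrm{GL}}(Z)\leq\norm J\norm R\,\chi_{\mathrm{GL}}(X)$ holds for arbitrary $X$. The exploratory false starts (projecting back onto $V$, lifting through $R|_{U_m}$, the operator $K$) play no role in the final proof and can simply be deleted.

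One correction to your last paragraph. The map $b(u,c)=Ru+b_Ec$ is defined on $U_m\oplus_\infty\ell_\infty^d$, so its norm is bounded by the sum $\norm R+\norm{b_E}$, not by the maximum. Maxima govern maps \emph{into} an $\oplus_\infty$-sum, such as your $a$, not maps out of one. With your normalisation you therefore get $\norm a\norm b\leq\norm J\norm R+d\delta$, as in your earlier $O(d\delta)$ estimate, rather than exactly $\norm J\norm R$. Since $\chi_{\mathrm{GL}}$ is defined through an infimum, letting $\delta\to0$ with $d$ fixed still gives the upper bound. Nothing is lost.
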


\begin{proof}
Since $X$ has a $1$-unconditional basis, approximation by finite coordinate spans gives $\chi_{\mathrm{GL}}(X)=1$. The factorisation argument of Gordon and Lewis \cite[Lemma~3.2]{GordonLewis1974}, applied to $J|_V$ and $R$ for each finite-dimensional subspace $V\subseteq Z$, gives
\begin{equation*}
 \chi_{\mathrm{GL}}(Z)\leq\norm J\norm R\,\chi_{\mathrm{GL}}(X)=\norm J\norm R.
\end{equation*}
The lower bound follows because every inclusion of a nonzero subspace has norm one.
\end{proof}

Combining \Cref{lem:complemented-gl} with \Cref{thm:separated} gives the GL/DPR separation for $Z=P(X)$. We also record the geometric properties that follow from its $\ell_2$-sum structure.

\begin{corollary}\label{cor:range-not-lattice}\label{cor:local-structure}
Let $P$ be the projection constructed in \Cref{sec:embedding}, and set $Z=P(X)$. Then:
\begin{resultparts}
\item\label{it:local-lust}
The local unconditional constants satisfy
\begin{equation*}
 \chi_{\mathrm{GL}}(Z)\leq\norm P<1+\rho,
 \qquad \chi_{\mathrm{DPR}}(Z)=\infty.
\end{equation*}
Consequently, $Z$ admits no unconditional basis. If $\mathbb K=\R$, it is not isomorphic to any Banach lattice.
\item\label{it:local-convexity}
Both $X$ and $Z$ are uniformly convex and hence superreflexive.
\item\label{it:local-fdd}
$Z$ has a contractive unconditional finite-dimensional decomposition and the metric approximation property.
\end{resultparts}
\end{corollary}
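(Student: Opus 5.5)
For \ref{it:local-lust}, apply \Cref{lem:complemented-gl} with $J\colon Z\to X$ the inclusion and $R=P$ viewed as a map $X\to Z$. Then $RJ=I_Z$ and $\norm J=1$. This gives $\chi_{\mathrm{GL}}(Z)\leq\norm P<1+\rho$, and the last inequality is the bound from the construction in \Cref{sec:complemented-embedding}.

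For the DPR constant, recall that $A\colon Y\to Z$ is an isomorphism onto $Z$. \Cref{thm:separated} gives $\chi_{\mathrm{DPR}}(Y)=\infty$, and DPR-lust is invariant under isomorphism. Quantitatively, $\chi_{\mathrm{DPR}}(Z)\geq\chi_{\mathrm{DPR}}(Y)/(\norm A\norm{B})$: superspaces of $A(E_j)$ in $Z$ pull back to superspaces of $E_j$ in $Y$, with distortion at most $\norm A\norm B$. Hence $\chi_{\mathrm{DPR}}(Z)=\infty$.

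The nonexistence of an unconditional basis follows because a space with a $C$-unconditional basis has DPR constant at most $C$: every finite-dimensional subspace is approximately contained in the span of finitely many basis vectors. The small perturbation needed to get exact containment is handled by a standard perturbation argument in finite dimensions, or one can directly note that such a space is isomorphic to a Banach lattice in the real case. Over $\R$, every Banach lattice has $\chi_{\mathrm{DPR}}=1$ by \cite{FigielJohnsonTzafriri1975}, and the isomorphism invariance above shows that $Z$ cannot be isomorphic to a Banach lattice. Over $\C$, the statement about unconditional bases uses the perturbation argument.

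For \ref{it:local-convexity}, each $\ell_{p_j}^{N_j}$ with $p_j\in(2,3]$ satisfies Clarkson's inequality with modulus of convexity $\delta(\eps)\geq c\eps^{3}$, uniformly in $j$, since $\delta_{L_p}(\eps)\gtrsim \eps^p/p2^p$ and $p\leq3$. The $\ell_2$-sum of uniformly convex spaces with a common power-type modulus is uniformly convex; this is a Day--Figiel type result. I expect this to be the only point needing care: cite the standard result for $\ell_2$-sums, or argue with the uniform type-2/cotype-$3$ constants and the 3-uniform convexity inequality $\norm{(x+y)/2}^3+c\norm{(x-y)/2}^3\leq(\norm x^3+\norm y^3)/2$. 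That inequality holds in each $\ell_{p_j}$ with a uniform constant and passes to the $\ell_2$-sum. Subspaces inherit uniform convexity, so $Z$ is uniformly convex, and superreflexivity follows by Enflo--James.

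For \ref{it:local-fdd}, take $Z_j=A_j(E_j)\subseteq U_j$, the $j$th block of $Z$. Since $Z=\overline{\bigoplus_j Z_j}$ with the $\ell_2$-sum norm restricted, multiplying blocks by scalars of modulus at most one does not increase the norm. This gives a contractive unconditional FDD, and the metric approximation property follows as in \Cref{subsec:decompositions}.
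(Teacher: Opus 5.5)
Your proposal is correct and follows essentially the same route as the paper. The ingredients match: \Cref{lem:complemented-gl} applied to the inclusion and $P$; isomorphism invariance of DPR-lust via $A$ together with \Cref{thm:separated}; Clarkson's uniform modulus bound $\varepsilon^3/24$ combined with the standard result on $\ell_2$-sums with a common modulus; and the isometric block decomposition $Z=(\bigoplus_j P_jX_j)_2$.

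The only difference is minor. The paper reads off the absence of an unconditional basis directly from \Cref{thm:separated}, which already excludes one for $Y$ by taking $F=Y$, together with isomorphism invariance. So your perturbation argument (unconditional basis implies DPR-lust), though valid, is not needed. Neither is your fallback type/cotype and $3$-convexity-inequality argument for uniform convexity, whose claimed passage to the $\ell_2$-sum would in any case need more justification than you give.
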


\begin{proof}
For \ref{it:local-lust}, observe that $Z$ is isomorphic to the space $Y$ of \Cref{thm:separated}, and therefore fails DPR-lust and admits no unconditional basis. Moreover, \Cref{lem:complemented-gl} gives $\chi_{\mathrm{GL}}(Z)\leq\norm P$. If $\mathbb K=\R$, the nonlattice conclusion follows because every Banach lattice has DPR-lust and this property is invariant under isomorphism.

For \ref{it:local-convexity}, recall that $2<p_j\leq3$ for every $j\geq1$. Clarkson's inequality \cite[Theorem~2]{Clarkson1936} gives the following common lower bound for the moduli of the summands:
\begin{equation*}
 \delta_{\ell_p^N}(\varepsilon)
 \geq1-\bigl(1-(\varepsilon/2)^p\bigr)^{1/p}
 \geq\frac{\varepsilon^3}{24}
 \quad(2\leq p\leq3,\ N\geq1,\ 0<\varepsilon\leq2).
\end{equation*}
An $\ell_2$-sum of spaces with a common modulus of uniform convexity is uniformly convex, so both $X$ and its subspace $Z$ have this property.

To prove \ref{it:local-fdd}, let $X_j=\ell_{p_j}^{N_j}$ and $Z_j=P_jX_j$ for $j\geq1$. Since $P$ acts diagonally, we have the isometric decomposition
\begin{equation*}
 Z=\left(\bigoplus_{j=1}^{\infty}Z_j\right)_2.
\end{equation*}
Its coordinate decomposition is contractive and unconditional, so $Z$ has the metric approximation property in its inherited norm, as observed in \Cref{subsec:decompositions}.
\end{proof}

\subsection{Duality}

We next show that $Y^*$ also fails DPR-lust. The proof uses the local Hilbert estimate for quotients in \Cref{lem:local-hilbert}, which allows us to work with the same sequence of summands as before. We use the canonical isometric identification $Y^*=E_j^*\oplus_2W_j^*$ for $j\geq1$.

\Needspace{7\baselineskip}
\begin{proposition}\label{prop:dual-obstruction}
For the space $Y$ in \Cref{thm:separated},
\begin{equation*}
 \lambda_{\mathrm{DPR}}(Y^*;E_j^*)
 \geq\frac{\Lambda_j^{1/8}}{10}
 \longrightarrow\infty\qquad(j\to\infty).
\end{equation*}
In particular, $Y^*$ fails DPR-lust.
\end{proposition}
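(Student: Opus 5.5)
We propose to mirror the proof of \Cref{thm:separated} in the dual. Fix $j\geq1$ and a finite-dimensional $G$ with $E_j^*\subseteq G\subseteq Y^*$, and suppose $G$ has a $C$-unconditional basis with $C\leq\Lambda_j^{1/8}/10$. Since $E_j^*$ is a summand of $Y^*=E_j^*\oplus_2W_j^*$ and the coordinate projection leaves $G$ invariant, we get $G=E_j^*\oplus_2(G\cap W_j^*)$ isometrically. We then check the hypotheses of \Cref{lem:finite-selection} with $E$ replaced by $E_j^*$. The Hilbert norm on $E_j^*$ will be the dual coefficient norm scaled by $\alpha_{p_j}^{-n_j}$, so that $|\cdot|_2\leq\norm{\cdot}_{E_j^*}$ by \Cref{lem:packet-estimates}\ref{it:packet-hilbert}.

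The key steps are as follows.

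\emph{First}, we need local Hilbertian control of $G\cap W_j^*$. The later part $(\bigoplus_{i>j}E_i)_2^*$ is a quotient of $(\bigoplus_{i>j}L_{p_i'})$, so this route would need exponents below $2$, and \Cref{lem:local-hilbert} is stated for $r\geq2$. Instead we view a subspace of $W_j^*$ through the predual duality: a $k$-dimensional subspace of $W^*$ is, up to $1+\epsilon$, the dual of a $k$-dimensional quotient of $W$. \Cref{lem:local-hilbert} covers quotients of $\ell_2$-sums of subspaces of $L_{r}$ with $r\geq 2$ exactly for this reason, so dual subspaces of dimension $\leq 3q_j$ of the later part are $2$-Hilbertian, and Hilbertian constants dualize. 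The earlier summands are handled with coefficient norms and constant $D_j$, as in \Cref{thm:separated}.

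\emph{Second}, we transfer the trace and Gaussian hypotheses to $E_j^*$. Trace is invariant under adjoints and $\gamma_2(T^*)=\gamma_2(T)$, so the trace bound of \Cref{prop:finite-obstruction}\ref{it:finite-trace} carries over. The rescaling of the Hilbert norm shifts the constants: $a$ picks up a factor like $c_{p_j}^{n_j}$, which tends to $1$. For the Gaussian moment we use $\norm{\cdot}_{E^*}\leq\alpha_{p'}^{n}|\cdot|_2$ via Hölder, so $L\lesssim 2$ after rescaling, with $\alpha_{p'}^n\alpha_p^{n}=c_p^n\to1$.

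\emph{Third}, we obtain $F'\subseteq E_j^*\oplus_2H$ with a $D_jC$-unconditional basis, dimension $\leq3q_j$, and overlap at least about $1/(16C^2)$. We need an upper bound on this overlap. Here we dualize: $(E_j^*\oplus_2H)^*=E_j\oplus_2H$, and under the coefficient identification the orthogonal complement structure is preserved. The biorthogonal basis of $F'^*$, realized as the subspace $R F'\subseteq E_j\oplus_2H$ via Hilbert duality, is $D_jC$-unconditional up to the Hilbert rescaling. Its overlap with $E_j$ equals $\theta(F')$, since it is the same orthogonal projection. Hence \Cref{prop:finite-obstruction}\ref{it:finite-overlap} gives $\theta\leq 80(D_jC)^3/\Lambda_j$.

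I expect this last identification to be the main obstacle: realizing the dual of $F'$ isometrically (or with controlled constant) as a subspace of $E_j\oplus_2H$ whose Hilbert-projection overlap matches. If orthogonal identification fails in norm, I would first try to apply \Cref{thm:overlap} directly to $E_j^*$. Its proof uses only the moment estimates, which have dual analogues. The worse constant, which may explain the $10$ in place of $5$, is absorbed by $\Lambda_j\geq D_j^8$. Combining the bounds then gives $1\lesssim C^5\Lambda_j^{-5/8}$, a contradiction.
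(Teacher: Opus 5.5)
There is a genuine gap. Running \Cref{lem:finite-selection} on the dual side, with $E_j^*$ in place of $E_j$, does not work: both estimates you need there fail.

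\emph{The Gaussian hypothesis.} To get $|\cdot|\leq\norm{\cdot}_{E_j^*}$ you multiply the coefficient norm by $\alpha_{p_j}^{-n_j}$, as you propose, so the relevant normalised Gaussian vector is $\alpha_{p_j}^{n_j}\zeta$. Write $\norm{y}_{E^*}$ for the norm of $\langle\cdot,y\rangle$. The bound $\norm{y}_{E^*}\leq\alpha_{p'}^n|y|_2$ from \Cref{lem:packet-estimates}\ref{it:packet-products} holds only for product vectors. For Gaussian vectors, $1=\mathbb E[|\zeta|_2^2]\leq(\mathbb E[\norm{\zeta}_{E}^2])^{1/2}(\mathbb E[\norm{\zeta}_{E^*}^2])^{1/2}$ together with \Cref{lem:packet-estimates}\ref{it:packet-gaussian} gives $\mathbb E[\norm{\zeta}_{E^*}^2]\geq\gamma_p^{-2}$. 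Hence $L\geq\alpha_{p_j}^{n_j}/\gamma_{p_j}$, and the overlap lower bound from the selection lemma is at most $\gamma_{p_j}^2C^{-2}\alpha_{p_j}^{-2n_j}$. Since $\Lambda_j\leq\alpha_{p_j}^{n_j}$, this would not contradict even an upper bound of the form $80(D_jC)^3/\Lambda_j$. (The trace hypothesis does transfer, with the same $a$, since it does not involve the Hilbert norm.)

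\emph{The upper bound.} Nothing is available on $E_j^*\oplus_2H$. \Cref{thm:overlap} uses the comparison $|z|_2\leq N(z)\leq\alpha_p^n|z|_2$, the dual-norm bound for product vectors, and interpolation between second and fourth moments at an exponent $p\in(2,3]$. On $E_j^*$ the exponent is $p_j'<2$, and the roles of product and Gaussian vectors are interchanged. The identification you flag also fails. The space $(F')^*$ is the quotient $(E_j\oplus_2H)/(F')^\perp$. Replacing cosets by their Hilbert-orthogonal representatives costs the Banach norm of an orthogonal projection, which can be of order $\alpha_{p_j}^{n_j}$. This is the factor hidden in ``up to the Hilbert rescaling'', and it destroys the control of the unconditional constant.

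The missing idea is to dualise the superspace \emph{before} selecting, so that everything happens on the $E_j$ side, where \Cref{prop:finite-obstruction} already applies. If $F=E_j^*\oplus_2V$ with $V=F\cap W_j^*$, then $F^*=E_j\oplus_2V^*$ isometrically. Moreover, $V^*$ is isometric to the quotient $W_j/V_\perp$, and the dual basis of $F^*$ is again $C$-unconditional. This, rather than subspaces of $W_j^*$, is where the quotient case of \Cref{lem:local-hilbert} enters. After giving the earlier summands of $W_j$ their coefficient Hilbert norms (distortion at most $D_j$), every subspace of $V^*$ of dimension at most $3q_j$ is $2D_j$-Hilbertian. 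Then \Cref{lem:finite-selection} applies to $F^*$ with $E=E_j$, $a=2/\alpha_{p_j}^{n_j}$, $L\leq2$ and $D=2D_j$. Together with \Cref{prop:finite-obstruction}\ref{it:finite-overlap}, it gives $1/(16C^2)\leq640C^3\Lambda_j^{-5/8}$, which is impossible when $C\leq\Lambda_j^{1/8}/10$. The constant $10$ comes from $D=2D_j$, not from a weaker overlap estimate on the dual side.
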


\begin{proof}
Let $j\geq1$ and let $F$ be a finite-dimensional subspace satisfying $E_j^*\subseteq F\subseteq Y^*$. Since $F$ contains the entire coordinate summand $E_j^*$, we have $F=E_j^*\oplus_2V$ isometrically, where $V=F\cap W_j^*$. Consequently, $F^*=E_j\oplus_2V^*$. By reflexivity and Hahn--Banach, $V^*$ is isometric to the quotient $W_j/V_\perp$, where $V_\perp=\{w\in W_j:f(w)=0\text{ for every }f\in V\}$.

To apply the selection lemma to $F^*$, we need a local Hilbert estimate for $V^*$. Let $S_j$ be the space obtained from $W_j$ by equipping its earlier summands with their coefficient Hilbert norms and leaving the later summands unchanged. The identity from $S_j$ to $W_j$ induces an isomorphism
\begin{equation*}
 S_j/V_\perp\longrightarrow W_j/V_\perp=V^*
\end{equation*}
with distortion at most $D_j$. The sum of the earlier summands in $S_j$ is Hilbertian, while $p_i-2\leq\varepsilon(3q_j,2)$ for every $i>j$. Combining \Cref{lem:local-hilbert} with this distortion bound shows that every subspace of $V^*$ of dimension at most $3q_j$ is $2D_j$-Hilbertian.

Suppose now that $F$ has a $C$-unconditional basis with $C\leq\Lambda_j^{1/8}/10$. The dual basis of $F^*$ is also $C$-unconditional. We may apply \Cref{lem:finite-selection} to $F^*$ with $a=2/\alpha_{p_j}^{n_j}$, $L\leq2$ and $D=2D_j$, since
\begin{equation*}
 \eta=\frac{4D_jC^2}{\alpha_{p_j}^{n_j}}
 \leq\frac1{25}\Lambda_j^{-5/8}<\frac18.
\end{equation*}
The resulting subspace $F'\subseteq E_j\oplus_2H$ satisfies the lower estimate in \Cref{lem:finite-selection}\ref{it:selection-overlap} and the upper estimate in \Cref{prop:finite-obstruction}\ref{it:finite-overlap}:
\begin{equation*}
 \frac1{16C^2}\leq\theta_{E_j}(F')
 \leq\frac{80(2D_jC)^3}{\Lambda_j}
 \leq640C^3\Lambda_j^{-5/8}.
\end{equation*}
Consequently $1\leq10240C^5\Lambda_j^{-5/8} \leq10240/10^5<1$, a contradiction.
\end{proof}

Together with the duality of GL-lust, \Cref{prop:dual-obstruction} gives the analogue of \Cref{cor:local-structure} for $Z^*$.

\Needspace{14\baselineskip}
\begin{corollary}\label{cor:dual-consequences}
For $Z=P(X)$ as in \Cref{cor:local-structure}, the following hold:
\begin{resultparts}
\item\label{it:dual-lust}
The local unconditional constants of $Z^*$ satisfy
\begin{equation*}
 \chi_{\mathrm{GL}}(Z^*)\leq\norm P<1+\rho,
 \qquad \chi_{\mathrm{DPR}}(Z^*)=\infty.
\end{equation*}
Consequently, $Z^*$ admits no unconditional basis. If $\mathbb K=\R$, it is not isomorphic to any Banach lattice.
\item\label{it:dual-convexity}
$Z^*$ is superreflexive.
\item\label{it:dual-fdd}
$Z^*$ has a contractive unconditional finite-dimensional decomposition and the metric approximation property.
\end{resultparts}
\end{corollary}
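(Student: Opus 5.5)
We derive each part of the corollary from the corresponding part of \Cref{cor:local-structure}, using duality of the diagonal structure.

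\emph{Part (i), GL bound.} Since $BA=I_Y$ and $Z=A(Y)$, set $J\colon Z\to X$ to be the inclusion and $R=P|^{Z}\colon X\to Z$ (the map $AB$ with codomain $Z$), so $RJ=I_Z$. Dualising gives $J^*R^*=I_{Z^*}$, with $R^*\colon Z^*\to X^*$ and $J^*\colon X^*\to Z^*$. The dual space is $X^*=(\bigoplus_j\ell_{p_j'}^{N_j})_2$, and the dual coordinate basis in block order is again $1$-unconditional. Applying \Cref{lem:complemented-gl} to the pair $(R^*,J^*)$ gives
\[
\chi_{\mathrm{GL}}(Z^*)\leq\norm{R^*}\norm{J^*}\leq\norm P\cdot1<1+\rho.
\]
Here $\norm{R}=\norm{P}$ because $R$ is $P$ with restricted codomain and $Z$ carries the norm of $X$.

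\emph{Part (i), DPR bound.} The isomorphism $A\colon Y\to Z$ induces an isomorphism $A^*\colon Z^*\to Y^*$, and DPR-lust is invariant under isomorphism. \Cref{prop:dual-obstruction} shows $\chi_{\mathrm{DPR}}(Y^*)=\infty$, hence $\chi_{\mathrm{DPR}}(Z^*)=\infty$. An unconditional basis of $Z^*$ would give DPR-lust: coordinate spans of the basis give, for each finite-dimensional $V$, superspaces $F\supseteq V$ with uniformly bounded $u(F)$, after a small perturbation of $V$ into a coordinate span. So $Z^*$ has no unconditional basis. Over $\R$, every Banach lattice has DPR-lust, so $Z^*$ is not isomorphic to a Banach lattice.

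\emph{Part (ii).} Superreflexivity passes to duals by Enflo's renorming theorem, equivalently by the duality of uniform convexity and uniform smoothness. Since $Z$ is uniformly convex by \Cref{cor:local-structure}\ref{it:local-convexity}, $Z^*$ is uniformly smooth and hence superreflexive.

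\emph{Part (iii).} This follows from the isometric decomposition $Z=(\bigoplus_j Z_j)_2$ with $Z_j$ finite-dimensional, which dualises to
\[
Z^*=\left(\bigoplus_{j}Z_j^*\right)_2
\]
isometrically. The coordinate decomposition of this $\ell_2$-sum is contractive and unconditional, so $Z^*$ has a contractive unconditional finite-dimensional decomposition. The metric approximation property then follows as observed in \Cref{subsec:decompositions}.

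The only point requiring care is the norm bookkeeping in the GL bound: checking that the corestriction $R$ has norm exactly $\norm P$ and that the inclusion $J$ has $\norm{J^*}\leq1$. The remaining steps are formal.
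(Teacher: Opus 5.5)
Your proof is correct. The DPR part, superreflexivity and the finite-dimensional decomposition follow the paper's argument essentially verbatim; your appeal to uniform smoothness of $Z^*$ is simply one standard justification of the fact, used in the paper, that superreflexivity passes to duals.

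The one genuine difference is the GL bound.
\begin{itemize}
\item The paper invokes the duality identity $\chi_{\mathrm{GL}}(Z^*)=\chi_{\mathrm{GL}}(Z)$ from \cite[Remark~1.1]{Pisier1978}. It then quotes the bound $\chi_{\mathrm{GL}}(Z)\leq\norm P$ from \Cref{cor:local-structure}\ref{it:local-lust}.
\item You instead dualise the factorisation $RJ=I_Z$ to $J^*R^*=I_{Z^*}$. You observe that $X^*=(\bigoplus_j\ell_{p_j'}^{N_j})_2$ again carries a $1$-unconditional basis, namely the biorthogonal functionals in block order. You then reapply \Cref{lem:complemented-gl} to the pair $(R^*,J^*)$, using $\norm{R^*}=\norm P$ and $\norm{J^*}=1$.
\end{itemize}

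Your route is self-contained. It uses only the concrete structure of $X^*$ and the lemma already proved, and avoids appealing to the general duality theorem for GL-lust. The paper's route is shorter given the citation. It is also more general, since the duality identity relates $\chi_{\mathrm{GL}}(Z^*)$ to $\chi_{\mathrm{GL}}(Z)$ for an arbitrary Banach space, not just for complemented subspaces of spaces whose duals have $1$-unconditional bases. For the present corollary, both yield the same bound $\norm P<1+\rho$.
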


\begin{proof}
For \ref{it:dual-lust}, the duality identity for GL-lust \cite[Remark~1.1]{Pisier1978} and \Cref{cor:local-structure}\ref{it:local-lust} give
\begin{equation*}
 \chi_{\mathrm{GL}}(Z^*)=\chi_{\mathrm{GL}}(Z)\leq\norm P.
\end{equation*}

The isomorphism between $Z^*$ and $Y^*$, together with \Cref{prop:dual-obstruction}, shows that $Z^*$ fails DPR-lust. It therefore admits no unconditional basis and, if $\mathbb K=\R$, cannot be isomorphic to a Banach lattice.

\ref{it:dual-convexity} follows from \Cref{cor:local-structure}\ref{it:local-convexity}, since superreflexivity passes to duals. For \ref{it:dual-fdd}, use the isometric block decomposition in the proof of \Cref{cor:local-structure}\ref{it:local-fdd}. Writing $Z_j=P_j(\ell_{p_j}^{N_j})$ for $j\geq1$, we obtain $Z^*=(\bigoplus_{j=1}^{\infty}Z_j^*)_2$ isometrically. This decomposition is contractive and unconditional, and its initial block projections give the metric approximation property.
\end{proof}

\subsection{Two complementary summands without lattice structures}
\label{subsec:two-complementary-ranges}

For the remainder of this section, we work over the real field. To obtain the separation for both complementary summands, we shall alternate the finite projections with their complements. This requires a stronger recursive choice, controlling the Hilbertian distortion of all preceding coordinate spaces. Stern's estimate for projections on $L_p$ spaces \cite{Stern2015} will ensure that the complementary projections also have norms tending to one.

\Needspace{5\baselineskip}
Recall that a class of Banach spaces, closed under isomorphism, is called \emph{primary} if, whenever $X$ belongs to the class and $X=Y\oplus Z$ is a topological decomposition, at least one of $Y$ and $Z$ also belongs to the class. For the class of Banach lattices, membership is understood up to Banach-space isomorphism.

The next corollary gives the GL/DPR separation for both complementary summands and their duals, proving \Cref{cor:separable-nonprimarity}.

\begin{corollary}\label{cor:two-nonlattice-ranges}
For every $\rho>0$ there are a separable superreflexive real Banach lattice $X$ with a $1$-unconditional basis and a projection $Q$ on $X$ such that
\begin{equation*}
 \norm Q<1+\rho,\qquad \norm{I_X-Q}<1+\rho.
\end{equation*}
The spaces $QX$, $(I_X-Q)X$ and their duals satisfy:
\begin{resultparts}
\item\label{it:two-lust}
Each has GL-lust but fails DPR-lust, and none is isomorphic to a Banach lattice.
\item\label{it:two-fdd}
Each has a contractive unconditional finite-dimensional decomposition and the metric approximation property.
\end{resultparts}
Consequently, the class of separable real Banach lattices is not primary.
\end{corollary}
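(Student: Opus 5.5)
The plan is to rerun the construction of \Cref{sec:construction} over $\R$, but alternating which of the two finite projections is used on each block. For each chosen $n_j$ we have the real sampling projection $P_j=A_{n_j}B_{n_j}$ on $U_j=\ell_{p_j}^{N_j}$, given by \eqref{eq:explicit-projection}. Its range is isomorphic to $E_j$, and $\norm{P_j}\to1$ by \Cref{lem:sampling}. Since $P_j$ is a Hilbert orthogonal projection on $\ell_{p_j}^{N_j}$, Stern's estimate \cite{Stern2015} should bound $\norm{I-P_j}$ by a quantity of the form $1+O(\norm{P_j}-1)$, or otherwise tending to one as $p_j\to2$ and $\norm{P_j}\to1$. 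So both $\norm{P_j}$ and $\norm{I-P_j}$ can be made less than $1+\rho$ by taking $n_j$ large.

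The idea is to use two interleaved sequences of blocks. On odd blocks $Q$ acts as $P_j$; on even blocks it acts as $I-P_j$. Then $QX$ contains copies of the odd $E_j$ as blocks, and $(I-Q)X$ contains copies of the even ones. Each range is an $\ell_2$-sum of pieces $P_jU_j$ and $(I-P_j)U_j$, so it has the contractive unconditional FDD and the metric approximation property, as in \Cref{cor:local-structure}\ref{it:local-fdd} and \Cref{cor:dual-consequences}\ref{it:dual-fdd}. GL-lust for both ranges and their duals follows from \Cref{lem:complemented-gl} together with Pisier's duality identity.

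For the failure of DPR-lust in $QX$, I would rerun \Cref{thm:separated}. Fix an odd $j$ and write $QX=A_j(E_j)\oplus_2 W$. We now need every subspace of $W$ of dimension at most $3q_j$ to be $D_j$-Hilbertian. The even blocks $(I-P_i)U_i$ are no longer Hilbertian, so this must be arranged through a stronger recursion, which I take to be the step the paper announces. I would set
\[
D_j=\max\{2,\ \max_{i<j} d_{\mathrm{BM}}(U_i,\ell_2^{N_i})\},
\]
that is, control the distortion of all of $\ell_{p_i}^{N_i}$ for earlier $i$, which is finite. Later blocks, for $i>j$, are subspaces of $L_{p_i}$ with $p_i-2\le\varepsilon(3q_j,2)$, so \Cref{lem:local-hilbert} applies to them unchanged. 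Summing the two Hilbert norms exactly as in the argument before \Cref{thm:separated} gives $D_j$-Hilbertian pieces. We then require $\Lambda_j\ge D_j^8$ as in \eqref{eq:recursive-separation}; this is possible since $D_j$ depends only on earlier choices.

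The selection lemma needs the summand to be $E_j$ itself, with its trace and Gaussian bounds. Here the block is $A_j(E_j)$, which is isomorphic to $E_j$ with constants $\norm{A_j}\norm{B_j}\to1$. I would therefore transport $F$ by the isomorphism $\diag$ (identity off block $j$, $B_j$ on it), which costs a factor close to $1$ in $C$. Even blocks and $(I-Q)X$ are handled symmetrically, since on even $j$ the block of $(I-Q)X$ is $P_jU_j\cong E_j$. Duals follow by \Cref{prop:dual-obstruction}, using the quotient form of \Cref{lem:local-hilbert} for later blocks and the distortion $D_j$ for earlier ones. Non-lattice and non-primarity then follow as before.

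The main obstacle is the norm of $I-P_j$. I would first check that Stern's theorem indeed gives $\norm{I-P_j}\to1$ for orthogonal projections with $\norm{P_j}\to1$ and $p_j\to2$. If it does not apply directly, the fallback is to exploit the tensor structure, viewing $I-P_j$ as $I-\bigotimes_t(\text{one-variable projection})$, but that route is less clear.
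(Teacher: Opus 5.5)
Your proposal is correct and follows the paper's proof. You alternate $R_{n_j}$ and $I-R_{n_j}$ on the blocks, strengthen the recursion so that $D_j$ dominates the full Hilbert distortion $N_i^{1/2-1/p_i}$ of the earlier spaces $\ell_{p_i}^{N_i}$, and rerun \Cref{thm:separated} and \Cref{prop:dual-obstruction}.

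The step you flag as the main obstacle is settled as you hoped. Stern's estimate applies to any idempotent other than $0$ and $I$, without orthogonality, and gives $\norm{I-R_n}\leq 2^{1-2/p_n}\norm{R_n}\to1$.

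The only difference is cosmetic. You transport block $j$ alone by $B_j$. The paper instead passes once and for all to the abstract sum with summands $E_j$ and $\ker R_{n_j}$, via an isomorphism of distortion at most $4$; this is why its $D_j$ also contains the terms $\alpha_{p_i}^{n_i}$.
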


\begin{proof}
For each $n\geq1$, let $R_n=A_nB_n$ be the projection on the sampling space $U_n$ from \Cref{sec:embedding}. We have $\dim U_n=(1000n^2)^n$ and $\rank R_n=2^n$. Thus both $R_n$ and $I_{U_n}-R_n$ are nonzero. Since $U_n=\ell_{p_n}^{(1000n^2)^n}$ with its normalised counting measure, Stern's projection estimate, together with his computation of the Banach--Mazur constant of $L_{p_n}$ spaces \cite[Sections~4 and~5]{Stern2015}, gives
\begin{equation*}
 1\leq\norm{I_{U_n}-R_n}\leq2^{1-2/p_n}\norm{R_n}
 \longrightarrow1,
\end{equation*}
where the convergence follows from $p_n\to2$ and \Cref{lem:sampling}. We may therefore restrict to indices for which $\max\{\norm{R_n},\norm{I_{U_n}-R_n}\}\leq1+\rho/2$ and $\max\{\norm{A_n},\norm{B_n}\}\leq2$.

We now choose $(n_j)_{j=1}^{\infty}$ as in \Cref{sec:infinite-sum}, with one modification. We write $E_j=E_{n_j}$, $V_j=U_{n_j}$ and $N_j=\dim V_j$, and retain the notation $p_j,q_j,\Lambda_j$ for the other selected quantities, for $j\geq1$. The Hilbert bound at the $j$th stage is taken to be
\begin{equation*}
 D_j=\max\Bigl(\{2\}\cup
       \{\alpha_{p_i}^{n_i},N_i^{1/2-1/p_i}:1\leq i<j\}\Bigr),
\end{equation*}
and we retain the conditions
\begin{equation}\label{eq:full-head-separation}
 \Lambda_j\geq D_j^8,\qquad
 p_j-2\leq\varepsilon(3q_i,2)\quad(1\leq i<j).
\end{equation}
Since $D_j$ depends only on the preceding choices, the recursion remains possible. The additional term accounts for the entire space $V_i$, whose Hilbert distortion is at most $N_i^{1/2-1/p_i}$ by the usual comparison of finite-dimensional $\ell_p$ norms.

We claim that, with this choice, every sum of the form
\begin{equation*}
 Z=\left(\bigoplus_{i=1}^{\infty}Z_i\right)_2,\qquad
 Z_i=E_i\ \text{or}\ Z_i\subseteq V_i\quad(i\geq1)
\end{equation*}
fails DPR-lust whenever $J=\{j\in\N:Z_j=E_j\}$ is infinite. To see this, fix $j\in J$ and consider a subspace of dimension at most $3q_j$ of $W_j=(\bigoplus_{\substack{i=1\\i\ne j}}^{\infty}Z_i)_2$. Its projection onto the earlier summands is $D_j$-Hilbertian by the definition of $D_j$, while its projection onto the later summands is $2$-Hilbertian by \Cref{lem:local-hilbert} and \eqref{eq:full-head-separation}. Taking the $\ell_2$-sum of these two Hilbert norms shows that the original subspace is $D_j$-Hilbertian. For every $j\in J$ and every closed subspace $F$ with $E_j\subseteq F\subseteq Z$, we again have $F=E_j\oplus_2(F\cap W_j)$ isometrically. The same local Hilbert bound therefore applies to $F\cap W_j$. The proof of \Cref{thm:separated}, with these constants, gives
\begin{equation*}
 \lambda_{\mathrm{DPR}}(Z;E_j)
 \geq\frac{\Lambda_j^{1/8}}5
 \longrightarrow\infty\qquad(j\in J,\ j\to\infty).
\end{equation*}
Thus every sum of this form fails DPR-lust and has no unconditional basis.

Let $F_j=\ker R_{n_j}$ for $j\geq1$. Define $Q_j\colon V_j\to V_j$ by $Q_j=R_{n_j}$ for odd $j$ and $Q_j=I_{V_j}-R_{n_j}$ for even $j$. The required ambient space and projection are
\begin{equation*}
 X=\left(\bigoplus_{j=1}^{\infty}V_j\right)_2,\qquad
 Q=\diag(Q_j:j\in\N).
\end{equation*}
Then $\norm Q,\norm{I_X-Q}\leq1+\rho/2<1+\rho$. $QX$ is isomorphic to the sum with $E_j$ at the odd indices and $F_j$ at the even indices; for $(I_X-Q)X$ the roles are reversed. Indeed, the operators $A_{n_j}$ on the $E_j$ summands, together with the identities on the $F_j$ summands, define these isomorphisms. Their inverses are given by the restrictions of $B_{n_j}$ and the same identities, so the maps and their inverses have norm at most two. Each of the two sums contains infinitely many of the summands $E_j$, and hence $QX$ and $(I_X-Q)X$ fail DPR-lust.

The quotient argument of \Cref{prop:dual-obstruction} also applies to each of these sums. For each $j\in J$, the sum of the earlier summands has Hilbert distortion at most $D_j$, and the later summands satisfy the same local estimates. Since $J$ is infinite for each sum, both dual spaces fail DPR-lust.

The space $X$ is a separable uniformly convex Banach lattice with its canonical $1$-unconditional basis. \Cref{lem:complemented-gl} and the duality identity used in \Cref{cor:dual-consequences} give, for $R\in\{Q,I_X-Q\}$,
\begin{equation*}
 \chi_{\mathrm{GL}}(RX)\leq\norm R<1+\rho,\qquad
 \chi_{\mathrm{GL}}((RX)^*)\leq\norm R<1+\rho.
\end{equation*}
Since all four spaces fail DPR-lust, none is isomorphic to a Banach lattice. This proves \ref{it:two-lust}.

For \ref{it:two-fdd}, observe that all four spaces are superreflexive and are, isometrically, $\ell_2$-sums of finite-dimensional spaces. Their block decompositions are therefore contractive and unconditional and yield the metric approximation property, as in \Cref{cor:local-structure}\ref{it:local-fdd} and \Cref{cor:dual-consequences}\ref{it:dual-fdd}.
\end{proof}
\section{Geometric properties of the examples}\label{sec:geometry}

We now record some geometric properties of the examples. We show that the spaces and their duals admit conditional asymptotically monotone bases, and conclude with two consequences of their $\ell_2$-sum structure.

\subsection{Conditional bases}\label{sec:schauder}

The spaces $Z_\rho$ of \Cref{thm:main} admit asymptotically monotone bases. In the real construction, the finite sampling projections have Fourier subspaces as both their ranges and their kernels, so we begin with an estimate for the basis constants of such spaces.

\begin{lemma}\label{lem:finite-fourier-bases}
There is an absolute constant $K\geq1$ such that the following holds. Let $n\geq1$, $M\geq3$ and $2\leq p\leq4$, and give $G=(\mathbb Z/M\mathbb Z)^n$ its uniform probability measure. Every real Fourier subspace of $L_p(G)$ whose frequency set is invariant under negation has an $L_2$-orthonormal basis with basis constant at most $K^{2-4/p}$.
\end{lemma}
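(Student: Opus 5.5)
The plan is to interpolate between $p=2$ and $p=4$: find a single ordering of an $L_2$-orthonormal real basis whose partial-sum projections are uniformly bounded on $L_4(G)$ by an absolute constant $K^2$. On $L_2(G)$ they are orthogonal projections and have norm one. Riesz--Thorin, applied to the real operators (or to their complexifications, which only changes $K$ by an absolute factor), then gives the bound $K^{2(1-\theta)}$ at exponent $p$, where $1/p=\theta/2+(1-\theta)/4$. Solving gives $1-\theta=2-4/p$, so the bound is $K^{2-4/p}$. Thus everything reduces to an absolute $L_4$ bound for the partial sums of a suitably ordered real trigonometric basis.

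For the basis, let $\Gamma\subseteq G$ be the frequency set, invariant under $\xi\mapsto-\xi$. I split $\Gamma$ into self-conjugate frequencies ($2\xi=0$) and pairs $\{\xi,-\xi\}$ with $\xi\neq-\xi$. For a self-conjugate $\xi$ I take the real character $\chi_\xi=\pm1$. For a pair I take $\sqrt2\cos(2\pi\langle\xi,\cdot\rangle/M)$ and $\sqrt2\sin(2\pi\langle\xi,\cdot\rangle/M)$. These functions form a real $L_2$-orthonormal basis of the Fourier subspace. I order them so that both members of each pair are consecutive, with the cosine first.

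A partial sum projection is then a Fourier multiplier onto a symmetric frequency set $\Gamma_0\cup(-\Gamma_0)$, plus possibly one extra rank-one projection $f\mapsto\langle f,\sqrt2\cos(\cdot)\rangle\sqrt2\cos(\cdot)$. That extra term has norm at most $\norm{\sqrt2\cos}_{4}\norm{\sqrt2\cos}_{4/3}\le\norm{\sqrt2\cos}_4^2\le 2$ on $L_4$, uniformly. So it remains to bound the symmetric Fourier truncations $S_{\Gamma_0}$ on $L_4(G)$, restricted to the given Fourier subspace.

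This is the main obstacle. Arbitrary Fourier truncations are not uniformly bounded on $L_4$; this is the Kakeya and ball-multiplier phenomenon. So the order of frequencies must be chosen cleverly, and a projection onto an arbitrary ordered initial segment will not do. My first attempt would be a lexicographic order on $G=(\mathbb Z/M\mathbb Z)^n$, representing each coordinate in a centred window $(-M/2,M/2]$ and listing frequencies by increasing $|\xi_1|$, then $|\xi_2|$, and so on. An initial segment is then a product of centred one-dimensional intervals with one coordinate a union of two symmetric intervals. Its multiplier factors as a tensor product of one-dimensional truncations, but only in a single coordinate is it nontrivial beyond full or identity in others; precisely, it is a composition of at most two one-variable operators (a symmetric interval projection in one coordinate times conditioning on earlier coordinates being fixed). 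A one-variable symmetric interval projection on $\mathbb Z/M\mathbb Z$ is bounded on $L_4$ by an absolute constant via the discrete Riesz projection (M.~Riesz/Hilbert transform on cyclic groups, constant independent of $M$). Applying it in one variable with Fubini, together with the fact that restriction to $\Gamma$ commutes with these multipliers, gives an absolute $L_4$ bound $K^2$. I expect getting the ordering exactly right, so that every initial segment is such a "one-coordinate-at-a-time" set, to be the delicate point.
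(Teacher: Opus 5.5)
\textbf{There is a genuine gap, at the one step that carries the whole lemma.} Everything before it is sound and agrees with the paper: the real cosine/sine basis with each pair kept consecutive, the rank-one correction of $L_4$ norm at most $2$, and interpolation between $L_2$ and $L_4$. What fails is the uniform $L_4$ bound for the symmetric truncations.

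\textbf{Your description of the initial segments is false.} For the order by $(|\xi_1|,\ldots,|\xi_n|)$, an initial segment is not a product set. It is a staircase,
\begin{equation*}
 \bigcup_{k=1}^{n}\{\xi:|\xi_i|=a_i\ (i<k),\ |\xi_k|<a_k\},
\end{equation*}
together with a partial set of sign patterns at the last level. Its multiplier is therefore a sum of up to $n$ tensor products, not a composition of two one-variable operators.

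Moreover, ``fixing $|\xi_i|=a_i$'' is not a conditioning. It is the projection onto $\operatorname{span}\{\chi_{a_i},\chi_{-a_i}\}$ in that variable, a first-harmonic-type projection whose $L_4$ norm generally exceeds one. So bounding the $k$th piece on its own costs a factor that can grow geometrically in $k$.

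\textbf{No cleverer ordering rescues the plan.} A composition of one-variable multipliers is the multiplier of a product set. Its cardinality is a product of $n$ integers at most $M$, so a proper product set has at most $(M-1)M^{n-1}$ elements. Successive initial segments grow by at most two elements, so for $n\geq2$ and $\Gamma=G$ some initial segment has size strictly between $(M-1)M^{n-1}$ and $M^n$. That segment cannot be a product set.

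\textbf{The missing idea} is an estimate that exploits orthogonality between the $n$ pieces, i.e.\ a martingale square-function argument, uniform in both $n$ and $M$. The paper gets it as follows.
\begin{itemize}
\item It orders frequencies lexicographically in $\{0,\ldots,M-1\}^n$ without centring. The more significant coordinates are then pinned to single characters, which act as modulated conditional expectations of norm one.
\item After reversing the coordinates, the initial projections $F_m$ are Vilenkin partial-sum operators. Young's theorem gives $\norm{F_m}_{4\to4}\leq C_4$ with $C_4$ absolute, even for unbounded orders.
\item Symmetry is restored afterwards rather than built into the order. With $Cf(r)=f(-r)$, the operator $S_m=F_m+CF_mC-F_mCF_mC$ is the Fourier projection onto $K_m\cup(-K_m)$, with $L_4$ norm at most $2C_4+C_4^2$.
\item Ordering the orbits $\{\nu,-\nu\}$ by first lexicographic occurrence makes every partial sum that ends after a complete orbit equal to some $S_m$.
\end{itemize}

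In the paper's order, summing the pieces by the triangle inequality would give only an $O(n)$ bound. That is weaker than the absolute constant the lemma asserts. It would still suffice for the application, since $(cn)^{2-4/p_n}\to1$, but your geometric bound would not even suffice there.
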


\begin{proof}
Order the characters
\begin{equation*}
 \chi_\nu(r)=\exp\left(\frac{2\pi i}{M}
                  \sum_{s=1}^n\nu_sr_s\right),
 \qquad \nu,r\in\{0,\ldots,M-1\}^n,
\end{equation*}
lexicographically by frequency. For $0\leq m\leq M^n$, let $K_m$ be the set of the first $m$ frequencies and $F_m$ the corresponding Fourier projection. Reversing the coordinate order identifies these with the Vilenkin partial-sum projections on $(\mathbb Z/M\mathbb Z)^{\N}$ restricted to functions of the first $n$ coordinates. Young's uniform $L_p$ estimate \cite{Young1976}, recalled in \cite[(1.2)]{Young1991}, therefore gives $\norm{F_m}_{4\to4}\leq C_4$ for an absolute constant $C_4\geq1$.

\Needspace{8\baselineskip}
To obtain a real basis, we group frequencies with their negatives, since $\overline{\chi_\nu}=\chi_{-\nu}$. Let $Cf(r)=f(-r)$. This is an isometric involution satisfying $C\chi_\nu=\chi_{-\nu}$, so $CF_mC$ is the Fourier projection onto $-K_m$. Since Fourier projections commute,
\begin{equation*}
 S_m=F_m+CF_mC-F_mCF_mC
\end{equation*}
is the Fourier projection onto $K_m\cup(-K_m)$: the last term subtracts the projection onto the intersection. This frequency set is invariant under negation, so $S_m$ preserves real-valued functions. Moreover, $S_m$ is orthogonal on $L_2$ and has $L_4$ norm at most $2C_4+C_4^2$.

Order the orbits $\{\nu,-\nu\}$ by their first lexicographic occurrence. For a two-element orbit take $\sqrt2\operatorname{Re}\chi_\nu$ followed by $\sqrt2\operatorname{Im}\chi_\nu$; for a one-element orbit take the real character $\chi_\nu$. These vectors form an orthonormal basis of the real space $L_2(G)$. For each two-element orbit, the complex span of these two real vectors is $\operatorname{span}\{\chi_\nu,\chi_{-\nu}\}$. Thus an initial projection ending after a complete orbit is $S_m$, where $m$ is the position at which that orbit first appears.

It remains to consider an initial projection ending after only the real part of a two-element orbit. If that orbit first appears at position $m$, this projection is $S_{m-1}+P_u$, where $P_u$ is the orthogonal projection onto $u=\sqrt2\operatorname{Re}\chi_\nu$. Since $P_uf=(\int_G f\overline u)\,u$ and $|u|\leq\sqrt2$, H\"older's inequality gives
\begin{equation*}
 \norm{P_u}_{4\to4}\leq\norm u_4\norm u_{4/3}\leq2.
\end{equation*}
Consequently, every initial projection has $L_2$ norm at most one and $L_4$ norm at most $K=2C_4+C_4^2+2$. These estimates hold on the full complex scalar spaces, so interpolation between $L_2$ and $L_4$, followed by restriction to real functions, gives the bound $K^{2-4/p}$.

A frequency set invariant under negation selects a subsequence of this real basis. Since the omitted coefficients vanish on the corresponding subspace, its initial projections are restrictions of those of the full basis and satisfy the same bound.
\end{proof}

In \Cref{sec:embedding}, the projection $R_n=A_nB_n$ on $U_n$ has Fourier support $\{-1,1\}^n$, since its matrix kernel, for $r,t\in\{0,\ldots,M-1\}^n$, is
\begin{equation*}
 \left(\frac2M\right)^n
       \prod_{s=1}^n\cos\left(\frac{2\pi(r_s-t_s)}M\right)
 =\frac1{M^n}\sum_{\varepsilon\in\{-1,1\}^n}
       \exp\left(\frac{2\pi i}{M}
                    \sum_{s=1}^n\varepsilon_s(r_s-t_s)\right).
\end{equation*}
Both this set and its complement are invariant under negation. Applying \Cref{lem:finite-fourier-bases} with $M=1000n^2$ and $p_n=2+n^{-3/4}$, we obtain bases of $R_nU_n$ and $\ker R_n$ with constants
\begin{equation}\label{eq:sampled-basis-constants}
 \beta_n=K^{2-4/p_n}\longrightarrow1.
\end{equation}
For these choices of $M$ and $p_n$, bases with constants tending to one can also be obtained from the Lebesgue-constant estimates of Blahota, Persson and Tephnadze \cite[Theorem~1]{BlahotaPerssonTephnadze2018}, using the same pairing argument and interpolation between $L_2$ and $L_\infty$.

An analogous ordering is available for the complex sphere spaces of \Cref{sec:tensors}. For $t=(t_s)_{s=1}^n\in\Omega^n$, write $w_{s,k}(t)=w_k(t_s)$ for $1\leq s\leq n$ and $k\in\{1,2\}$. Consider the orthonormal monomials
\begin{equation*}
 h_\iota=\prod_{s=1}^n\overline{w_{s,\iota_s}},
 \qquad \iota\in\{1,2\}^n.
\end{equation*}
The measure-preserving circle action $w_{s,k}\mapsto e^{-ik3^{s-1}t}w_{s,k}$, for $1\leq s\leq n$ and $k\in\{1,2\}$, assigns these monomials the distinct positive frequencies $\sum_{s=1}^n\iota_s3^{s-1}<3^n$. Ordering them by frequency makes each initial projection a restriction of a spectral interval projection for this action. Such a projection has norm at most one on the full scalar $L_2$ space and at most $1+n\log3$ on $L_\infty$. Indeed, it is given by integration against a Dirichlet kernel, whose normalised $L_1$ norm for an interval of $L$ frequencies is at most $1+\log L$. The latter estimate follows by integrating $\min\{L,\pi/|t|\}$ over $[-\pi,\pi]$. Interpolation therefore gives basis constants $(1+n\log3)^{1-2/p_n}\to1$. Under the finite embeddings, these basis constants increase by a factor of at most $c_{p_j}^{n_j}/(1-\delta_j)$ for $j\geq1$, which tends to one with the choice $(\delta_j)_{j=1}^{\infty}\to0$ made in \Cref{sec:embedding}.

We now pass from the finite-dimensional bases to asymptotically monotone bases of the examples and their duals.

\begin{corollary}\label{cor:schauder-bases}
The following statements hold for $Z_\rho$ of \Cref{thm:main} and its dual over either scalar field. Over $\R$, they also hold for $QX$ and $(I_X-Q)X$ of \Cref{cor:two-nonlattice-ranges} and their duals.
\begin{resultparts}
\item\label{it:schauder-existence}
Each space admits a conditional basis which is shrinking and boundedly complete.
\item\label{it:schauder-tails}
These bases can be chosen to be asymptotically monotone.
\end{resultparts}
\end{corollary}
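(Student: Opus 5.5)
Each of the spaces in question is, isometrically, an $\ell_2$-sum $(\bigoplus_{j}G_j)_2$ of finite-dimensional spaces $G_j$. Over $\R$ the summands are $G_j=P_j(\ell_{p_j}^{N_j})$, or its kernel in the alternating construction of \Cref{cor:two-nonlattice-ranges}. In the duals the summands are the duals of these. The plan is to concatenate finite bases of the blocks in block order.

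First, I would fix for each $j$ a basis $(e^{(j)}_i)_i$ of $G_j$ with basis constant $\beta'_j\to1$. Over $\R$, the ranges and kernels of $R_n$ are Fourier subspaces of $L_{p_n}((\mathbb Z/M\mathbb Z)^n)$ with negation-invariant frequency sets. Hence \Cref{lem:finite-fourier-bases} and \eqref{eq:sampled-basis-constants} give constants $\beta_{n_j}\to1$; rescaling the coordinates does not affect this. Over $\C$, I would use the frequency-ordered monomial basis of $E_{n_j}$ described above. Transporting it through $A_j$, with inverse $B_j$ on the range, gives basis constant at most $(1+n_j\log3)^{1-2/p_j}\,c_{p_j}^{n_j}/(1-\delta_j)\to1$. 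For the duals, I would take the biorthogonal functionals in $G_j^*$. Their initial projections are adjoints of the tail projections $I-S_m$, so their norms are at most $1+\beta'_j$. That bound is not close to one, so I would instead order the dual basis in reverse. Then its initial projections are adjoints of the initial projections of the original basis, giving constant $\beta'_j$ again.

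Second, I would concatenate these bases. An initial projection of the concatenation acts as the identity on blocks $1,\dots,j-1$, as $S^{(j)}_m$ on block $j$, and as zero afterwards. Since the sum is an $\ell_2$-sum, its norm is $\max\{1,\norm{S^{(j)}_m}\}\le\max\{1,\beta'_j\}$. Thus the basis constant is $\sup_j\beta'_j<\infty$, and the norms of the initial projections ending in block $j$ tend to $1$. This gives asymptotic monotonicity, proving \ref{it:schauder-tails}. Because each space is reflexive (indeed superreflexive), every basis is automatically shrinking and boundedly complete, by James's theorem.

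Third, conditionality. Each space fails to admit any unconditional basis, by \Cref{cor:local-structure}, \Cref{cor:dual-consequences} and \Cref{cor:two-nonlattice-ranges}, so any basis of it is conditional. The main technical point is the uniform control of the finite basis constants, especially in the complex case and for the duals under the embedding distortion. This is handled by the estimates already recorded together with the reversal trick. I would check carefully that reversing the dual basis within each block still yields a legitimate Schauder basis of the whole sum; it does, since the reversal is confined to each finite block.
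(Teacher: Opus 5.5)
Your overall route is the paper's: concatenate block bases whose constants tend to one, compute the norm of the block-diagonal initial projections in the $\ell_2$-sum, use reflexivity to get shrinking and boundedly complete, and deduce conditionality from the absence of unconditional bases. The genuine error is in the dual part of (ii), and the reversal you introduce to repair it is what actually breaks the argument.

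You have the identification of the dual initial projections backwards. Let $(e_i)_{i=1}^d$ be a basis of $G_j$ with initial projections $S_m=\sum_{i\le m}e_i\otimes e_i^*$. The biorthogonal family $(e_i^*)_{i=1}^d$, taken in the \emph{same} order, has initial projections $f\mapsto\sum_{i\le m}f(e_i)e_i^*=f\circ S_m$. These are exactly $S_m^*$, with norm $\norm{S_m}\le\beta'_j$. It is the \emph{reversed} order $e_d^*,\dots,e_1^*$ whose initial projections are $(I-S_{d-m})^*$, the adjoints of tail projections. So the reversed dual basis you propose only has initial projection norms bounded by $1+\beta'_j$ under your argument. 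That still proves (i), but it does not prove asymptotic monotonicity of the dual bases in (ii).

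The fix is to drop the reversal. The biorthogonal bases, concatenated in the original block order, have the same initial projection norms as the original bases, on the whole space and on tails. This is precisely what the paper uses.

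Alternatively, you could keep the reversal and prove separately that the tail projections have norms tending to one. That does hold here, because the tails are again $L_2$-orthogonal Fourier projections (respectively spectral-interval projections transported by $A_j,B_j$). The same $L_2$--$L_4$ (respectively $L_2$--$L_\infty$) interpolation therefore applies. But you do not supply this, and it does not follow from $\beta'_j\to1$ alone.

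A minor, harmless difference concerns conditionality of the dual bases. You cite the failure of unconditional bases for the duals directly. The paper instead argues by reflexivity that an unconditional basis of a dual would yield one of the original space. Both routes work.
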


\begin{proof}
For \ref{it:schauder-existence}, let $Z$ denote $Z_\rho$, $QX$ or $(I_X-Q)X$, and write $Z=(\bigoplus_{j=1}^{\infty}Z_j)_2$ in its inherited norm. Concatenate the finite bases constructed above. An initial projection acts as the identity on preceding blocks, an initial coefficient projection on the current block, and zero on later blocks. Its norm is therefore bounded by the supremum of the finite basis constants. Reflexivity ensures that these bases are shrinking and boundedly complete \cite[Theorem~1.b.5]{LindenstraussTzafriri1977}. The biorthogonal sequences are bases of the duals, which are also reflexive. An unconditional basis of a dual would, by reflexivity, give an unconditional basis of $Z$, contrary to its construction. All these bases are therefore conditional.

For \ref{it:schauder-tails}, an initial projection has norm at most the basis constant of the block in which it ends. Since these constants tend to one and every block is finite-dimensional, the concatenated basis is asymptotically monotone. The same estimate on $(\bigoplus_{j=r}^{\infty}Z_j)_2$ gives basis constants tending to one as $r\to\infty$. The biorthogonal bases have the same initial projection norms, both on the whole space and on the corresponding dual tails, so these conclusions also hold for the duals.
\end{proof}

Given $\varepsilon>0$, starting the recursive choice sufficiently far out and, over $\C$, taking the embedding errors $(\delta_j)_{j=1}^{\infty}$ sufficiently small makes all these basis constants less than $1+\varepsilon$, simultaneously for these spaces and their duals. Here the examples are chosen depending on both $\rho$ and $\varepsilon$.
\Needspace{6\baselineskip}
\begin{remark}\label{rem:geometric-properties}
The $\ell_2$-sum structure of $Z_\rho$, $QX$, $(I_X-Q)X$ and their duals ensures that every infinite-dimensional closed subspace contains a copy of $\ell_2$ complemented in the whole space; see \cite[proof of Proposition~2.1(b) and Proposition~2.4]{OikhbergSpinu2015}.

The real spaces $Z_\rho$, $QX$ and $(I_X-Q)X$ also have type $2$ and cotype $s$ for every $s>2$, by the standard $L_p$ estimates and $p_j\to2$. They fail cotype $2$ by Kwapie\'n's theorem \cite{Kwapien1972}, since none is isomorphic to a Hilbert space.
\end{remark}

\begin{AIstatement}
OpenAI's ChatGPT 5.6 Sol and 6.0 Astra were used during the development of this work.
\end{AIstatement}

\begin{acknowledgements}
The author acknowledges with thanks funding from the EPSRC (grant number EP/W524438/1) that has supported his PhD studies.
\end{acknowledgements}


\begin{thebibliography}{99}
\setlength{\baselineskip}{0.95\baselineskip}

\bibitem{Acuaviva2026}
A.~Acuaviva, \emph{The class of Banach lattices is not primary}, Forum Math. Sigma \textbf{14} (2026), Paper No.~e41, 18~pp. \url{https://doi.org/10.1017/fms.2026.10181}.

\bibitem{AubrunMullerHermes2026}
G.~Aubrun and A.~M\"uller-Hermes, \emph{Asymptotic tensor powers of Banach spaces}, Ann. Inst. Fourier (Grenoble) \textbf{76} (2026), no.~3, 1341--1368. \url{https://doi.org/10.5802/aif.3736}.

\bibitem{BenyaminiFlinnLewis1984}
Y.~Benyamini, P.~Flinn, and D.~R. Lewis, \emph{A space without $1$-unconditional basis which is $1$-complemented in a space with a $1$-unconditional basis}, in \emph{Texas Functional Analysis Seminar 1983--1984 (Austin, Tex.)}, Longhorn Notes, Univ. Texas Press, Austin, TX, 1984, 145--149.

\bibitem{BlahotaPerssonTephnadze2018}
I.~Blahota, L.-E. Persson, and G.~Tephnadze, \emph{Two-sided estimates of the Lebesgue constants with respect to Vilenkin systems and applications}, Glasg. Math. J. \textbf{60} (2018), no.~1, 17--34. \url{https://doi.org/10.1017/S0017089516000549}.

\bibitem{CasazzaKalton1996}
P.~G. Casazza and N.~J. Kalton, \emph{Unconditional bases and unconditional finite-dimensional decompositions in Banach spaces}, Israel J. Math. \textbf{95} (1996), 349--373. \url{https://doi.org/10.1007/BF02761046}.

\bibitem{Clarkson1936}
J.~A. Clarkson, \emph{Uniformly convex spaces}, Trans. Amer. Math. Soc. \textbf{40} (1936), no.~3, 396--414. \url{https://doi.org/10.1090/S0002-9947-1936-1501880-4}.

\bibitem{Clarkson1937}
J.~A. Clarkson, \emph{The von Neumann--Jordan constant for the Lebesgue spaces}, Ann. of Math. (2) \textbf{38} (1937), no.~1, 114--115. \url{https://doi.org/10.2307/1968512}.

\bibitem{DeHeviaEtAl2025}
D.~de Hevia, G.~Mart\'inez-Cervantes, A.~Salguero-Alarc\'on, and P.~Tradacete, \emph{A negative solution to the complemented subspace problem for Banach lattices}, J. Eur. Math. Soc. (JEMS), to appear; arXiv:2310.02196v2. \url{https://arxiv.org/abs/2310.02196v2}.

\bibitem{DeHeviaTradacete2025}
D.~de Hevia and P.~Tradacete, \emph{Complemented subspaces of Banach lattices}, Banach J. Math. Anal. \textbf{19} (2025), no.~4, Paper No.~60, 33~pp. \url{https://doi.org/10.1007/s43037-025-00447-0}.

\bibitem{DubinskyPelczynskiRosenthal1972}
E.~Dubinsky, A.~Pe\l czy\'nski, and H.~P. Rosenthal, \emph{On Banach spaces $X$ for which $\Pi_2(\mathcal L_\infty,X)=B(\mathcal L_\infty,X)$}, Studia Math. \textbf{44} (1972), no.~6, 617--648. \url{https://doi.org/10.4064/sm-44-6-617-648}.

\bibitem{EdelsteinWojtaszczyk1976}
I.~S. Edelstein and P.~Wojtaszczyk, \emph{On projections and unconditional bases in direct sums of Banach spaces}, Studia Math. \textbf{56} (1976), no.~3, 263--276. \url{https://doi.org/10.4064/sm-56-3-263-276}.

\bibitem{Figiel1972}
T.~Figiel, \emph{An example of infinite dimensional reflexive Banach space non-isomorphic to its Cartesian square}, Studia Math. \textbf{42} (1972), no.~3, 295--306. \url{https://doi.org/10.4064/sm-42-3-295-306}.

\bibitem{FigielJohnsonTzafriri1975}
T.~Figiel, W.~B. Johnson, and L.~Tzafriri, \emph{On Banach lattices and spaces having local unconditional structure, with applications to Lorentz function spaces}, J. Approx. Theory \textbf{13} (1975), no.~4, 395--412. \url{https://doi.org/10.1016/0021-9045(75)90023-4}.

\bibitem{GordonLewis1974}
Y.~Gordon and D.~R. Lewis, \emph{Absolutely summing operators and local unconditional structures}, Acta Math. \textbf{133} (1974), 27--48. \url{https://doi.org/10.1007/BF02392140}.

\bibitem{JohnsonLindenstraussSchechtman1980}
W.~B. Johnson, J.~Lindenstrauss, and G.~Schechtman, \emph{On the relation between several notions of unconditional structure}, Israel J. Math. \textbf{37} (1980), 120--129. \url{https://doi.org/10.1007/BF02762873}.

\bibitem{JohnsonSchechtman2001}
W.~B. Johnson and G.~Schechtman, \emph{Finite dimensional subspaces of $L_p$}, in \emph{Handbook of the Geometry of Banach Spaces}, Vol.~1 (W.~B. Johnson and J.~Lindenstrauss, eds.), North-Holland, Amsterdam, 2001, 837--870. \url{https://doi.org/10.1016/S1874-5849(01)80021-8}.

\bibitem{KaltonPeck1979}
N.~J. Kalton and N.~T. Peck, \emph{Twisted sums of sequence spaces and the three space problem}, Trans. Amer. Math. Soc. \textbf{255} (1979), 1--30. \url{https://doi.org/10.1090/S0002-9947-1979-0542869-X}.

\bibitem{KaltonWood1976}
N.~J. Kalton and G.~V. Wood, \emph{Orthonormal systems in Banach spaces and their applications}, Math. Proc. Cambridge Philos. Soc. \textbf{79} (1976), no.~3, 493--510. \url{https://doi.org/10.1017/S0305004100052506}.

\bibitem{King2003}
C.~King, \emph{Maximal $p$-norms of entanglement breaking channels}, Quantum Inf. Comput. \textbf{3} (2003), no.~2, 186--190. \url{https://doi.org/10.26421/QIC3.2-9}.

\bibitem{Kwapien1972}
S.~Kwapie\'n, \emph{Isomorphic characterizations of inner product spaces by orthogonal series with vector valued coefficients}, Studia Math. \textbf{44} (1972), no.~6, 583--595. \url{https://doi.org/10.4064/sm-44-6-583-595}.

\bibitem{Lacey1977}
H.~E. Lacey, \emph{Local unconditional structure in Banach spaces}, in \emph{Banach Spaces of Analytic Functions} (J.~Baker, C.~Cleaver, and J.~Diestel, eds.), Lecture Notes in Math., vol.~604, Springer, Berlin, 1977, 44--56. \url{https://doi.org/10.1007/BFb0069205}.

\bibitem{Lewis1979}
D.~R. Lewis, \emph{Ellipsoids defined by Banach ideal norms}, Mathematika \textbf{26} (1979), no.~1, 18--29. \url{https://doi.org/10.1112/S0025579300009566}.

\bibitem{LindenstraussTzafriri1977}
J.~Lindenstrauss and L.~Tzafriri, \emph{Classical Banach spaces I. Sequence spaces}, Ergebnisse der Mathematik und ihrer Grenzgebiete, vol.~92, Springer-Verlag, Berlin--New York, 1977.

\bibitem{MujicaVieira2010}
J.~Mujica and D.~M. Vieira, \emph{Schauder bases and the bounded approximation property in separable Banach spaces}, Studia Math. \textbf{196} (2010), no.~1, 1--12. \url{https://doi.org/10.4064/sm196-1-1}.

\bibitem{OikhbergSpinu2015}
T.~Oikhberg and E.~Spinu, \emph{Subprojective Banach spaces}, J. Math. Anal. Appl. \textbf{424} (2015), no.~1, 613--635. \url{https://doi.org/10.1016/j.jmaa.2014.11.008}.

\bibitem{Passer2015}
B.~Passer, \emph{An approximate version of the Jordan von Neumann theorem for finite-dimensional real normed spaces}, Linear Multilinear Algebra \textbf{63} (2015), no.~1, 68--77. \url{https://doi.org/10.1080/03081087.2013.844234}.

\bibitem{Pelczynski1960}
A.~Pe\l czy\'nski, \emph{Projections in certain Banach spaces}, Studia Math. \textbf{19} (1960), no.~2, 209--228. \url{https://doi.org/10.4064/sm-19-2-209-228}.

\bibitem{Pisier1978}
G.~Pisier, \emph{Some results on Banach spaces without local unconditional structure}, Compositio Math. \textbf{37} (1978), no.~1, 3--19. \url{https://www.numdam.org/item/CM_1978__37_1_3_0/}.

\bibitem{PlebanekSalguero2023}
G.~Plebanek and A.~Salguero-Alarc\'on, \emph{The complemented subspace problem for $C(K)$-spaces: A counterexample}, Adv. Math. \textbf{426} (2023), Paper No.~109103, 20~pp. \url{https://doi.org/10.1016/j.aim.2023.109103}.

\bibitem{Randrianantoanina2001}
B.~Randrianantoanina, \emph{Norm-one projections in Banach spaces}, Taiwanese J. Math. \textbf{5} (2001), no.~1, 35--95. \url{https://doi.org/10.11650/twjm/1500574888}.

\bibitem{Stern2015}
A.~Stern, \emph{Banach space projections and Petrov--Galerkin estimates}, Numer. Math. \textbf{130} (2015), no.~1, 125--133. \url{https://doi.org/10.1007/s00211-014-0658-5}.

\bibitem{Wojtaszczyk1978}
P.~Wojtaszczyk, \emph{On projections and unconditional bases in direct sums of Banach spaces. II}, Studia Math. \textbf{62} (1978), no.~2, 193--201. \url{https://doi.org/10.4064/sm-62-2-193-201}.

\bibitem{Young1976}
W.-S.~Young, \emph{Mean convergence of generalized Walsh-Fourier series}, Trans. Amer. Math. Soc. \textbf{218} (1976), 311--320. \url{https://doi.org/10.1090/S0002-9947-1976-0394022-8}.

\bibitem{Young1991}
W.-S.~Young, \emph{On an estimate of the partial sums of Vilenkin-Fourier}, Canad. Math. Bull. \textbf{34} (1991), no.~3, 426--432. \url{https://doi.org/10.4153/CMB-1991-069-x}.

\end{thebibliography}
\end{document}